\def\notshow#1\notshowend{} %
\newcommand{\C}{\mathcal{C}}
\newcommand{\s}{S_0}
\newcommand{\df}{\mathrm{d}}
\def\bb#1\eb{\textcolor{blue}{#1}} 
\def\br#1\er{\textcolor{red}{#1}} %
\newcommand{\R}{\mathds R}
\newcommand{\N}{\mathds N}
\newcommand{\n}{{n_0}}
\newcommand{\m}{{n_1}}
\newcommand{\B}{{B_0}}
\newcommand{\av}{{\mathfrak{a}}}
	\newcommand{\bv}{{\mathfrak{b}}}
\newcommand{\F}{{\tilde{F}}}
\newcommand{\hF}{{F}}
\newcommand{\tF}{\tilde{F}}
\newcommand{\g}{g_R} 
\newcommand{\cF}{{\hat{F}}}
\newcommand{\hess}{\hbox{Hess}\, }
\newtheorem{thm}{Theorem}[section]
\newtheorem{prop}[thm]{Proposition}
\newtheorem{lemma}[thm]{Lemma}
\newtheorem{cor}[thm]{Corollary}
\theoremstyle{definition}
\newtheorem{defi}[thm]{Definition}
\newtheorem{exe}[thm]{Example}
\newtheorem{rem}[thm]{Remark}
\title[Cones and Finsler spacetimes]{On the definition and examples of \\ cones and Finsler spacetimes  
}
\author[M. A. Javaloyes]{Miguel Angel Javaloyes}
\address{Departamento de 
Matem\'aticas, \hfill\break\indent
Universidad de Murcia, \hfill\break\indent
Campus de Espinardo,\hfill\break\indent
30100 Espinardo, Murcia, Spain}
\email{majava@um.es}
\author[M. S\'anchez]{Miguel S\'anchez}
\address{Departamento de Geometr\'{\i}a y Topolog\'{\i}a, Facultad de Ciencias, \hfill\break\indent
 Universidad de Granada,\hfill\break\indent
 Campus Fuentenueva s/n,
 \hfill\break\indent 18071 Granada, Spain}
\email{sanchezm@ugr.es}
\begin{document}
\begin{abstract}
 A systematic study of (smooth, strong) cone structures $\C$  and   Lorentz-Finsler  metrics $L$ is carried out.  As a  link between  both notions, cone triples $(\Omega,T, F)$, where $\Omega$ (resp. $T$) is a 1-form (resp. vector field) with $\Omega(T)\equiv 1$ and $F$, a Finsler metric on $\ker (\Omega)$,  are introduced.  
Explicit descriptions of all the Finsler spacetimes are given, paying special attention to stationary and static ones, as well as to issues related to differentiability.

 In particular,  cone structures $\C$ are bijectively associated  with  classes of anisotropically conformal metrics $L$, and the notion of {\em cone geodesic} is introduced consistently with both structures.     
As a non-relativistic application, the {\em time-dependent} Zermelo navigation problem is posed rigorously, and its general solution is provided. 

\vspace{10mm}

MSC: 53C50, 53C60, 70B05, 83D05
\end{abstract}
\maketitle

\tableofcontents

\section{Introduction}

The definition of Finsler spacetimes has been somewhat uncertain from the very beginning.  There are several issues that make it difficult: 
\begin{enumerate}[(i)]
\item  the generality inherent to  Finsler metrics, as one has a different (non-definite) scalar product for every direction in every tangent space,  
\item  the possible non-reversibility of the metric makes the distinction   between future and past harder,
\item there are many examples with smoothness issues, or having Lorentzian index only in some directions,
\item   there are Lorentz-Finsler elements in  physical models (such as birefringence crystals) which, in principle, might be independent of cosmological interpretations but they might be included in a mathematical notion of Finslerian spacetime, see \cite[\S V(B)]{PW11}. 
\end{enumerate}
As a byproduct of such difficulties, there are many different definitions of Finsler spacetimes spread in literature \cite{AaJa16,Beem70,CaStan16,CaStan18,Kos11,LPH12,PBPSS17,PW11,Vacaru} as discussed in Appendix \ref{s_a1}.
In this paper, we will try to clarify these issues,  developing systematically  a simple  definition of Finsler spacetimes with  good mathematical properties  (which will be applicable to other definitions), providing different ways to construct them and characterizing all  possible examples\footnote{Physical motivations for this definition were discussed in the meeting on Lorentz-Finsler Geometry and Applications '19
\url{http://gigda.ugr.es/finslermeeting/} 
and will 
be the aim of a future work. }. 

  From a mathematical viewpoint,  following Beem's approach \cite{Beem70}, it is natural to consider a Lorentz-Finsler metric as a (two-homogeneous) pseudo-Finsler one $L$ with fundamental tensor of coindex 1 defined on all the tangent bundle. However, the possible existence of more than two cones at each tangent space or the natural non-reversibility of Finsler metrics may obscure the physical intuition of  Finsler spacetimes as  extensions of  (classical) relativistic spacetimes. 
   These issues underlie  the multiplicity of proposed alternatives. However, notice that  it is not  clear the role of $L$ away from the  future causal cone\footnote{Notice that, as emphasized by Ishikawa \cite[formulas (3.2), (3.3)]{Ishi81}, 
one should {\em not} use the fundamental tensor $g_l$ for $l$ spacelike even when one measures a spacelike separation. Instead,  $g_v(l,l)$ for a  lightlike (or eventually timelike) vector $v$ should be used.},  and its seems natural  to maintain the Lorentzian signature at least  in that cone.   So, we will focus on Lorentz-Finsler metrics defined on a cone domain, which will include just the (future) timelike and causal  directions.   Therefore, a first task will be to clarify the relation between Lorentz-Finsler metrics and {\em cone structures}, being the  latter  smooth distributions of {\em strong} cones. This is carried out by means of a self-contained development along the paper, which can be summarized as follows.

\begin{thm}[Equivalence cone structures/ classes of Lorentz-Finsler metrics] Let $\C$ be a cone structure on $M$ (defined as a hypersurface of $TM$, according to Def. \ref{d_conestructure}). Then:

\begin{enumerate}
\item  $\C$  yields a natural notion of causality and, then, of {\em cone geodesics} (i.e., locally horismotic curves, Def. \ref{def_cone_g}).
 
\item $\C$ becomes equivalent to a class of anisotropically conformal Lorentz-Finsler metrics $L$, in the following sense:

\begin{enumerate}
\item Any Lorentz-Finsler metric $L$ on $M$ (Def. \ref{finslerst}) is endowed with a natural cone structure $\C$  (Cor. \ref{c_3.6}).

\item Any cone structure $\C$ is compatible with some Lorentz-Finsler metric $L$,  i.e., $\C$ is the cone structure of  $L$  (Cor. \ref{t_PRINNCIPAL}). 

\item The cone structures associated with two Lorentz-Finsler metrics $L_1,L_2$ coincide if and only if $L_1$ and $L_2$ are anisotropically conformal,  i.e., $L_2=\mu L_1$ for a smooth function $\mu>0$ defined on all timelike and causal vectors (Th.  \ref{t_anisotropic}). 

\end{enumerate}
\item  The cone geodesics of $\C$ are equal to the lightlike pregeodesics for any Lorentz-Finsler metric $L$ compatible with $\C$  (Th. \ref{t_CONE}). 

Therefore, all the natural causality theory for $\C$ becomes equivalent to the causality theory of any compatible $L$ 
(\S  \ref{s_62}).  
\end{enumerate} 
\end{thm}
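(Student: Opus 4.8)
The plan is to regard the statement as the assembly of a single dictionary between the synthetic object $\C$ and the analytic object $L$, and to prove it in the order (2a)$\to$(2b)$\to$(2c)$\to$(3), treating (1) as essentially definitional. For (1), once $\C$ is given as a field of strong (sharp, convex, smooth, strongly convex) cones, the future causal/timelike/lightlike vectors, the relations $\le$ and $\ll$, and hence the horismos, are all determined; a cone geodesic is then just a locally horismotic curve, so nothing requires proof beyond internal consistency. For (2a), given $L$ on a cone domain $A$ I would set $\C_p:=\{v\in A_p\setminus 0 : L(v)=0\}$. Smoothness of $\C$ follows because $0$ is a regular value of $L$ along lightlike vectors: for such $v$ one has $dL_v=2\,g_v(v,\cdot)$, a nonzero covector since $g_v$ is nondegenerate even though $g_v(v,v)=L(v)=0$. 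Strong convexity of $\C$ is precisely the definiteness of the second fundamental form of $\{L=0\}$, which is the standard consequence of $g_v$ having Lorentzian signature (coindex $1$) when restricted to $T_v\C$.

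For the existence half (2b) I would use a cone triple $(\Omega,T,F)$. Choosing a $1$-form $\Omega$ positive on $\C$ and a vector field $T$ inside the cone with $\Omega(T)\equiv 1$, every $v$ in $A$ decomposes as $v=\tau\,T+w$ with $\tau=\Omega(v)$ and $w\in\ker\Omega$; the cone meets each slice $\{\Omega=1\}$ in the boundary of a convex body whose Minkowski functional is a Finsler metric $F$ on $\ker\Omega$. I would then try the candidate $L(v)=\tau^2-F(w)^2$, extended to be $2$-homogeneous, and the real work is to verify that its fundamental tensor $g_v$ has coindex $1$ on the cone: the $\partial_\tau^2$ block contributes a $+$, while strong convexity of the indicatrix of $F$ forces the complementary block to be negative definite, giving signature $(+,-,\dots,-)$. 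For uniqueness (2c), if $L_1,L_2$ share the zero set $\C$, a Hadamard-type division lemma (valid because $dL_1$ is transverse to $\{L_1=0\}$) yields $L_2=\mu L_1$ with $\mu$ smooth, and positive by matching signs on timelike vectors; since both $L_i$ are $2$-homogeneous, $\mu$ is $0$-homogeneous, hence genuinely anisotropic, a function on directions rather than on points.

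The heart, and the step I expect to be the main obstacle, is (3): identifying the order-theoretic cone geodesics with the differential-equation lightlike pregeodesics of any compatible $L$. I would first establish conformal invariance, namely that the unparametrized lightlike geodesics depend only on $\C$: rescaling $L\mapsto\mu L$ alters the geodesic spray on $\{L=0\}$ only by a term tangent to the flow, so the lightlike pregeodesics are unchanged, consistently with (2c) and justifying why ``pregeodesic'' is the correct notion. Then I would prove the two inclusions by Lorentzian-style causality arguments: a short lightlike geodesic is locally achronal, hence locally horismotic, so it is a cone geodesic; conversely, a locally horismotic curve failing the geodesic equation could be shortcut by a corner-rounding timelike variation joining nearby points chronologically, contradicting horismoticity. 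The delicate points are making the variational shortcut work with only a strong cone rather than a full background Lorentzian metric, and phrasing the achronality/maximality statements conformally so that they transfer across the whole anisotropically conformal class; this is where I expect to rely most on the fundamental-tensor estimates from (2a)--(2b).
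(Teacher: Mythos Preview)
Your overall architecture matches the paper's, and (1), (2a), (2c), and (3) are essentially correct and close to what the paper does. The genuine gap is in (2b). Your candidate $L(v)=\tau^2-F(w)^2$ is \emph{not} smooth on $\operatorname{span}(T)$: the Finsler metric $F$ on $\ker\Omega$ is only $C^1$ at the origin of each fibre (it is $C^2$ there if and only if $F$ is quadratic, i.e., comes from a Riemannian metric), so $L$ fails the smoothness requirement of Def.~\ref{finslerst} exactly along the timelike direction $T$. The ``real work'' you identify --- checking that $g_v$ has coindex $1$ --- is actually the easy part and goes through as you say (this is the paper's Prop.~\ref{l_G}); what you are missing is that the resulting $G$ is only a \emph{continuous} Lorentz--Finsler metric, not a smooth one.

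The paper fixes this with a smoothing procedure (Th.~\ref{t_principal}, Lemma~\ref{l_PRINC}): one perturbs the indicatrix $\Sigma=G^{-1}(1)$ in an arbitrarily small neighbourhood of $T$ so that it becomes smooth and strongly convex everywhere, while leaving it untouched near $\C$. The perturbation is done fibrewise by approximating the convex function $\tau_p(w)=\sqrt{1+F(w)^2}$ near $w=0$ by a genuinely smooth strongly convex function, and then gluing with a partition of unity on $M$. Only after this step does one obtain a bona fide Lorentz--Finsler metric $\tilde G$ with cone $\C$, which is what Cor.~\ref{t_PRINNCIPAL} asserts. Without this, your argument for (2b) produces only an $L$ that is smooth on $\bar A\setminus\operatorname{span}(T)$, which is not enough.
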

 Here,  the ways to prove some of the previous results have a big interest in their own right, as 
they allow us to control in a precise way cone structures and to smoothen Lorentz-Finsler metrics.  Indeed, the following results are also obtained. 

\begin{thm}[Specification of cone structures and smoothability of Lorentz- Finsler metrics] Let $M$ be a manifold. Then:
\begin{enumerate}
\item Any cone structure $\C$ on $M$ yields a (non-unique) cone triple $(\Omega, T, F)$ composed by a non-vanishing 1-form $\Omega$, a vector field $T$ such that $\Omega(T)\equiv 1$ and a classical Finsler metric $F$ on the fiber bundle $\ker \Omega$ (Th. \ref{p_transversality}).   

\item Any cone triple $(\Omega, T, F)$ yields both, a smooth cone structure $\C$ (Th. \ref{p_transversality}) and a continuous Lorentz-Finsler metric   (naturally  defined as  $G(\tau,v)$ $=\tau^2-F(v)$) which is non-smooth only at the direction spanned by $T$ (Prop. \ref{l_G}).

\item  Under general hypotheses, a non-smooth  Lorentz-Finsler metric  can be smoothen maintaing the same associated cone   (Th. \ref{t_principal}) .

 In particular, the continuous Lorentz-Finsler metric $G$  associated with any cone triple can be smoothen by perturbing $G$ in a small neighborhood of the non-smooth direction $T$. 
\end{enumerate}
\end{thm}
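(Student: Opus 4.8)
The plan is to establish the three assertions separately, each reducing to one of the results quoted in the statement; I organize the argument so that Part (1) manufactures the data $(\Omega,T,F)$ from a cone, Part (2) reverses the construction and reads off the regularity of $G$, and Part (3) removes the single bad direction by a homogeneous perturbation.

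For Part (1), I would start from a cone structure $\C$ and build the triple by three successive choices. First, pick a smooth \emph{timelike} vector field $T$, i.e.\ $T_p$ in the interior of the cone $C_p$ for every $p$; locally such a field exists trivially, and since the interior of each cone is convex a partition of unity glues the local choices into a global one (here I use that $M$, being a manifold, is paracompact). Second, because each $C_p$ is a \emph{strong} (in particular salient) cone it lies in an open half-space, so there is a covector positive on $C_p\setminus\{0\}$; imposing $\Omega_p(T_p)=1$ and averaging the local choices with a partition of unity over the (convex, affinely constrained) set of admissible covectors yields a smooth $1$-form $\Omega$ with $\Omega>0$ on $\C$ and $\Omega(T)\equiv 1$. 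Third, I define $F$ on $\ker\Omega$ by slicing: for $v\in\ker\Omega_p\setminus\{0\}$ let $F(v)$ be the unique positive number with $T_p+v/F(v)\in\C_p$, i.e.\ $F$ is the Minkowski functional of the bounded convex slice $C_p\cap\{\Omega_p=1\}$ recentred at $T_p$. Positive $1$-homogeneity is built in, while smoothness and strong convexity of $F$ are inherited from those of the cone; this is the content of Th.~\ref{p_transversality}.

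For Part (2), I would run the slicing backwards. Writing $w\in TM$ as $w=\tau T+v$ with $\tau=\Omega(w)$ and $v\in\ker\Omega$, the set $\C:=\{w:\tau=F(v),\ \tau>0\}$ is a hypersurface of $TM$ whose fibres are boundaries of strong cones, smoothness and strong convexity transferring back from $F$ (again Th.~\ref{p_transversality}). I then set $G(\tau,v)=\tau^2-F(v)^2$, which is $2$-homogeneous, positive on the interior of the cone, zero on $\C$, and of Lorentzian fundamental tensor there, hence a Lorentz--Finsler metric in the sense of Def.~\ref{finslerst} (Prop.~\ref{l_G}). The only regularity issue is the term $F(v)^2$: for a Finsler norm it is smooth on $\ker\Omega\setminus\{0\}$ but merely $C^1$ at $v=0$ unless $F$ is Euclidean. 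Since $v=0$ corresponds exactly to $w\in\R T$, this pins the non-smooth locus of $G$ to the single direction spanned by $T$.

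For Part (3), the key observation is that the cone $\C=\{G=0\}$ and the bad direction $\R T$ are disjoint after projectivising: on $\C$ one has $v\neq 0$ (indeed $F(v)=\tau>0$), whereas the bad locus is $v=0$. Hence there is a conic neighbourhood of $\R^+T$ meeting no lightlike vector. Using $2$-homogeneity I would write, for $\tau>0$, $G(\tau,v)=\tau^2\bigl(1-\psi(v/\tau)\bigr)$ with $\psi:=F^2$, and replace $\psi$ by a smooth function $\tilde\psi$ that coincides with $\psi$ outside a small ball $B_\varepsilon(0)\subset\ker\Omega$ and is smooth and strongly convex on all of $B_\varepsilon(0)$; reassembling $\tilde G(\tau,v)=\tau^2\bigl(1-\tilde\psi(v/\tau)\bigr)$ gives a smooth, $2$-homogeneous Lorentz--Finsler metric that agrees with $G$ away from $\R T$, hence on a neighbourhood of $\C$, leaving the cone untouched. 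The general mechanism behind this localised modification is Th.~\ref{t_principal}, of which $G$ is the model case. The main obstacle is precisely the construction of $\tilde\psi$: one must interpolate between $\psi$ and a smooth strongly convex model (e.g.\ the osculating paraboloid of $\psi$ along $\partial B_\varepsilon$) while keeping $\hess\tilde\psi$ positive definite throughout the transition annulus, since this is what guarantees that the fundamental tensor of $\tilde G$ retains Lorentzian signature (coindex $1$). Because a cutoff interpolation contributes cross terms in $\nabla\chi,\nabla^2\chi$ to the Hessian, the control of these terms --- achieved by taking the model $C^2$-close to $\psi$ on the annulus and the transition region thin --- is the technical heart of the argument.
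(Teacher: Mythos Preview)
Your proposal is correct and, for Parts~(1) and~(2), follows the paper's approach almost exactly: the paper constructs $\Omega$ first and then normalises a timelike field $T$ (Lemma~\ref{l_2.1}), whereas you build $T$ first and then $\Omega$, but the underlying convex-averaging via partitions of unity is the same, and the slicing definition of $F$ is precisely that of Th.~\ref{p_transversality}. One small wording issue in Part~(2): $G$ is \emph{not} a Lorentz--Finsler metric in the sense of Def.~\ref{finslerst} (which requires smoothness on all of $\bar A$); Prop.~\ref{l_G} only calls it a \emph{continuous} Lorentz--Finsler metric, which you implicitly acknowledge in the next sentence.

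For Part~(3) you take a genuine variant of the paper's route. The paper smooths the \emph{indicatrix} $\Sigma=G^{-1}(1)$, writing it as the graph $\tau=\sqrt{1+F(v)^2}$ over $\ker\Omega$ and applying the convex-smoothing Lemma~\ref{l_PRINC} to this graph function; the new metric is then the one whose indicatrix is the smoothed graph. You instead smooth $\psi=F^2$ directly on the affine slice $\{\Omega=1\}$ and reassemble $\tilde G(\tau,v)=\tau^2(1-\tilde\psi(v/\tau))$ by homogeneity. Both reductions land on the same technical core (smoothing a convex function that fails to be $C^2$ only at the origin, with Hessian bounded below nearby), and both preserve the cone because the modification is confined to directions with $F(v/\tau)<\varepsilon<1$, hence strictly timelike. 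Your version has the advantage of an explicit formula for $\tilde G$; the paper's has the advantage that the Lorentzian signature of the fundamental tensor is immediate from strong convexity of the indicatrix, whereas in your formulation one must still check (via a Schur-complement computation on the Hessian of $\tau^2(1-\tilde\psi(v/\tau))$) that strong convexity of $\tilde\psi$ forces index $n-1$---this is true, but it is an extra step you assert rather than verify.
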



Finally, we 
give an amount of examples of Finsler spacetimes. Indeed, we provide simple procedures of construction  by using
Riemannian and Finsler metrics, including a general construction of all Finsler spacetimes.  This can be summarized as follows. 


\bigskip \noindent  {\em Construction of new classes of (smooth) Lorentz-Finsler $L$.}  Let $M$ be a manifold and $\cF$, $g_R$ and $\omega$, respectively, a Finsler metric, a Riemmanian metric,  and a one-form on $M$.
\begin{enumerate}
\item {\em Basic examples}: $L(v)=\omega(v)^2-\cF(v)^2$ is a Lorentz-Finsler metric in the region $\bar A=\{v\in TM:\omega(v)\geq \cF(v)\}$, assuming that the indicatrix of $\hat F$ and $\omega^{-1}(1)$ intersect transversely (Th. \ref{t_examp}).
\item  {\em  Characterization in terms of   Riemannian and  Finsler metrics: }  $L(v)=g_R(v,v)-\cF(v)^2$ is a Lorentz-Finsler metric in the region $\bar A=\{v\in TM:g_R(v,v)\geq \cF(v)^2\}$, assuming that the relations \eqref{fundtensEx} and \eqref{fundtensEx2} below hold; moreover, any Lorentz-Finsler metric can be written in this way (Th. \ref{t_carac}).
\item {\em Stationary/static Finsler spacetimes:} natural examples can be constructed  in $\R\times S$ by considering metrics as above on a vector bundle over $S$;  a general local description  can also be obtained  (\S \ref{ex_stationary}).
\item {\em Smoothability:}   continuous Lorentz-Finsler metrics which appear naturally in $\R\times S$ when considering products or  generalizations of  static spacetimes can be smoothen preserving their static character  (part~(3) of Rem. \ref{r_principal}).

\end{enumerate}
 As a  further application of this study,  we suitably model and solve the problem of Zermelo navigation \cite{BCS04} when the  ``wind'' depends on time, that is,  when the  prescribed  (maximum)  velocities are given by the indicatrix of a Finsler metric which depends on  a parameter.  Indeed,  we define a cone triple  and take its  cone structure in such a way that   {\em the corresponding  cone geodesics provide the solutions of Zermelo problem }  (namely, the trajectories that minimize  the time under prescribed velocity in every direction).  Moreover, it is possible to ensure the existence of minimizers of the time under general conditions on the time-dependent Finsler metric  (Cor. \ref{c_Zermelo}).

 The article is organized as follows.  In Section \ref{s2}, cone structures are introduced. Even though some of the  issues  therein are somewhat elementary,  a detailed study is carried out to settle down some subtleties which become important later, as the following: (a)   to give  scarcely restrictive definitions of weak and strong cone (Def. \ref{d1}) and  to recover    properties of such cones (Prop. \ref{p_2.3}) which lie under the natural intuition (Ex. \ref{ex2.4}),  (b)~ the  introduction of a cone structure as a submanifold in $TM$ (Def. \ref{d_conestructure}) as well as its cone geodesics (Def. \ref{def_cone_g}) and (c) a useful  description of a cone structure by using cone triples (Th. \ref{p_transversality}). 
 
In Section \ref{s3}, the general background on Lorentz-Finsler metrics is introduced. We start at the basic notion of (properly) Lorentz-Minkowski norm on a vector space $V$, and prove a series of properties, including the existence of a natural smooth and convex cone (Lemma \ref{l(i)}, Prop. \ref{propiedades}). Then, the notion of Lorentz-Finsler metric is introduced and discussed (Def.~\ref{finslerst}, Prop.~ \ref{p_beem}), and the existence of an associated cone structure is proven (Cor.~\ref{c_3.6}). Moreover, Lorentz-Finsler metrics with the same cone are characterized as those {\em anisotropically equivalent} (Def. \ref{d_anisotropic}, Th. \ref{t_anisotropic}). 

In Section \ref{s4}, we focus on the construction of natural classes of smooth Lorentz-Finsler metrics. This has been a non-trivial issue in the literature. Indeed, some authors have included  the  existence of some non-smooth directions as a basic feature of Lorentz-Finsler metrics  (see Appendix \ref{s_a1}).  First, we provide a simple general construction by using a classical Finsler metric and a one-form (Th. \ref{t_examp} and its corollaries). Then,  the relativistic notions of {\em stationary} and {\em static} spacetime are revisited, including their explicit construction (Subsection   \ref{ex_stationary}). General procedures to construct new Lorentz-Finsler metrics from others with the same cone are developed in  Subsection  \ref{s4.2}.  Finally, the construction of all Lorentz-Finsler metric using  Riemannian and classical Finsler ones is  shown in Th. \ref{t_carac}

In Section \ref{s5},  we show first how, given any cone structure $\C$,  the choice of any cone triple $(\Omega, T,F)$ yields naturally a continuous Lorentz-Finsler metric $G$ which is smooth everywhere but on the timelike direction spanned by $T$ (Prop. \ref{l_G}). Then, we give a simple procedure to  smoothen  $G$ around $T$, obtaining  so a (smooth) Lorentz-Finsler metric with the same cone $\C$ (Th. \ref{t_principal}). As a consequence, each $\C$ is naturally associated with a class of conformally anisotropic Lorentz-Finsler metrics  (Cor.   \ref{t_PRINNCIPAL},   Rem.~\ref{rPRINCIPAL}). 

In Section \ref{s6}, after a brief summary on maximizing properties of geodesics for a Lorentz-Finsler metric (\S  \ref{s_61}),	the natural notion of {\em cone geodesic} is developed. Such geodesics (and, consistently,    their  conjugate or focal points) can be  computed  as lightlike pregeodesics for   any compatible Lorentz-Finsler metric $L$ (Th. \ref{t_CONE}); moreover, the computation can also be carried out by using the simple continuous Finsler metric $G$ associated with any cone triple (Rem.  \ref{r_CONE}). Their properties of minimization and extremality are stressed in  Subsection   \ref{s_63}. Indeed, they  yield  a simple solution to the extension of classical Zermelo navigation problem  to the time-dependent case  (Cor. \ref{c_Zermelo}).  Moreover, they permit to extend  naturally the properties of the geodesics in the so-called wind-Riemmannian structures to any wind-Finslerian structure  (\S \ref{s_632}). 

In Appendix \ref{s_a1}, our definition of Lorentz-Finsler spacetimes is compared with some others in the literature, and the possibility to apply our results to some of them is stressed. Finally, 
Appendix \ref{s_a2} includes a theorem summarizing the properties of Lorentz-Minkowski norms in comparison with classical norms, for the convenience of the reader.


\section{Cone structures}\label{s2}

Along this article, only real manifolds $M$ of finite dimension  $n$ will be considered. {\em Smooth} will mean $C^2$ when hypotheses of  minimal  regularity are considered (and consistently $C^1$, $C^0$ for the first and second derivatives of these elements), but it will mean ``as   differentiable as possible'' (including eventually $C^\infty$) consistently with the regularity of the ambient for the results of smoothability to be obtained.  Moreover, from now on, $V$ will denote any real vector $n$-space, $n\geq 2$. 

\subsection{Cones in a vector space}\label{conestructure} 
 We start by introducing some notions in  $V$. 
  As in \cite{ON}, our {\em scalar products} will be assumed to be only non-degenerate (but possibly indefinite); when convenient, $V$ is endowed with any auxiliary Euclidean (i.e., positive definite)  scalar product $h_V$.  A {\em domain} will be an open connected subset. 
 
 \begin{defi}\label{d1}   A smooth  hypersurface $\mathcal \C_0$ embedded in  $V\setminus\{0\}$  is a {\em  weak  (and   salient) 
 cone} when it satisfies the following properties:
 \begin{enumerate}[(i)]
 \item {\em Conic}: 
 for all $v\in  \C_0$, the {\em radial direction spanned by $v$},   $\{\lambda v:\lambda >0\}$, is included in $\C_0$. 
 
 \item {\em Salient}: if $v\in \C_0$, then $-v\notin \C_0$.
 
 \item {\em Convex interior}: $\C_0$ is the boundary in $V\setminus  \{0\}  $ of an open  subset $A_0 \subset V\setminus \{0\}$ (the $\C_0$-{\em interior}) which is convex,
 in the sense that, for any $u,w\in A_0$,  the segment $[u,w]:= \{\lambda u+(1-\lambda ) w: 0\leq \lambda \leq 1\}\subset V$ is included entirely in $A_0$;  in what follows, $\bar A_0$  will denote the closure of $A_0$ in $V\setminus \{0\}$, so that  
 $\bar A_0=A_0\cup \C_0$. 
  \end{enumerate}

\noindent  A weak cone is said to be a {\em strong cone} or just a {\em cone} when it satisfies: 
 
\begin{enumerate}[(i)]
 \item[(iv)] {\em (Non-radial) strong convexity}: the second fundamental form of $\mathcal{C}_0$  as an affine hypersurface of $V$  is positive semi-definite (with respect to an inner direction $N$ pointing out to $A_0$) and its radical at each point $v\in \C_0$ is spanned by the radial direction $\{\lambda v: \lambda>0\}$.
 \end{enumerate}
\end{defi}

\begin{rem}\label{r2.2}  There are some slight redundancies in  Def. \ref{d1}. 

(a)   First,  we assume explicitly that $\C_0$ does not include  the zero vector.  However, this could be deduced either from the stated definition of {\em salient} or, less trivially, by using the smoothness of $\C_0$ (the conic and salient properties would yield two half-lines containing $0$ in a way incompatible with smoothness).
Moreover, once $0$ is known to be excluded from $\C_0$, 
the hypothesis (ii)   
plus  the convexity  of $A_0$ in (iii)   can be replaced  just with  the convexity of $\bar A_0$.   Indeed, the hypothesis of being salient becomes trivial then, and the convexity of $A_0$ follows because it is the interior of $\bar{A_0}$ (recall, for example, the discussion around \cite[Def. 1.4]{BGS}). 

 (b) Less trivially, there is an overall relation between the notions of convexity for a domain ${\mathcal A}$ and its topological boundary  $\partial {\mathcal A}$. 
Namely, in general, for  any Riemannian 
metric on a manifold $M$, a  
   domain   $ {\mathcal A}\subset M$ such that its closure $\bar  {\mathcal A}= {\mathcal A}\cup \partial  {\mathcal A}$ is a complete  manifold with boundary satisfies: $ {\mathcal A}$ is  convex (in the sense that each two points can be connected by a minimizing geodesic) if and only if $\partial  {\mathcal A}$ is   infinitesimally convex (in the sense that the second fundamental form
   $\sigma^N$ of $\partial  {\mathcal A}$  with respect to  one, and then any direction $N$ pointing out to  $ {\mathcal A}$, is positive semi-definite), which holds even for regularity $C^{1,1}$  (see \cite[Th. 1.3]{BCGS});  recall that this convexity is less restrictive than {\em strong convexity}, which means positive definiteness. 
 In our case,  
the previous result cannot be applied directly to ${\mathcal A}=A_0$ because 
$\bar A_0$ is not a complete Riemannian manifold with respect to the auxiliary scalar product  $h_V$ and its topological boundary $\partial A_0$ in the whole $V$  is not smooth at $0$. However,  there are several ways to overcome this (see Example \ref{ex2.4} or Lemma \ref{c_strong} below).
In any case, for a strong cone,  the hypothesis~(iii) can be deduced from (iv)  just assuming that $\C_0\cup \{0\}$ is the topological  boundary of the   domain $A_0$ in $V$.  
\end{rem}

\begin{prop}\label{properties}
 For any  weak cone  $\C_0$ in $V$: 
\begin{enumerate}[(i)]
\item 
$A_0$ is conic and salient, 
\item  the topological boundary $\partial A_0$ of $A_0$ in $V$ is equal to $\C_0 \cup\{0\}$ and it is connected, 
\item  $\bar A_0$  is a smooth manifold with  boundary,
\item  $\C_0$ is closed in $V\setminus\{0\}$. 
\end{enumerate}
\end{prop}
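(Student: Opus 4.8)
The plan is to prove the four items in the order listed, since each builds on the previous ones, and the two engines throughout are the conicity of $\C_0$ and the convexity of $A_0$. For (i), to see that $A_0$ is conic I would fix $v\in A_0$ and examine its radial ray $R_v=\{\lambda v:\lambda>0\}$. No point $\lambda_0 v$ can lie on $\C_0$: if it did, conicity of $\C_0$ would force the entire ray, in particular $v$, into $\C_0$, contradicting that $A_0$ is open and disjoint from its boundary $\C_0=\partial_{V\setminus\{0\}}A_0$. Hence $R_v$ avoids $\C_0$, so it is a connected subset of the disjoint open union $A_0\sqcup(V\setminus\{0\}\setminus\bar A_0)$; as $v\in A_0$, the whole ray lies in $A_0$. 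Saliency of $A_0$ is shorter: if both $v$ and $-v$ were in $A_0$, convexity would put the segment $[v,-v]$, hence its midpoint $0$, inside $A_0\subset V\setminus\{0\}$, which is absurd.

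For (ii), I would first observe that $0\in\bar A_0$ in $V$: for any $v\in A_0$, letting $\lambda\to 0^+$ along the ray (now known to lie in $A_0$) gives $\lambda v\to 0$, and since $0\notin A_0$ this shows $0\in\partial_V A_0$. Away from the origin the boundaries taken in $V$ and in $V\setminus\{0\}$ coincide, so $\partial_V A_0\setminus\{0\}=\C_0$ and therefore $\partial_V A_0=\C_0\cup\{0\}$. For connectedness I would write $\C_0\cup\{0\}$ as the union, over $v\in\C_0$, of the closed half-lines $\{\lambda v:\lambda\ge0\}$ (using conicity of $\C_0$): each is connected and they all share the point $0$.

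The heart of the proof, and the part I expect to be the main obstacle, is (iii): showing that near every $p\in\C_0$ the set $\bar A_0$ is diffeomorphic to a half-space. Since $\C_0$ is a smooth embedded hypersurface and $p\neq 0$, I would choose a flattening chart $U$ around $p$ in which $\C_0$ becomes $\{x_n=0\}$, splitting $U$ into two open sides $U^{+}$ and $U^{-}$. The key claim is that $A_0$ agrees locally with exactly one side. One-sidedness is precisely where convexity enters: if $A_0$ met both $U^{+}$ and $U^{-}$ near $p$, a segment joining two such points would cross $\{x_n=0\}=\C_0$, yielding a point of $A_0\cap\C_0=\emptyset$. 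Once $A_0$ is confined to, say, $U^{+}$, a connectedness argument upgrades this to equality: $U^{+}$ is connected, meets $A_0$, and avoids $\partial_V A_0=\C_0\cup\{0\}$ (shrinking $U$ so that $0\notin U$), so $U^{+}\subset A_0$ and thus $A_0\cap U=U^{+}$. Consequently $\bar A_0\cap U=\{x_n\ge0\}$, giving the manifold-with-boundary structure with interior $A_0$ and boundary $\C_0$; at interior points $p\in A_0$ one just uses that $A_0$ is open.

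Finally, (iv) is immediate: a topological boundary is always closed in its ambient space, so $\C_0=\partial_{V\setminus\{0\}}A_0$ is closed in $V\setminus\{0\}$.
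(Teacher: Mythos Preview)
Your proof is correct and follows essentially the same route as the paper's: the arguments for (i), (ii), and (iv) match almost verbatim, and for (iii) you spell out in detail (one-sidedness via convexity, then a connectedness upgrade) what the paper compresses into the single remark that the smooth hypersurface $\C_0$ locally divides a small neighborhood. The only implicit point worth making explicit is that your chart neighborhood $U$ should be taken convex in $V$ (e.g.\ a small ball) so that the segment joining points of $U^{+}$ and $U^{-}$ stays inside $U$, where the coordinate $x_n$ is defined.
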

\begin{proof}
To see that $A_0$ is conic, observe that,  otherwise,   the radial line containing some point $v$ of $A_0$ must contain a point
$w$ in  $\partial A_0\setminus \{0\}$,  but then $w$  would belong to $\C_0$ and, as this set is conic,  $v\in\C_0$, a contradiction with $(iii)$ in Def.~\ref{d1}. Moreover, $A_0$ is salient because it is convex and by definition $0\notin A_0$  concluding part $(i)$.   The first assertion of part~$(ii)$  follows from $(iii)$ in Def. \ref{d1} and the fact that $0$ belongs to the closure of $A_0$. Moreover, $\partial A_0$ is  (arc-)connected because it is conic (the boundary of a conic subset) and it contains the zero vector. To prove~$(iii)$,  observe that $\C_0$ is a smooth hypersurface and then  for every $p\in \C_0$, it divides  every small enough neighborhood of $p$, and (iv) follows because $\C_0$ is a boundary.  
\end{proof}
%
%
%
%
%
%
%

\begin{exe}\label{ex2.4}   Next, let us show a simple way to construct weak and strong cones, which will turn out completely general (Prop. \ref{p_2.3}). 
Let $\Pi$ be a hyperplane of $V$ which does not contain the zero vector. 
 When $n>2$, consider any   compact connected  embedded  hypersurface  (without boundary) $\s$ of $\Pi$, which is the boundary of an open  bounded  region 
 $\B$ of $\Pi$  by the Jordan-Brouwer Theorem.  
 Let $\C_0 \subset V$ (resp. $A_0\subset V$) be the set containing all the open half-lines  departing  from $0$  and meeting  $\s$  (resp.  $\B$).

Clearly, $\C_0$ is a weak cone (with interior $A_0$) if and only if $\s$ is infinitesimally 
convex with respect to $\B$ (thus diffeomorphic 
to an $(n-2)$-sphere), and $\C_0$ is a strong 
cone if and only if the second fundamental 
form of $\s$ is positive definite\footnote{\label{foot4} In 
this case,  when $n> 3$, $\s$ 
is  an {\em ovaloid}   (i.e. it is a  compact connected embedded hypersurface with positive sectional curvature)    of $\Pi$  by Gauss formula.   It 
is straightforward that any ovaloid is 
diffeomorphic to a sphere because its Gauss 
map yields a diffeomorphism (see, for example, \cite[VII. Th. 5.6]{KN}).  However, given any $n> 2$,  the result 
holds for $S_0$ 
 even when its second fundamental form  is only positive semi-definite.  Indeed, 
choosing any point $r_0\in \B$ all the half-lines starting at  $r_0$   in $\Pi$ must cross once 
and  transversely $\s$ (recall   the characterization of infinitesimal convexity 
in Rem.\ref{r2.2} (b)  and \cite[Prop. 3.2]{BCGS}),   providing then a diffeomorphism between $S_0$ and the ($n-2$)-sphere.  }.  In the case $n=2$, the role of $\s$ 
(resp. $\B$) can be played by any two distinct 
points $p,q\in \Pi$ (resp. the open segment with
endpoints $p,q$) and all the weak cones  become 
also strong ones.

\end{exe}

 The next technical result will be useful later;  it also stresses  the necessity of the compactness of $\s$ in the previous example.

\begin{lemma}\label{c_strong}
 Let $A_0$ be  any  connected  open conic salient  subset of $V$ 
 such that  its closure $\bar A_0$ in $V\setminus \{0\}$  is a smooth manifold with boundary 
 $\C_0:= \bar A_0\setminus A_0$. 
 Consider any affine hyperplane $\Pi\subset V$, with $0\not\in \Pi$, and the (vector) hyperplane $\Pi_0$ parallel to $\Pi$ through $0$.  
  The following properties are equivalent:
\begin{enumerate}[(i)]
\item  $\Pi$ is crossed transversely by all the radial directions $\{\lambda v:\lambda>0\}$ in $ \bar A_0$,
\item $\Pi$ is crossed transversely by all the radial directions $\{\lambda v:\lambda>0\}$ in $\C_0$,

\item 
when $n=2$, $\Pi\cap \C_0$ contains exactly two points; when $n\geq 3$, $\Pi_0$ does not intersect $\C_0$ and $\Pi\cap \bar A_0  \not=\emptyset$,
\item 
when $n=2$, $\Pi\cap A_0$ is an open (non-empty) segment; when $n\geq 3$, $\Pi_0$ does not intersect $ \bar A_0 $ and $\Pi\cap  \bar A_0  \not=\emptyset$. 
\end{enumerate}
 When these properties hold, then: 
\begin{itemize}
\item[(a)]   $\Pi\cap \bar A_0$ is  compact and 
$S_0:=\Pi\cap  \C_0$ is a compact embedded $(n-2)$-submanifold. 
\item[(b)]  
  $\C_0$ is a weak (resp. strong) cone with inner domain $A_0$ if and only if:  in dimension $n=2$, always; in dimension $n>2$, when $S_0$ is
 infinitesimally  convex (resp. strongly convex)  
 towards $A_0\cap \Pi$. 
 In  this case, $S_0$ is a topological sphere of $\Pi$ with non-negative  sectional curvature, and  it becomes an ovaloid  when $n>3$ and $\C_0$ is a strong cone. 

\end{itemize}
\end{lemma}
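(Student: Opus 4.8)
The plan is to linearise everything by writing the affine hyperplane as a level set of a functional and then translating each of (i)--(iv) into a statement about the sign of that functional, the real work being to transport positivity from the boundary $\C_0$ to the full domain $\bar A_0$. Fix a linear form $\Omega\colon V\to\R$ with $\Pi=\Omega^{-1}(1)$, so that $\Pi_0=\ker\Omega$. For $v\in V\setminus\{0\}$ the ray $\{\lambda v:\lambda>0\}$ crosses $\Pi$ transversely exactly when $\Omega(v)>0$, and is parallel to $\Pi$ (missing it) exactly when $v\in\Pi_0$. Hence (i) reads ``$\Omega>0$ on $\bar A_0$'' and (ii) reads ``$\Omega>0$ on $\C_0$''. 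I record two structural facts I will use repeatedly: since $A_0$ is conic, salient and has smooth boundary, it is a nonempty proper open set with nonempty boundary $\C_0$; and since $\bar A_0$ is conic, its section $\Sigma:=\bar A_0\cap\mathbb{S}$ by the $h_V$-unit sphere $\mathbb{S}\subset V$ is compact, $\bar A_0\cong(0,\infty)\times\Sigma$, and $\bar A_0$ is connected (being the closure of the connected set $A_0$).

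For the equivalences I would run the cycle (i)$\Rightarrow$(ii)$\Rightarrow$(iii)$\Rightarrow$(iv)$\Rightarrow$(i). The implications (i)$\Rightarrow$(ii) and (ii)$\Rightarrow$(iii) are immediate (for the second, $\Omega>0$ on $\C_0$ gives $\C_0\cap\Pi_0=\emptyset$, and scaling any $v\in\C_0$ by $1/\Omega(v)$ produces a point of $\Pi\cap\bar A_0$), and (iv)$\Rightarrow$(i) follows because $\Omega\neq0$ on the connected set $\bar A_0$ forces a constant sign, which is positive since $\Pi\cap\bar A_0\neq\emptyset$. The crux is (iii)$\Rightarrow$(iv) for $n\geq3$: assuming $\C_0\cap\Pi_0=\emptyset$ but some $w\in A_0\cap\Pi_0$, the set $A_0\cap\Pi_0$ is a nonempty, relatively open, conic subset of $\Pi_0$ whose relative closure in $\Pi_0\setminus\{0\}$ cannot meet $\C_0\cap\Pi_0=\emptyset$; hence it is also relatively closed, i.e.\ clopen, in $\Pi_0\setminus\{0\}$. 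As $\dim\Pi_0=n-1\geq2$, this punctured hyperplane is connected, so $A_0\cap\Pi_0=\Pi_0\setminus\{0\}$, which contains both $w$ and $-w$ and contradicts salientness. Thus $\bar A_0\cap\Pi_0=\emptyset$, which together with $\Pi\cap\bar A_0\neq\emptyset$ is (iv). The case $n=2$ must be handled by hand: here $\bar A_0$ is a closed angular sector, condition (ii) forces angular width strictly below $\pi$ (two opposite rays cannot both satisfy $\Omega>0$), whence $\bar A_0$ is the convex positive hull of its two boundary rays and $\Omega>0$ propagates by linearity, while the ``two points/open segment'' descriptions in (iii),(iv) come from counting ray--$\Pi$ intersections. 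This contrast is exactly the announced role of compactness: the connectedness of $\Pi_0\setminus\{0\}$, on which the general argument rests, fails precisely when $n=2$.

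For part (a) I would argue as follows. Under (i), $\Omega>0$ on the compact section $\Sigma$, so the continuous map $x\mapsto x/\Omega(x)$ sends $\Sigma$ onto $\Pi\cap\bar A_0$, which is therefore compact. Next, $\C_0$ meets $\Pi$ transversely at every $p\in S_0$: since $\C_0$ is conic the radial line $\R p$ lies in $T_p\C_0$, and $\Omega(p)=1$ gives $p\notin\Pi_0$, so $T_p\C_0\neq\Pi_0$ and hence $T_p\C_0+\Pi_0=V$, i.e.\ $\C_0\pitchfork\Pi$. Consequently $S_0=\C_0\cap\Pi$ is an embedded submanifold of dimension $(n-1)+(n-1)-n=n-2$, and it is compact, being closed in the compact set $\Pi\cap\bar A_0$ (note $S_0=(\Pi\cap\bar A_0)\setminus(\Pi\cap A_0)$ with $\Pi\cap A_0$ open in $\Pi$).

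For part (b), write $B_0:=A_0\cap\Pi$, a nonempty open subset of $\Pi$ with $\bar B_0=\Pi\cap\bar A_0$ compact and boundary $S_0$. Because $A_0$ is conic, $A_0$ is convex if and only if $B_0$ is convex: a cross-section of a convex cone is convex, and the positive hull of a convex set is convex. For $n>2$, $\bar B_0$ is a compact (hence complete) manifold with boundary, so the convexity/infinitesimal-convexity equivalence recalled in Rem.~\ref{r2.2}(b) (\cite{BCGS}) gives that $B_0$ is convex iff $S_0$ is infinitesimally convex towards $B_0=A_0\cap\Pi$; this is precisely condition (iii) of Def.~\ref{d1}, so $\C_0$ is a weak cone iff $S_0$ is infinitesimally convex. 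To treat the strong-cone condition (iv) of Def.~\ref{d1}, I would compare second fundamental forms at $p\in S_0$ using the parametrisation $(t,s)\mapsto t\,\phi(s)$ of $\C_0$ by the cone over a chart $\phi$ of $S_0$: the radial direction is automatically in the radical of $\mathrm{II}_{\C_0}$ (the rulings are straight), and on directions tangent to $S_0$ one finds $\mathrm{II}_{\C_0}=t^2\alpha\,\mathrm{II}_{S_0}$ with $\alpha>0$, the potentially annoying $\Omega$-normal term dropping out because $\Omega\circ\phi\equiv1$. Hence $\mathrm{II}_{\C_0}$ is positive semidefinite with radical exactly the radial line iff $\mathrm{II}_{S_0}$ is positive definite, i.e.\ iff $S_0$ is strongly convex. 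For $n=2$, $\C_0$ consists of two radial lines, totally geodesic in $V$, so $\mathrm{II}_{\C_0}\equiv0$ and its radical equals $T_p\C_0=\R p$, the radial line; thus $\C_0$ is always both a weak and a strong cone, matching the ``always'' clause. Finally, infinitesimal convexity makes the radial projection from an interior point of $B_0$ a diffeomorphism $S_0\cong\mathbb{S}^{n-2}$ (as in Rem.~\ref{r2.2}(b) and \cite{BCGS}), and the Gauss equation yields nonnegative sectional curvature, improving to positive curvature, hence an ovaloid, when $n>3$ and $S_0$ is strongly convex. I expect the two genuinely delicate points to be the clopen/connectedness argument transferring positivity of $\Omega$ from $\C_0$ to $A_0$ for $n\geq3$, and the careful bookkeeping of normals in the $\mathrm{II}_{\C_0}$ versus $\mathrm{II}_{S_0}$ comparison needed for the strong-cone case.
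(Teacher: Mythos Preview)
Your proof is correct and follows essentially the same approach as the paper's: the same cycle (i)$\Rightarrow$(ii)$\Rightarrow$(iii)$\Rightarrow$(iv)$\Rightarrow$(i) hinging on the connectedness of $\Pi_0\setminus\{0\}$ for $n\geq 3$, and the same reduction of the weak/strong cone conditions on $\C_0$ to infinitesimal/strong convexity of the section $S_0$. The only minor executional differences are that the paper obtains compactness in (a) by a contradiction argument (an unbounded sequence in $\Pi\cap\bar A_0$ would produce a limiting radial direction in $\Pi_0\cap\bar A_0$, contradicting (iv)), and in (b) compares second fundamental forms simply by noting that $\Pi$ is totally geodesic in $V$, so that $\sigma^N$ of $S_0$ in $\Pi$ is the restriction of $\tilde\sigma^N$ of $\C_0$ in $V$, rather than via your explicit cone parametrisation.
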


\begin{proof} 
When $n=2$, all the assertions follow easily by observing that $A_0$ must be a region delimited by two half-lines,  which is convex by the salient property; so, assume $n>2$. Clearly, $(i)\Rightarrow (ii)\Rightarrow (iii)$ is trivial. 
For $(iii)\Rightarrow (iv)$, observe that $(iii)$ implies that if $\Pi_0\cap A_0\not=\emptyset$, then $(\Pi_0\setminus\{0\})\subset A_0$ (recall that $\Pi_0\setminus\{0\}$ is connected), contradicting that $A_0$ is salient. For $(iv)\Rightarrow (i)$, observe that, assuming $(iv)$ and taking into account that $A_0$ is connected, it follows that $ \bar A_0$ must be contained in the open half-space determined by $\Pi_0$ which contains $\Pi$. 
 
 For (a), clearly  $S_0$ is an $(n-2)$ submanifold (the  transversal intersection of two hypersurfaces) and topologically  closed. The compactness of the sets $S_0$ and $\Pi\cap \bar A_0$ follows because,  otherwise, there would exist an affine half-line of $\Pi$ where a sequence of points of the corresponding set approaches. This would imply the existence of a half-line  of $\bar A_0$ contained in $\Pi_0$, in contradiction to $(iv)$. 
 
 For the  equivalence (b) recall first that, as $\Pi$ is totally geodesic in $V$, 
the second fundamental form $\sigma^N$ of $S_0$ in $\Pi$ is the restriction of the second fundamental form  $\tilde \sigma^N$ of  $\C_0$  in $V$  and, by conicity, $\tilde \sigma^N$ is semi-definite (resp. semi-definite with radical spanned by the radial direction) if and only if $\sigma^N$ is semi-definite (resp. definite).   
Therefore,  if $\C_0$ is a weak  (resp. strong)  cone then $\sigma^N$ is positive semi-definite  (resp. positive definite). 

 For 
the converse and the last assertion, let us  check first that $S_0$ is  connected. 
By conicity, the 
 (arc-)connectedness of $A_0$ implies 
that so is $\Pi\cap A_0$.
By the Jordan-Brouwer Theorem, 
each connected component of $S_0$ bounds an inner domain, 
 $\sigma^N$ can be positive 
semi-definite only towards its 
inner region and, thus,
 $A_0$ must lie always in the inner region delimited by each connected part of $S_0$.  So, if there were more than one part, 
 either one of them would  enclose another  (but $A_0$ would lie in the inner domain of the latter) or two connected 
parts with disjoint inner domains would exist 
(but $\Pi\cap A_0$ was connected).  

So, 
$S_0$ is connected, compact and embedded in $\Pi$; moreover,  its inner domain 
must be $\Pi \cap A_0$  
 (as the closure of this set is compact). Thus, the convexity of $\Pi \cap A_0$ (and, so,  $A_0$)  follows from its infinitesimal convexity, and the remainder
is straightforward  (recall footnote \ref{foot4} in Example \ref{ex2.4}). 
\end{proof}
 Next, all the cones are shown to be as the ones constructed in  Example~\ref{ex2.4}.  
\begin{prop}\label{p_2.3}
Let $\C_0$ be a  weak cone  with inner domain $A_0$.  Then: 
\begin{enumerate}[(i)]
\item there exists a vector hyperplane $\Pi_0 \subset V$ which does not intersect $\C_0$, 

\item for every hyperplane $\Pi_0$ as in (i)   and every linear form  $\Omega_0:V\rightarrow \R$ with $\ker \Omega_0=\Pi_0$, one of the  affine hyperplanes in $\{\Omega_0^{-1}(1), \Omega_0^{-1}(-1)\}$  intersects transversely all the  radial directions of $\C_0$  and $A_0$,  

\item  for any hyperplane $\Pi$ which intersects transversely all the  radial directions of $\C_0$, the intersection   $\s=\Pi\cap \C_0$   is  an infinitesimally convex   hypersurface of $\Pi$  diffeomorphic to an $(n-2)$-sphere. Moreover, $\C_0$ is a (strong) cone if and only if $S_0$ is strongly convex. The latter always occurs when $n=2$, and if and only if $S_0$ is  an ovaloid  (resp. $S_0$ is a curve with positive curvature) when $n>3$ (resp. $n=3$). 
\end{enumerate}
 So,   the   closure of $A_0$ in $V$    is a topological manifold with boundary $  \partial A_0= \C_0 \cup \{0\}$, 
$\C_0$ is connected if $n\geq 3$  and $\C_0$ is equal to two open half-lines starting at $0$ if $n=2$.    Moreover, $\bar A_0$ and $\bar A_0\cup\{0\}$ are convex. 
\end{prop}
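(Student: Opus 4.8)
The plan is to handle the three items in order, using Lemma \ref{c_strong} as a black box for (ii) and (iii) and reserving the genuinely new work for (i). Throughout I would rely on the standing facts that, by Def. \ref{d1} and Prop. \ref{properties}, $A_0$ is an open, connected, conic, salient and convex set whose closure in $V\setminus\{0\}$ is a smooth manifold with boundary $\C_0$, and whose closure in all of $V$ is $K:=\bar A_0\cup\{0\}$, a closed convex cone with apex $0$.

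For part (i) the decisive point is to produce a vector hyperplane avoiding $\C_0$, which I would obtain as a supporting hyperplane of $K$ at its apex. First I would verify that $K$ is \emph{pointed}, i.e. $K\cap(-K)=\{0\}$: if some $v\neq 0$ had $v,-v\in K$, then the midpoint $0$ of the segment $[v,-v]$ would be forced into the open set $A_0$ (using that a segment from a closure point to an interior point lies, except its endpoint, in the interior), contradicting salience; the salient hypotheses of Def. \ref{d1}(ii) and Prop. \ref{properties}(i) dispose of the remaining cases where one of $\pm v$ lies on $\C_0$. The standard finite-dimensional duality for pointed closed convex cones then yields a linear form $\Omega_0$ strictly positive on $K\setminus\{0\}\supset\C_0$, so $\Pi_0:=\ker\Omega_0$ meets $\C_0$ nowhere. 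This separation argument is the main obstacle, being the only step that invokes convex analysis rather than the cone machinery already established; the delicate part is precisely the verification of pointedness.

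For part (ii), given any vector hyperplane $\Pi_0$ disjoint from $\C_0$ with defining form $\Omega_0$, I would first upgrade ``$\Pi_0\cap\C_0=\emptyset$'' to ``$\Pi_0\cap\bar A_0=\{0\}$'': since $\Pi_0\setminus\{0\}$ is connected and meets the boundary $\C_0\cup\{0\}$ of $A_0$ only at $0$, the set $\Pi_0\cap A_0$ is open and closed in $\Pi_0\setminus\{0\}$ and hence empty, for otherwise the whole punctured hyperplane would lie in $A_0$, violating salience. Thus $\Omega_0\neq 0$ on the connected set $\bar A_0$, so it has constant sign there, and choosing $\varepsilon\in\{\pm 1\}$ with $\varepsilon\Omega_0>0$ on $\bar A_0$ makes the affine hyperplane $\Pi:=\Omega_0^{-1}(\varepsilon)$ (one of $\Omega_0^{-1}(1),\Omega_0^{-1}(-1)$) satisfy $\Pi\cap\bar A_0\neq\emptyset$, since $v/(\varepsilon\Omega_0(v))\in\Pi\cap A_0$ for any $v\in A_0$. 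Together with $\Pi_0\cap\C_0=\emptyset$ this is exactly condition (iii) of Lemma \ref{c_strong} for $n\geq 3$ (the case $n=2$ being immediate), whose equivalence with (i) there gives transversality of $\Pi$ with all radial directions of both $\C_0$ and $A_0$.

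For part (iii), I would feed the given transverse hyperplane $\Pi$ directly into Lemma \ref{c_strong}: transversality with the radial directions of $\C_0$ is condition (ii) there, so conclusions (a) and (b) apply and show that $S_0=\Pi\cap\C_0$ is a compact embedded $(n-2)$-submanifold which, as $\C_0$ is a weak cone, is infinitesimally convex and a topological sphere, and is strongly convex exactly when $\C_0$ is a strong cone, with the ovaloid (for $n>3$) and positive-curvature (for $n=3$) refinements recorded in the footnote of Example \ref{ex2.4}. Finally, for the closing assertions I would use the radial homeomorphism $\bar A_0\cong(0,\infty)\times(\Pi\cap\bar A_0)$ to identify $\bar A_0\cup\{0\}$ with the cone over the closed $(n-1)$-ball $\Pi\cap\bar A_0$, making it a topological manifold with boundary $\partial A_0=\C_0\cup\{0\}$ (Prop. \ref{properties}(ii)), and to read off that $\C_0$ is connected for $n\geq 3$ and a pair of half-lines for $n=2$; convexity of $\bar A_0\cup\{0\}$ is inherited as the closure of the convex set $A_0$, and convexity of $\bar A_0$ follows because deleting the extreme apex $0$ cannot remove a needed segment, any segment through $0$ joining two points of $\bar A_0$ contradicting pointedness.
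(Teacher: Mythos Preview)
Your proposal is correct and follows essentially the same strategy as the paper. The one substantive difference is in (i): you verify that $K=\bar A_0\cup\{0\}$ is a pointed closed convex cone and then invoke the standard duality result producing a strictly positive linear form, whereas the paper carries out this separation by hand---it shows $0\notin CH(\bar A_0\cap S)$ for an auxiliary Euclidean sphere $S$ (a computation equivalent to your pointedness check) and then takes the hyperplane orthogonal to the nearest point of this compact convex hull. Parts (ii), (iii) and the closing assertions are handled the same way in both proofs, via Lemma~\ref{c_strong}.

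Two small remarks. First, a notational slip: by the paper's convention $\bar A_0$ is the closure in $V\setminus\{0\}$, so write $\Pi_0\cap\bar A_0=\emptyset$ rather than $=\{0\}$. Second, your upgrade in (ii) uses that $\Pi_0\setminus\{0\}$ is connected, which needs $n\ge 3$; your parenthetical ``the case $n=2$ being immediate'' deserves caution, since for $n=2$ a line $\Pi_0$ through $0$ can avoid $\C_0$ while still meeting $A_0$ (e.g.\ the diagonal when $\C_0$ consists of the positive $x$- and $y$-axes), and then neither $\Omega_0^{-1}(\pm1)$ crosses both rays of $\C_0$. The paper's argument for (ii) shares this wrinkle, so it is a delicacy of the statement for $n=2$ rather than a defect of your proof relative to the paper.
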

\begin{proof}
 Consider the natural sphere  $S$ for the auxiliary scalar product $h_V$.  
 
 For  part {\em (i)}, take $\bar D_0=\bar A_0\cap S$ and its convex hull $CH(\bar D_0)$   in $V$, i.e., the smallest convex subset of $V$ containing $\bar D_0$ 
(intersection of all the convex subsets of $V$ containing $\bar D_0$). 
 Observe that the convex hull is equal to the subset of convex combinations of a finite number of points in $\bar D_0$, namely,
\[CH(\bar D_0)=\big\{ \sum_{i=1}^k\lambda_i v_i: k\in \N,  \text{$v_i\in \bar D_0$, $ \lambda_i\geq 0,  $ for $i=1,\ldots,k$; $\sum_{i=1}^k\lambda_i=1$ }\big\}.\] 
Let us see that $0\not\in CH(\bar D_0)$. 
Otherwise, there would exist a minimum finite number of points   $v_1, \dots , v_k\in \bar D_0$, $k\geq 2$, such that $0= \sum_{i=1}^k\lambda_i v_i$ with  $\lambda_i>0$  for all $i$.\footnote{\label{foot_carat} The existence of a finite number of points satisfying the stated property (and, thus, such a minimum number $k$) is well-known in convex theory  (recall the description given above of the convex hull).  Indeed,  one knows even $k\leq n+1$ (Caratheodory Theorem), but this inequality is not required here.} The convexity  and the conicity  of $\bar A_0  \cup \{0\} $  imply that $\hat v_1:= \sum_{i=2}^k\lambda_i v_i =-\lambda_1 v_1$ belongs to $ \bar A_0 $. As $0$ belongs to the segment $[v_1, \hat v_1]$ by construction, one of the following contradictions follows: (a) if $v_1,\hat v_1\in \C_0$, then $\C_0$ is not salient, (b) if $v_1,\hat v_1\in A_0$ then, by the convexity of $A_0$, $0\in A_0$,    or (c) otherwise (either $v_1\in \C_0$ and $\hat v_1\in A_0$ or the other way round), as $A_0$ is open and $\C_0\subset \partial A_0$,  there are two vectors $w_1,\hat w_1 \in A_0$ (arbitrarily close to $v_1,\hat v_1$) where the case (b) applies.
Recall that $CH(\bar D_0)$ is closed\footnote{Indeed, it is compact, as it is the convex hull of a compact subset in $V$ (this follows directly from Caratheodory Theorem, see footnote \ref{foot_carat}).} and, thus, there exists $ v_0\in CH(\bar D_0)$ such that  $\sqrt{h_V(v_0,v_0)} (> 0)$ is equal to the $h_V$-distance from $0$ to $CH(\bar D_0)$. So, if $\Pi$ is the affine hyperplane $h_V$-orthogonal to $v_0$ passing through $v_0$,  then  all $CH(\bar D_0)$ lies in the closure of the half-space of $V\setminus \Pi$ which does not contain $0$. Therefore, $\Pi_0$ can be chosen as the hyperplane parallel to $\Pi$ through $0$.



For {\em (ii)}, as $A_0$ is convex (thus, connected),   the whole  $\C_0$ is contained in one of the two open half-spaces in $V\setminus \Pi_0$. As $\C_0$ is conic, all its radial directions must intersect transversely   one of the hyperplanes $\{ \Omega_0^{-1}(1),\Omega_0^{-1}(-1)\}$  (the one which is included in that half-space).  

 Part {\em (iii)} follows from the last assertion in Lemma \ref{c_strong}. 

 For the last  assertions,  notice that the segments connecting $0$ and $S_0$ provide a topological chart around $\{0\}$,  concluding that $A_0$ is a topological manifold with boundary $\C_0\cup \{0\}$.  The convexity of $\bar A_0$ and $\bar A_0\cup \{0\}$ follows straightforwardly from the convexity of $S_0$. 
\end{proof}


\subsection{Cone structures and causality}\label{s2_con_est_causality}  Next,  previous notions are  transplanted      to manifolds. 

\begin{defi}\label{d_conestructure} Let $M$ be a manifold of dimension $n\geq 2$. A {\em (strong) cone structure}
 (resp. {\em weak cone structure})  $\mathcal{C}$ is an embedded hypersurface of $TM$ such that, for  each $p\in M$: 

\begin{enumerate}[(a)]
\item[(a)]  $\mathcal{C}$ is  tranverse to  the fibers  of the tangent bundle,  that is, if $v\in \C_p$  $:=$ $T_pM\cap \mathcal{C}$,  then $T_v(T_pM)+T_v\C=T_v(TM)$, and \item[(b)]  each $\mathcal{C}_p$
is a 
 strong cone  (resp. weak cone)  in $T_pM$ (as in Def. \ref{d1}).
\end{enumerate}
 The inner domain of each $\C_p$ will be denoted by $A_p$ and 
$A:=\cup_{p\in M}A_p$, which  will be called a  strong (resp. weak)  {\em cone domain}.  In the following, cone domains will always be assumed strong unless otherwise specified. 
\end{defi}

  Apart from the difference of being strong or weak, the terminology ``cone structure''  is used sometimes in a somewhat more general framework. For example,  in \cite[Def. 2.1]{Mak18}, there is  no assumption of convexity, as this reference focuses on local classification results by using Cartan's method of equivalence.
\begin{rem} 

The condition of transversality (a) also means that $\C$
is transverse to all the tangent spaces $T_pM, p\in M$  or, equivalently, $T_v(T_pM)\not\subset T_v\C$.

The intuitive role of transversality is the following. 
A cone structure $\C$ puts a cone at each tangent space in a seemingly smooth way, as $\C$ is smooth. However, one needs that the distribution of the cones  is a smooth set-valued function of $p\in M$, and this is grasped by our notion of transversality\footnote{If only  a  continuous distribution of cones  were required, then transversality would be interpreted at the topological level (compare with \cite{FS12}).}. 

The same property of transversality would be necessary for Riemannian or Finslerian metrics. Indeed, such a metric is determined by 
the  hypersurface $S\subset M$ formed by 
all
the indicatrices (unit spheres) $S_p$ for $p\in M$  (each $S_p$ being either an ellipsoid  centered at the origin $0_p\in T_pM$ or a  strongly  convex closed hypersurface enclosing $0_p$, respectively). However, this hypersurface $S$ must satisfy the condition of transversality (otherwise, the original 
metric would not be smooth), see \cite[Prop. 2.12]{CJS14}. The role of transversality will be apparent  in the proof of Th. \ref{p_transversality}.

\end{rem}

 A {\em Lorentzian metric} $g$ on a (connected) manifold $M$ is a symmetric bilinear form  with index one (signature $(-,+\dots , +)$). It is well known that its lightlike vectors (those $v\in TM\setminus \mathbf{0}$ with $g(v,v)=0$)  provide locally two (strong) cone structures (see Cor.  \ref{admitconest} to check consistency with our definition) and $g$ is called {\em time-orientable} when these cone structures  are globally defined; such a property becomes equivalent to the existence of a globally defined timelike vector field $T$ (i.e., $g(T,T)<0$), see \cite{MinSan,ON} for background. A {\em  (classical)   spacetime} is a time-orientable Lorentzian manifold $(M,g)$ where one of its two cone structures, called the {\em future-directed cone structure}, has been selected. The next definitions for cone structures generalize trivially those for classical spacetimes, even though we drop the expression ``future-directed'' as only one cone structure is being considered.


 Given a weak cone structure $\C$ there are two classes of privileged vectors at each tangent space:  the  {\em timelike} vectors, which are those in the cone domain $A$, and the {\em  lightlike} vectors, which are the vectors in $\C$; both of them will be called {\em causal}. This allows one to extend all the definitions in the Causal Theory, such as timelike, lightlike and causal curves and, then, the chronological $\ll$ and causal $\leq $ relations ($p\ll q$ if there exists a timelike curve  from $p$ to $q$; $p\leq q$ either if $\gamma$ can be found causal or if $p=q$), chronological $I^+(p)=\{q\in M: p\ll q\}$ and causal $J^+(p)=\{q\in M: p\leq q\}$ futures  for any $p\in M$, as well as the {\em horismotic} relation, namely: 
$p\rightarrow q$ if and only if $q\in J^+(p)\setminus I^+(p)$, for $p,q\in M$. Observe that the cone structure determines only the {\em future-pointing} directions, but one can say that a vector $v\in TM$ is {\em past-pointing} timelike (lightlike, causal)  if $-v\in TM$ is timelike (lightlike, causal) and, so, define analogous past notions. 
Consistently,  a {\em time} (resp. {\em temporal}) function is a real function $t:M\rightarrow\R$ which is strictly increasing when composed with (future-pointing) $C^1$-timelike curves (resp. a smooth time function $\tau$ such that no causal vector is tangent to the slices $\tau=$constant). Other conditions about Causality \cite{BEE, MinSan} as the notion of Cauchy hypersurface or being {\em strongly causal}, {\em stably causal} or {\em globally hyperbolic} are extended naturally. More subtly, $\C$ admits the following notion of geodesic which generalizes the usual lightlike pregeodesics of spacetimes.

\begin{defi}\label{def_cone_g} Let $\C$ be a  weak cone structure.
A continuous curve $\gamma: I\rightarrow M$ ($I\subset \R$ interval) is a {\em cone geodesic} if it is locally horismotic, that is, for each $s_0\in I$ and any neighborhood $V\ni 
\gamma(s_0)$, there exists a smaller neighborhood 
$U\subset V$ of $\gamma(s_0)$ such that, if $I_
\epsilon:=[s_0-\epsilon, s_0+\epsilon]\cap I$ 
satisfies $\gamma(I_\epsilon) \subset U$ for some $\epsilon>0$, then: 
$$
s<s' \Leftrightarrow \gamma(s)\rightarrow_U \gamma(s') \qquad \forall s,s'\in I_\epsilon, $$ where $\rightarrow_U$ is the horismotic relation for the natural restriction $\C_U$ of the cone structure to $U$.
\end{defi}

\begin{rem}
 Until now, the strengthening of the hypothesis  {\em weak cone} into {\em strong cone} has not been especially relevant. However, there will be important differences for geodesics,  which are  similar to the  standard  Finslerian case: if the indicatrix of a Finsler metric is assumed to be only infinitesimally convex (but not strongly convex), the local uniqueness of geodesics with each velocity is lost (see \cite{Matveev}). So, in what follows, we will focus only on the case of strong cones and strong cone structures, dropping definitively the word {\em strong}.
\end{rem}

\subsection{Pseudo-norms and conic Finsler metrics}  Even though the  notions  of Lorentz metric and spacetime will be extended to the Finslerian setting in Section \ref{s3}, 
next some basic language on pseudo-norms and Finsler manifolds are introduced.
\begin{defi} \label{d_psnorm}
A  function $L:A_0\subset V\setminus \{0\}\rightarrow \R$ is a {\em  (conic, two-homogeneous)  pseudo-Minkowski norm} if
\begin{enumerate}[(i)]
\item  $A_0$ is a  (non-empty)   {\em conic}  open subset (that is,  if $v\in A_0$, then $\lambda v\in A_0$ for every $\lambda>0$,  but $A_0$ is not necessarily salient), 
\item  $L$ is smooth and  {\em positive homogeneous of degree $2$}  (i.e., $L(\lambda v)=\lambda^2 L(v)$ for every $v\in A_0, \lambda>0$),  and
\item for every $v\in A_0$,  the  {\em fundamental tensor}  $g_v$ given by
\begin{equation}\label{fundtens}
 g_{v}(u,w)=\frac{1}{2}\left.\frac{\partial^{2}}{\partial  r  \partial  s} L(v+ r  u+sw)
 \right|_{ r  =s=0}
 \end{equation}
 for $u,w\in V$,  is nondegenerate.
\end{enumerate}
\end{defi}
The choice of being two-homogeneous becomes natural when the fundamental tensor is indefinite; however,  one-homogeneity will be required when convenient, namely:

\begin{defi} \label{d_conicnorm} A {\em conic Minkowski norm} is a  positive  function $F:A_0\subset V\setminus \{0\} \rightarrow \R^+  :=(0,\infty)$,   with $A_0$ open and conic,   and $F$ homogeneous of degree one (i.e., $F(\lambda v)=\lambda F(v)$ for every $\lambda >0$ and $v\in A_0$)  satisfying: the fundamental tensor $g_v$ in \eqref{fundtens} for $L=F^2$ is positive definite for every $v\in A_0$ (in particular, $L=F^2:A_0\rightarrow \R $ is a pseudo-Minkowski norm).
 
 Furthermore, when $A_0=V\setminus \{0\}$, then $F$ is a  {\em Minkowski norm}.  
\end{defi}

\begin{rem} 
A Minkowski norm   can be extended continuously to $0$ as  $F(0)=0$; this extension is always $C^1$, but it is $C^2$ if and only if $F$ is the norm associated with a Euclidean scalar product (see \cite[Prop. 4.1]{Warner65}). Such an extension will be used when necessary with no further mention.
\end{rem}

 Let us recover classical Finsler metrics consistently with our  definitions. 

\begin{defi}\label{d_ classical Finsler}  A {\em Finsler metric} on a manifold $M$ is a two-homogeneous smooth positive function $L:TM\setminus {\bf 0}\rightarrow \R$ with positive definite fundamental tensor $g_v$ in \eqref{fundtens} for all $v\in TM\setminus \bf 0$. 
When required, $L$  will be  replaced with $F=\sqrt{L}$ and extended continuously to the zero section $\bf 0$  (so that each $F_p:= F|_{T_pM}$ is a Minkowski norm). 

 An open subset $A^* \subset TM$ is {\em conic} when each $A^*_p := A^* \cap T_pM$, $p\in M$ is non-empty and conic; in this case, $A^*$ is a {\em conic domain} when each $A^*_p$ is also connected (and, then, a (strong) {\em cone domain} when the  additional conditions of  Def. \ref{d_conestructure} are also fulfilled).
 When $L$ above  satisfies all the properties of a Finsler metric but it is defined only an open conic subset $A^*\subset TM$, we say that $L$ is a conic {\em Finsler metric}. \footnote{\label{foot_coniccone}  Even though conic Finsler metrics are defined here in arbitrary open conic subsets, here we emphasize the notions of {\em conic domain} and {\em cone domain} to be used later. Tipically,    we will select a conic domain as a connected part of a conic open set and, when a Lorentz-Finsler metric is defined on such a domain, we will prove that it is a cone domain  (see Rem. \ref{r3.6} and Prop. \ref{p_beem}).  These subtleties should be taken into account when comparing with references on the topic.   }

\end{defi}
 This definition is extended trivially to any vector bundle $VM$ (in particular, to any subbundle of $TM$) in such a way that a Finsler metric  on $VM$ becomes a smooth distribution of  Minkowski norms  in each  fibre of the bundle.

\subsection{Cone triples} Next, a natural link between cone structures and some triples which include a Finsler metric is developed.

\begin{lemma}\label{l_2.1} Given a cone structure $\C$, one can find on $M$: 

(a) a {\em timelike} 1-form $\Omega$ (that is, $\Omega(v)>0$ for any causal vector $v$), 

(b) an $\Omega$-unit timelike vector field $T$ ($T$ is  timelike  and $\Omega (T)\equiv 1$). 
\end{lemma}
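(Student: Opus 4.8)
# Proof Proposal for Lemma \ref{l_2.1}

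The plan is to construct the 1-form $\Omega$ first as a global object, and then produce the vector field $T$ by a transversality/normalization argument. The key geometric input is Proposition \ref{p_2.3}(i)--(ii): at every point $p$, the cone $\C_p$ lies strictly inside one of the two open half-spaces determined by a vector hyperplane $\Pi_0^p\subset T_pM$, and accordingly there is a linear form $\Omega_0^p$ that is strictly positive on every causal vector in $\overline{A_p}\setminus\{0\}$. The difficulty is that Proposition \ref{p_2.3} is a pointwise (linear-algebraic) statement, so the first task is to upgrade it to a smooth global one-form.

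First I would construct $\Omega$ \emph{locally}: fix $p_0\in M$ and a separating linear form $\Omega_0^{p_0}$ for $\C_{p_0}$, which extends by a local trivialization to a smooth 1-form $\tilde\Omega$ on a neighborhood $U$ of $p_0$. Since the condition ``$\tilde\Omega(v)>0$ for all $v\in\C_p$'' is an open condition on the cone (the cone meets $\tilde\Omega^{-1}(0)$ only at $0$, and transversality of $\C$ with the fibers together with compactness of the ``unit'' section of each $\C_p$ given by Lemma \ref{c_strong}(a) keeps this property stable), shrinking $U$ guarantees $\tilde\Omega(v)>0$ for every causal $v$ over $U$. Thus locally a timelike 1-form always exists. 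Then I would \emph{globalize} with a partition of unity: take a locally finite cover $\{U_i\}$ with timelike 1-forms $\Omega_i$ on each $U_i$, a subordinate partition of unity $\{\rho_i\}$, and set $\Omega=\sum_i\rho_i\Omega_i$. The point is that timelikeness is preserved under convex combinations with nonnegative coefficients: for a fixed causal $v\in\C_p$, every $\Omega_i$ with $\rho_i(p)>0$ satisfies $\Omega_i(v)>0$, hence $\Omega(v)=\sum_i\rho_i(p)\Omega_i(v)>0$. This gives the desired global timelike $\Omega$ of part (a).

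For part (b), I would first produce \emph{any} timelike vector field and then renormalize. Existence of a (local) timelike field follows from the transversality condition (a) in Definition \ref{d_conestructure}: the cone domain $A=\cup_p A_p$ is an open subset of $TM$ that projects onto $M$, and a smooth local section of $A\to M$ exists because $A_p$ is open and nonempty and varies smoothly; globalizing by a convex-combination partition-of-unity argument identical to the one above (using that each $A_p$ is convex, hence closed under convex combinations, so a sum $\sum_i\rho_i T_i$ of timelike fields stays timelike by property (i) of Proposition \ref{properties}) yields a global timelike vector field $\hat T$. Finally I would set $T:=\hat T/\Omega(\hat T)$. This is well-defined and smooth because $\Omega(\hat T)>0$ everywhere (as $\hat T$ is causal and $\Omega$ is timelike), it remains timelike because each $A_p$ is conic (Proposition \ref{properties}(i)), and by construction $\Omega(T)=\Omega(\hat T)/\Omega(\hat T)\equiv 1$.

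The main obstacle is the globalization step, i.e.\ verifying rigorously that timelikeness (respectively the property of lying in the cone domain) is preserved under the convex combinations produced by the partition of unity. For $\Omega$ this rests only on positivity of linear forms and is immediate; for $\hat T$ it rests on the \emph{convexity} of each $A_p$, which is exactly hypothesis (iii) of Definition \ref{d1}, so the argument is clean provided one is careful that only finitely many $\rho_i$ are nonzero near each point (local finiteness) and that the summands all lie in the same convex cone $A_p$ at each point. A secondary technical point worth stating explicitly is the smoothness and stability of the local timelike 1-form, which I would justify via the compactness of the section $S_0=\Pi\cap\C_p$ from Lemma \ref{c_strong}(a), ensuring the strict inequality $\Omega_i(v)>0$ is uniform over a neighborhood.
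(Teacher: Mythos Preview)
Your proposal is correct and follows essentially the same approach as the paper's proof: construct $\Omega$ locally via Proposition \ref{p_2.3}, globalize by a partition of unity (using that positivity on causal vectors is preserved under nonnegative combinations), then build a global timelike vector field by the same partition-of-unity trick (using convexity of each $A_p$) and normalize to obtain $T=\hat T/\Omega(\hat T)$. Your write-up is in fact more explicit than the paper's about the stability and convexity arguments, but the strategy is identical.
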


\begin{proof} By the definition of cone, one can find at each point $p$ a one-form $\omega_p$ such that $\omega_p(\C _p)>0$  (recall Prop. \ref{p_2.3}).  By continuity (just working in coordinates) one can regard $\omega_p$ as a 1-form defined in a neighborhood $U_p$ of $p$ and satisfying $\omega_p(\C _q)>0$ for all $q\in U_p$. Now, consider  a locally finite open refinement $\{U_{p_i}:i\in \N\}$ of $\{U_p:p\in M\}$ and  a partition of unity $\{\mu_i: i\in \N\}$ subordinated to $\{U_{p_i}:i\in \N\}$. The required one-form is just 
 $\Omega= \sum_{i=1}^{+\infty} \mu_i \omega_{p_i}$. 
Once $\Omega$ is constructed, let $\tilde T$ be any timelike vector field (constructed analogously by using a partition of unity and the convexity of the cones) and put $T=\tilde T/\Omega(\tilde T)$.
\end{proof}

\begin{rem} The 1-form $\Omega$ is neither exact nor closed in general. However, from the proof is clear that, locally, $\Omega$ can be chosen exact, so that $\Omega=dt$ for some smooth $t:U (\subset M) \rightarrow \R$. In this case $t$ is naturally a {\em temporal function} for the restriction $\C_U$ of the cone structure to $U$. 
\end{rem}

 Any pair $(\Omega, T)$ associated with $\C$ according to Lemma \ref{l_2.1} yields a natural  splitting of $TM=$ span$(T)\oplus $ 
$\ker$ $\Omega$ with  projection $\pi: TM\rightarrow $
$\ker$ $\Omega$ determined trivially by:
\begin{equation} \label{e_OmegaDecomposition}
v_p= \Omega(v_p) \, T_p + \pi(v_p) \qquad \forall v_p\in T_pM, \; \forall p\in M.
\end{equation}

A close link between Finsler metrics and cone structures is the following.

\begin{thm}\label{p_transversality} Let  $\C$ be a cone structure. For any choice of timelike 1-form $\Omega$ and $\Omega$-unit timelike vector field  $T$, there exists a unique (smooth) Finsler metric $F$ on the vector bundle $\ker$ $\Omega\subset TM$ such that, for any $v_p\in TM\setminus
\mathbf{0}$
\begin{equation}\label{e_F}
v_p\in \C \Longleftrightarrow v_p=F(\pi(v_p))T_p+\pi(v_p).
\end{equation}

Conversely, for any {\em cone triple} $(\Omega,T,F)$ composed of a non-vanishing one-form $\Omega$, an $\Omega$-unit vector field $T$ and a Finsler metric $F$ on $\ker$ $(\Omega)$, there exists a (unique) cone structure $\C$ satisfying \eqref{e_F}; such a $\C$ will be said {\em associated with the cone triple}.
\end{thm}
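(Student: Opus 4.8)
The plan is to establish the two directions of the correspondence separately, treating the forward direction (cone structure $\to$ Finsler metric) as a pointwise construction that is then shown to be smooth via transversality, and the converse (cone triple $\to$ cone structure) as an explicit reconstruction whose regularity follows from the smoothness of $F$.

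For the forward direction, I would first fix $p\in M$ and work in the single tangent space $T_pM$, using the splitting $T_pM=\mathrm{span}(T_p)\oplus\ker\Omega_p$ from \eqref{e_OmegaDecomposition}. Since $\Omega$ is timelike, the affine hyperplane $\Omega_p^{-1}(1)=T_p+\ker\Omega_p$ meets every radial direction of $\C_p$ transversely (this is exactly the transversality extracted in Prop. \ref{p_2.3}(ii)), so $S_p:=\C_p\cap\Omega_p^{-1}(1)$ is a compact strongly convex hypersurface of that affine hyperplane, diffeomorphic to a sphere around $T_p$, by Prop. \ref{p_2.3}(iii). Each nonzero $w\in\ker\Omega_p$ then determines a unique scalar $F_p(w)>0$ such that $F_p(w)\,T_p+w\in\C_p$: geometrically, one rescales $w$ so that the point $T_p+w/F_p(w)$ lands on $S_p$. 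Defining $F_p$ this way, property \eqref{e_F} holds by construction, and one-homogeneity of $F_p$ is immediate from the conic character of $\C_p$. That the fundamental tensor of $F_p$ is positive definite is the pointwise translation of strong convexity of $S_p$: the indicatrix $F_p^{-1}(1)$ is the radial projection of $S_p$ onto $\ker\Omega_p$, and strong convexity of $\C_p$ transversal to the radial direction is precisely the condition (iv) in Def. \ref{d1} that yields a strongly convex indicatrix, so $F_p$ is a genuine Minkowski norm on $\ker\Omega_p$.

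The main obstacle, and the place where transversality (a) of Def. \ref{d_conestructure} is indispensable, is showing that $F$ depends \emph{smoothly} on $p$, i.e.\ that $F$ is a Finsler metric on the bundle $\ker\Omega$ rather than merely a fiberwise norm. Here I would argue that the graph map $\Psi:\ker\Omega\setminus\mathbf 0\to\C$, $\Psi(w_p)=F(w_p)T_p+w_p$, is a diffeomorphism onto $\C$ (restricted to the relevant directions). The inverse is the smooth map $v_p\mapsto\pi(v_p)$ from $\C$ to $\ker\Omega$; transversality guarantees that this projection is a submersion with the right fiber dimension so that it is in fact a diffeomorphism, and then $F(w_p)=\Omega(\Psi(w_p))$ inherits smoothness. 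Equivalently, one writes $\C$ locally as a regular level set $\{v: G(v)=0\}$ with $dG$ nonzero along the fibers (this is exactly what transversality provides), and solves $G(sT_p+w_p)=0$ for $s=F(w_p)$ by the implicit function theorem, the nondegeneracy hypothesis ensuring $\partial_s G\neq 0$ at the solution. Uniqueness of $F$ is forced by \eqref{e_F} itself.

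For the converse, given a cone triple $(\Omega,T,F)$ I would simply \emph{define} $\C$ by the right-hand side of \eqref{e_F}, namely $\C=\{F(w)\,T_p+w: w\in\ker\Omega_p\setminus\{0\},\ p\in M\}$, and verify it is a cone structure. Smoothness of $\C$ as an embedded hypersurface of $TM$ follows because it is the image of $\ker\Omega\setminus\mathbf 0$ under the smooth embedding $w\mapsto F(w)T+w$, which is an immersion of the right codimension. Fiberwise, each $\C_p$ is the image of the indicatrix-graph over $\ker\Omega_p$; that it is a strong cone is checked by verifying the items of Def. \ref{d1}: conicity from one-homogeneity of $F$, salientness and convexity of the interior from positivity of $F$ together with the convexity encoded in the positive-definite fundamental tensor, and the non-radial strong convexity (iv) by translating positive definiteness of $g$ back into the second fundamental form of $\C_p$, reversing the computation used in the forward direction. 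Finally, transversality to the fibers holds because the affine hyperplane $\Omega_p^{-1}(1)$ cuts each radial direction of $\C_p$ transversely by construction, which is condition (a). Uniqueness of $\C$ is clear since \eqref{e_F} determines it set-theoretically.
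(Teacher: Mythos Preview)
Your proposal is correct and follows essentially the same route as the paper, with one difference in the mechanism for the smoothness of $F$. The paper constructs the indicatrix $\Sigma^F := (\Omega^{-1}(1)\cap \C) - T$ directly as a smooth submanifold of $\ker\Omega$ transverse to the fibers (using that both $\C$ and $\Omega^{-1}(1)$ are hypersurfaces of $TM$ transverse to the fibers and to each other), and then recovers $F$ as the first component of the inverse of the radial scaling map $\psi:(0,\infty)\times\Sigma^F \to \ker\Omega\setminus\mathbf{0}$, $(r,w)\mapsto rw$. You instead extract $F$ either as $\Omega\circ(\pi|_\C)^{-1}$ or via the implicit function theorem. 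Both are valid; the paper's version has the advantage that the transversality hypothesis (a) enters exactly once and in the most natural place.

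Two small points to tighten. First, the indicatrix $F_p^{-1}(1)$ is not the \emph{radial} projection of $S_p$ but its $\pi$-projection, i.e.\ the translation $S_p-T_p$; this matters because strong convexity is preserved under affine translation, which is what you actually use. Second, in your implicit-function-theorem argument the condition $\partial_s G\neq 0$ at the solution does not follow from condition~(a) of Def.~\ref{d_conestructure} alone: transversality only gives that $dG$ is nonzero on \emph{some} vertical direction, whereas you need it nonzero specifically on $T_p$. The missing ingredient is that $T_p\notin T_v\C_p$, which holds because $T_p$ lies in the open interior $A_p$ (a timelike vector cannot be tangent to the cone); this is the timelikeness of $T$, not condition~(a). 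The same remark applies to your claim that $\pi|_\C$ is a local diffeomorphism: the kernel of $d\pi$ in the fiber is $\mathrm{span}(T_p)$, and one needs $T_p\notin T_v\C_p$ to conclude $\ker d\pi\cap T_v\C=\{0\}$.
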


\begin{proof} Let us check that $\Sigma^F:= \pi(\Omega^{-1}(1)\cap \C )$ satisfies all the properties for being the indicatrix of the required Finsler metric. Both, $\C$ and $\Omega^{-1}(1)$ are smooth hypersurfaces of $TM$ (transversal to all the fibers of $TM$)  which intersect transversely; thus, as $\Omega$ is timelike, $\Omega^{-1}(1)\cap \C $ is an embedded $(2n-2)$-submanifold transversal to the fibers of $TM$. 
These properties are shared by $\Sigma^F$, because it is obtained as a  pointwise translation\footnote{The translation by $T$ can be regarded as a change in the zero-section for the associated affine bundle and, so, cannot affect the claimed transversality, since it is a diffeomorphism that preserves the fibers.}, namely,  $\Sigma^F= (\Omega^{-1}(1)\cap \C) -T $. Recall that, by construction,  each $\Sigma^F_p:=\Sigma^F \cap T_pM$ is a compact strongly convex hypersurface included in $\ker$ $\,(\Omega_p)$ which encloses $0_p$ and, so, it defines a (1-homogeneous)  Minkowski norm $F_p: \hbox{$\ker$ } \Omega_p$ $\rightarrow [0,+\infty)$. So, it is enough to show that $F:$ $\ker$ $(\Omega) \rightarrow [0,+\infty)$, $F(v_p):=F_p(v_p)$ for all $v_p\in $  $\ker$  $(\Omega_p), p\in M$, is smooth away from the zero section. Now, consider the map:
$$
\psi: (0,\infty) \times \Sigma^F \rightarrow \hbox{ $\ker$ }(\Omega)\setminus \mathbf{0}, \qquad (r,w)\mapsto r\cdot w .
$$
Clearly, this map is bijective and smooth. Moreover, its differential is bijective at all the points. 
 Indeed, putting $\partial_r=(1,0)\in T_{(r,w)}((0,+\infty)\times \Sigma^F$),  one has $d\psi(\partial_r)$ is proportional to the position vector and then  transversal to\footnote{For the role of tranversality, see \cite[Section 2.2]{CJS14}, especially Rem. 2.9 and the proof of Prop. 2.12.} $r\cdot\Sigma^F$.
Therefore, $\psi$ is a diffeomorphism and, by construction, its inverse satisfies 
$\psi^{-1}(v)=(F(v),v/F(v))$; thus, $F$ is smooth, as required. 

For the converse, the unit sphere bundle $\Sigma^F$ for $F$ is a smooth submanifold in $TM$ transverse to each $T_pM$, and so is its (pointwise translation) $T+\Sigma^F$ and its conic saturation $\C$. Moreover, $(T+\Sigma^F)\cap T_pM$ is strongly convex in $\Omega^{-1}(1)\cap T_pM$ for every $p\in M$  and the construction in Example \ref{ex2.4} applies.
\end{proof}

\begin{rem} It is clear from Lemma \ref{l_2.1} that  a cone structure  yields  many cone triples, while one cone triple determines a unique cone structure.  In the case that $T$ is complete and $\Omega$ exact,  
 $\Omega=dt$ for some function $t$, then $M$ splits as $\R\times S_0$, where  $S_0$ is the slice  $\{t=0\}$, $t$ becomes the projection onto the first factor and $T\equiv \partial_t$. Indeed,  the splitting is 
 $\R\times S_0\rightarrow M$, $(s,x)\rightarrow \Phi_s(x)$, where $\Phi$ is the flow of $T$, because $t(\Phi_s(x))=s$ (as $dt(T)\equiv 1)$, it is a local diffeomorphism (as $T$ and the slices of $t$ are transversal), it is injective (as no integral curve of $T$ can cross $S_0$ twice) and onto (as the integral curve of any $p\in M$ must cross $S_0$ because of the completeness of $T$).   
Notice that, locally, one can always choose an exact $\Omega$; so, around any $p\in M$, one has an analogous splitting $(t(p)-\epsilon, t(p) +\epsilon) \times N$ for  some neigborhood $N$ of $p$ in the slice $\{t=t(p)\}$ and $\epsilon>0$. 
\end{rem}

A straightforward consequence is the following. 

\begin{cor}\label{admitconest} The set of all the future-directed lightlike vectors of a classical spacetime forms a cone structure according to Def. \ref{d_conestructure}.
 Moreover,  a manifold $M$ admits a cone structure if and only if $M$ is non-compact or  its Euler characteristic  is 0. 
\end{cor}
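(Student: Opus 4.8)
The plan is to treat the two assertions separately, both by reducing to Theorem \ref{p_transversality}. For the first, I would exhibit, from the Lorentzian data, the cone triple whose associated cone structure is precisely the set of future-directed lightlike vectors; for the second, I would observe that, through Lemma \ref{l_2.1} and the converse part of Theorem \ref{p_transversality}, the existence of a cone structure on $M$ is equivalent to the existence of a nowhere-vanishing vector field on $M$, and then invoke the classical Poincar\'e--Hopf dichotomy.

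For the first assertion, let $(M,g)$ be a classical spacetime and pick a future-directed $g$-unit timelike vector field $T$ (available by time-orientability), so $g(T,T)=-1$. I would set $\Omega:=-g(T,\cdot)$; then $\Omega(T)=1$, $\Omega$ is non-vanishing, and $\Omega(v)>0$ for every future-directed causal $v$ by the reversed Cauchy--Schwarz inequality. The kernel $\ker\Omega=T^{\perp}$ is a spacelike subbundle on which $g$ restricts to a positive-definite metric, so $F:=\sqrt{g|_{T^{\perp}}}$ is a (Riemannian, hence Finsler) metric on $\ker\Omega$. Decomposing any $v$ as in \eqref{e_OmegaDecomposition}, a direct computation gives $g(v,v)=-\Omega(v)^2+F(\pi(v))^2$, so a non-zero future-directed $v$ is lightlike if and only if $\Omega(v)=F(\pi(v))$, i.e. if and only if $v=F(\pi(v))T+\pi(v)$. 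This is exactly condition \eqref{e_F} for the cone triple $(\Omega,T,F)$, and hence the cone structure produced by Theorem \ref{p_transversality} coincides with the set of future-directed lightlike vectors; in particular each fibre is automatically a strong cone, as required by Definition \ref{d_conestructure}.

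For the second assertion, suppose first that $M$ admits a cone structure $\C$. By Lemma \ref{l_2.1} there is an $\Omega$-unit timelike vector field $T$, which is nowhere zero (timelike vectors lie in the cone domain $A$ and $0\notin A$). Conversely, given any nowhere-vanishing vector field $X$, I would fix an auxiliary Riemannian metric $g_R$, normalize so that $g_R(X,X)\equiv 1$, and set $T:=X$, $\Omega:=g_R(X,\cdot)$, $F:=\sqrt{g_R|_{\ker\Omega}}$. Then $\Omega$ is non-vanishing with $\Omega(T)\equiv 1$, $\ker\Omega=X^{\perp}$ is a rank-$(n-1)$ subbundle, and $F$ is a Finsler metric on it; thus $(\Omega,T,F)$ is a cone triple, and Theorem \ref{p_transversality} yields a cone structure. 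Therefore $M$ admits a cone structure if and only if it admits a nowhere-vanishing vector field. The proof is then completed by the classical fact that a manifold carries a nowhere-vanishing vector field precisely when it is non-compact or has vanishing Euler characteristic (Poincar\'e--Hopf for the closed case, together with the fact that open manifolds always admit one).

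The argument is essentially a bookkeeping exercise once Theorem \ref{p_transversality} is in hand, so I do not anticipate a genuine obstacle; the only points requiring care are the routine verifications that the constructed data are legitimate cone triples (non-vanishing $\Omega$, $\Omega(T)\equiv 1$, positive-definite $F$) and the precise invocation of the topological dichotomy, which is the single non-elementary ingredient but is entirely classical.
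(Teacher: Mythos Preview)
Your proposal is correct and follows essentially the same route as the paper: for the first assertion you build the cone triple $(\Omega,T,F)=(-g(T,\cdot),T,\sqrt{g|_{T^\perp}})$ and invoke Theorem~\ref{p_transversality}, exactly as the paper does; for the second you reduce to the existence of a nowhere-vanishing vector field via Lemma~\ref{l_2.1} and the Poincar\'e--Hopf dichotomy. The only cosmetic difference is that for the converse of the second assertion the paper's main text builds a time-oriented Lorentz metric and appeals to the first assertion, whereas you construct the cone triple directly from an auxiliary Riemannian metric---but this is precisely the alternative the paper itself records in a footnote, so the approaches coincide.
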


\begin{proof}  For the first assertion,  the   Lorentzian metric $g$  of a spacetime  admits a  unit future-directed timelike vector field $T$;
so, the set of all the future-directed $g$-lightlike vectors is the cone structure associated with the triple $(\Omega,T,F)$, where $\Omega$ is the 1-form $g-$associated with $T$ and $F$ is the  norm of the  Riemannian metric obtained as  the  restriction of $g$ on  $\ker$ $(\Omega)=T^\perp$ (the subbundle $g-$orthogonal  to $T$). 

 For the last assertion,    the existence of a vector field $T$ on $M$  which is  non-zero  everywhere becomes equivalent to either the condition on the  Euler characteristic or the non-compactness of $M$, \cite[Prop. 5.37]{ON} (see also \cite{Kokkendorff}). 
 Then, the implication to the right follows because Lemma \ref{l_2.1} ensures  the existence of such a $T$ and, for the converse, construct a time-oriented Lorentz metric \cite[Prop. 5.37]{ON} and consider the set of all its future-directed lightlike vectors\footnote{ Alternatively, use Th. \ref{p_transversality}, namely: take a non-vanishing vector field $T$, construct any auxiliary Riemannian metric $g_R$ on $M$, define $\Omega$ as the 1-form $g_R$-associated with $T$, and choose $F(v)$ as the restriction of $\sqrt{g_R(v,v)}$ to $v\in $  $\ker$ $(\Omega)$.  }.   
\end{proof}

Given two cone structures $\C, \C'$ on $M$, denote $\C  \preceq  \C'$  if the cone of $\C'$ is included in the one of $\C$ (say, $\C'\subset \bar A$). So, we have  the following simple consequence of Th. \ref{p_transversality}.

\begin{cor}\label{c_abriendo_conos}
 Given a  cone structure $\C$, there exist two Lorentzian metrics $g_1, g_2$ such that their cone structures  $\C_1, \C_2$ satisfy 
  $\C_1 \preceq \C \preceq \C_2 $. 

\end{cor}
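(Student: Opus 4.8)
The plan is to reduce everything to the cone triple description of Theorem~\ref{p_transversality}. By Lemma~\ref{l_2.1} I would first fix a timelike $1$-form $\Omega$, an $\Omega$-unit timelike vector field $T$, and the associated Finsler metric $F$ on the bundle $\ker\Omega$, so that $\C$ is exactly the set $\{v_p=F(\pi(v_p))T_p+\pi(v_p)\}$ of \eqref{e_F}, with the splitting \eqref{e_OmegaDecomposition}. The key observation is that a Lorentzian cone is precisely the case of a cone triple whose fibre norm is \emph{Riemannian}: given any Riemannian metric $\hat h$ on the vector bundle $\ker\Omega$, the symmetric form $g=-\Omega\otimes\Omega+\hat h(\pi(\cdot),\pi(\cdot))$ on $TM$ is a smooth Lorentzian metric (negative definite on $\mathrm{span}(T)$, positive definite on $\ker\Omega$, and these are $g$-orthogonal since $g(T,u)=0$ for $u\in\ker\Omega$), it makes $T$ future timelike, and its future lightlike vectors are exactly $\{v_p=|\pi(v_p)|_{\hat h}\,T_p+\pi(v_p)\}$. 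Hence, by the uniqueness in Theorem~\ref{p_transversality}, the cone structure of $g$ is the one associated with the triple $(\Omega,T,|\cdot|_{\hat h})$.

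Thus it suffices to find two Riemannian metrics $\hat h_1,\hat h_2$ on $\ker\Omega$ whose norms satisfy $|\cdot|_{\hat h_1}\le F\le|\cdot|_{\hat h_2}$ fibrewise. Indeed, a lightlike vector of $\C$ has the form $v=F(u)T+u$ with $u=\pi(v)\in\ker\Omega$ and $\Omega(v)=F(u)$; the inequality $|u|_{\hat h_1}\le F(u)=\Omega(v)$ then shows that $v$ is causal for $g_1$, i.e. $\C\subset\bar A_1$ and $\C_1\preceq\C$. Symmetrically, a lightlike vector $w=|u|_{\hat h_2}T+u$ of $g_2$ satisfies $\Omega(w)=|u|_{\hat h_2}\ge F(u)=F(\pi(w))$, so $w$ is causal for $\C$, giving $\C_2\subset\bar A$ and $\C\preceq\C_2$.

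To produce such metrics I would fix an arbitrary auxiliary Riemannian metric $h$ on $\ker\Omega$ (built by a partition of unity) and compare $F$ with $|\cdot|_h$ pointwise. Since each indicatrix is compact, the functions $c(p):=\min_{|u|_h=1}F_p(u)$ and $C(p):=\max_{|u|_h=1}F_p(u)$ are continuous and strictly positive, and $c(p)\,|u|_h\le F_p(u)\le C(p)\,|u|_h$ for every $u\in\ker\Omega_p$. I would then choose smooth positive functions $\tilde c\le c$ and $\tilde C\ge C$ on $M$ and set $\hat h_1=\tilde c^{\,2}h$ and $\hat h_2=\tilde C^{\,2}h$, which are genuine Riemannian metrics realizing the desired inequalities; the Lorentzian metrics $g_1,g_2$ obtained from them as in the first paragraph finish the argument.

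The main obstacle I anticipate is bookkeeping rather than depth: one must keep the direction of $\preceq$ straight (a wider cone corresponds to a \emph{smaller} fibre norm, since the ball $\{F\le 1\}$ grows as $F$ decreases), and one must upgrade the merely continuous comparison functions $c,C$ to smooth bounds $\tilde c,\tilde C$ so that $g_1,g_2$ are genuinely smooth Lorentzian metrics. This last smoothing is routine (choose lower, resp. upper, constants on the relatively compact members of a locally finite cover and patch them by a subordinate partition of unity). Everything else is the verification, already sketched, that the two Lorentzian cones sandwich $\C$.
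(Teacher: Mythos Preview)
Your proposal is correct and follows essentially the same route as the paper: pick a cone triple $(\Omega,T,F)$ for $\C$, take an auxiliary Riemannian metric $h$ on $\ker\Omega$, and rescale it conformally by positive functions so that the resulting Euclidean norms bound $F$ from above and below; the associated Lorentzian metrics then have cones sandwiching $\C$. You are simply more explicit than the paper about the Lorentzian metric $g=-\Omega\otimes\Omega+\hat h(\pi(\cdot),\pi(\cdot))$, about the direction of the $\preceq$ inequalities, and about smoothing the comparison functions $c,C$; the paper absorbs all this into the single sentence on choosing conformal factors $e^{2u_i}$.
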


\begin{proof} Take a cone triple $(\Omega, T,F)$ and any Riemannian metric $h_R$ on  $\ker$ $(\Omega)$.  Multiply $h_R$ by some big enough (resp. small enough) conformal factor $e^{2u_1}$ (resp. $e^{2u_2}$) such that the unit sphere bundle  of $h_i:=e^{2u_i}h_R$ is included in  (resp. includes)   the  indicatrix of $F$ pointwise.  Then each $\C_i$  is just the cone structure determined by 
	$(\Omega, T, F_i=\sqrt{h_i})$.
\end{proof}

\begin{rem}\label{r_fathi}
We are focusing on smooth cone structures instead of more general  ones. 
 Indeed,  the main differences of our definition of cone structure and the one of Fathi and Siconolfi in \cite{FS12} are the differentiability of $\C$  and the strong convexity of each $\C_p$, which are not required in that 
reference\footnote{More general cone structures  in \cite{Suhr}  drop  continuity and  allow  singular cones.}. 
However, the notion of cone triple would make sense for such general cone structures and cone triples would characterize them just taking into account that the Finsler metric $F$ on  $\ker$ ($\Omega$) would become now a continuous distribution of norms whose regularity would depend on the assumptions of regularity and convexity  for the cones. 
\end{rem}

\section{Finsler spacetimes}\label{s3}
\subsection{Lorentz-Minkowski norms and their cones}  Let us start  with  notions  at the level of a vector space $V$,  consistently with  \cite{JavSan11}. 
\begin{defi} \label{d_LMnorm}
A pseudo-Minkowski norm $L:  A_0  \subset V\setminus 0 \rightarrow\R^+$ is  {\it Lorentz-Minkowski} if   $A_0$ is 
 non-empty, connected,  conic and open,  and the fundamental tensor in  \eqref{fundtens} has index $n-1$;  in this case,    when there is no possibility of confusion,  the one-homogeneous function $\F=\sqrt{L}$ will  be also considered  and called  {\em Lorentz-Minkowski norm}.  

Moreover,  $L$ is  {\em properly Lorentz-Minkowski} if, in addition, the  topological boundary $\C_0$ of $A_0$ in $V\setminus 0$  is smooth (i.e., 
 $\bar A_0:= A_0\cup \C_0$  is a smooth manifold 
with boundary)  and $L$  can be smoothly extended as zero to     $\C_0$   with non-degenerate fundamental tensor  (then, the same letters $L,\F$ will denote such extensions).  In this case, we will also write $L:\bar A_0\rightarrow \R$  and, when required,   the continuous extension $L(0)=0$ will also be assumed. 
\end{defi}

\begin{rem}
Observe that there are some cases  in the bibliography where  the pseudo-Finsler metric has index $n-1$, but it cannot be extended smoothly to the boundary  (see for example \cite[Prop. 2.5 (iii) (b)]{CJS14}  or a translation of a Lorentzian metric, which, naturally, can be continuously extended  to the boundary as zero, but not smoothly, \cite[Prop. 2.9]{JV}  or the alternative definitions in Appendix \ref{s_a1}).  Here,  we will be  interested in the proper Lorentz-Minkowski case. 
The next proposition will show, in particular, that $\C_0$ must be a  (salient, strongly convex, with convex interior)  cone  for any properly Lorentz-Minkowski norm.   
\end{rem}
The following lemma will provide a criterion for the smoothness of $\C$ and  it will be also useful for other purposes.   

\begin{lemma}\label{l(i)}
 Let $ A^*_0 \subset V$ be a  non-empty   connected conic open subset  and $L:  A^*_0 \rightarrow \R$  be a  pseudo-Minkowski norm with index $n-1$. Assume that $A_0:=L^{-1}((0,\infty ))$ is connected and  its topological closure  $\bar A_0$ in $V\setminus\{0\}$ is included in  $ A^*_0$.
 Then:
 $$g_v(v,v)=L(v), \qquad dL_v(w)=2 g_v(v,w), \qquad \forall v\in  \bar A_0, \; \forall w\in V,$$
  where $g_v$ was defined in \eqref{fundtens}. 
Therefore,  $\bar A_0$  is a smooth manifold with boundary $\C_0=L^{-1}(0)   \setminus \{0\}$  (and so $L|_{A_0}$ is a properly Lorentz-Minkowski norm)  and the {\em indicatrix} $\Sigma_0:=L^{-1}(1) \subset A_0$ is a smooth hypersurface. 
\end{lemma}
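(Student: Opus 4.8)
The plan is to derive the two pointwise identities purely from the positive $2$-homogeneity of $L$, and then read off every regularity statement from them together with the non-degeneracy of the fundamental tensor $g_v$.

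First I would exploit Euler's theorem. Since $L$ is positively homogeneous of degree $2$ on the conic set $A^*_0$, differentiating $L(\lambda v)=\lambda^2 L(v)$ in $v$ shows that $dL$ is positively homogeneous of degree $1$, i.e.\ $dL_{\lambda v}=\lambda\, dL_v$ for $\lambda>0$, and that $dL_v(v)=2L(v)$. To obtain the second identity I would compute $g_v(v,w)$ directly from \eqref{fundtens}: writing $g_v(v,w)=\tfrac12\,\partial_r\partial_s L((1+r)v+sw)|_{r=s=0}$ and using $\partial_s L((1+r)v+sw)|_{s=0}=dL_{(1+r)v}(w)=(1+r)\,dL_v(w)$, differentiation in $r$ at $r=0$ gives $g_v(v,w)=\tfrac12\,dL_v(w)$, which is exactly $dL_v(w)=2g_v(v,w)$. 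Setting $w=v$ and invoking Euler then yields $g_v(v,v)=\tfrac12 dL_v(v)=L(v)$. Both identities hold wherever $L$ is smooth, in particular on $\bar A_0\subset A^*_0$.

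Next I would use the non-degeneracy of $g_v$ to locate the critical points of $L$. If $v\neq 0$ satisfies $L(v)=0$ and $dL_v=0$, then $dL_v(\cdot)=2g_v(v,\cdot)$ forces $g_v(v,\cdot)\equiv 0$, and non-degeneracy of $g_v$ gives $v=0$, a contradiction. Hence $dL_v\neq 0$ at every point of $L^{-1}(0)\setminus\{0\}$, so $0$ is a regular value of $L$ on $A^*_0$ and $L^{-1}(0)\setminus\{0\}$ is a smooth embedded hypersurface across which $L$ changes sign. The level $1$ is even easier: for $L(v)=1$ one has $dL_v(v)=2L(v)=2\neq 0$, so $dL_v\neq 0$ and $\Sigma_0=L^{-1}(1)\subset A_0$ is a smooth hypersurface.

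Finally I would identify the boundary and assemble the manifold-with-boundary structure. Every $v\in\C_0=\bar A_0\setminus A_0$ is a nonzero limit of points of $A_0=\{L>0\}$, so $L(v)\ge 0$ by continuity, while $v\notin A_0$ gives $L(v)\le 0$; hence $L(v)=0$ and $\C_0\subset L^{-1}(0)\setminus\{0\}$. Conversely, since $dL_v\neq 0$ at each $v\in L^{-1}(0)\setminus\{0\}$, $L$ is a submersion near $v$ and takes positive values arbitrarily close to $v$, so $v\in\bar A_0$ and, not lying in $A_0$, belongs to $\C_0$; thus $\C_0=L^{-1}(0)\setminus\{0\}$. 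Using $L$ as a local defining function (so that $\{L\ge 0\}$ is a half-space model at each boundary point) shows $\bar A_0$ is a smooth manifold with boundary $\C_0$, and since $L$ is already smooth on $A^*_0\supset\bar A_0$, vanishes on $\C_0$, and has non-degenerate $g_v$ there, $L|_{A_0}$ is properly Lorentz-Minkowski in the sense of Def.~\ref{d_LMnorm}. The only point requiring real care — hence the step I would treat most carefully — is the reverse inclusion $L^{-1}(0)\setminus\{0\}\subseteq\bar A_0$, namely that no zero of $L$ is isolated from $A_0$; this is exactly what the regular-value property $dL_v\neq 0$ guarantees, so once the two identities and the non-degeneracy of $g_v$ are in place, everything else is routine.
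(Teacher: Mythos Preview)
Your proof is correct and follows essentially the same approach as the paper: derive the Euler identities from positive $2$-homogeneity, then use the non-degeneracy of $g_v$ to conclude that $0$ and $1$ are regular values of $L$ (away from the origin), which yields the smoothness of $\C_0$ and $\Sigma_0$. Your version is simply more detailed and self-contained---in particular, you spell out the boundary identification $\C_0=L^{-1}(0)\setminus\{0\}$ that the paper leaves implicit.
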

\begin{proof}
The equalities follow   for any pseudo-Minkowski norm as in the case of Minkowski norms and Finsler metrics  (see for example \cite[Prop. 2.2]{JavSan11}).  As a consequence, $0$ and 1 are regular values of $L$ (up to the origin) and 
$\C_0, \Sigma_0$ become smooth.  
\end{proof}

\begin{prop}\label{propiedades}
 Let  $L:\bar A_0\subset V\rightarrow \R$  be a  properly  Lorentz-Minkowski norm and  $\C_0$, $\Sigma_0$, as above.
 Then:
 \begin{enumerate}[(i)]
 \item For any $v\in \Sigma_0$,  the restriction   of the fundamental tensor  $g_v$ to $T_v\Sigma_0$ 
 (which can regarded as the $g_v$-orthogonal space to $v$)  is negative definite. 
 \item $\Sigma_0$ is  connected and  strongly convex with respect to the position vector. 
   Moreover,  let $S$ be any ellipsoid\footnote{ We consider ellipsoids as they are intrinsic to the vector space structure of $V$; alternatively, spheres for the auxiliary Euclidean scalar product $h_V$ can  also be considered. }
 centered at $0$, and consider the  functions
$\lambda: A_0\cap S\rightarrow \R^+$, $\lambda(v):=1/\F(v)$ and $\phi:A_0\cap S\rightarrow \Sigma_0$, $\phi(v)=\lambda(v)v$.  Then,
$\Sigma_0$ is asymptotic to $\C_0$ in the  sense that  $\C_0$ is conic and  $\lambda(v)\rightarrow +\infty$ whenever $v\rightarrow  \C_0 \cap S$.
 \item For every $v\in \C_0$,  the tangent space $T_v\C_0$ is the $g_v$-orthogonal space to $v$ and the restriction of  $g_v$ to $T_v\C_0$  is negative semi-definite,  being the direction of $v$  its only degenerate direction.
 \item  The second fundamental form $\sigma^\xi$ of $\C_0$  with respect to any vector  $\xi\in T_vV$ 
   pointing to $A_0$   is positive semi-definite  with radical spanned by $v$.
 \item Given any smooth extension of $L$ with non-degenerate fundamental tensor,   
 its domain contains an open subset $A_0^* \supset   \bar A_0  $ 
such that  $L<0$ in $A_0^*\setminus \bar A_0$ (for computations around $\C_0$, such a subset can be regarded as the domain of the extension of $L$ to $\C_0$.) 
 \item  $\C_0$ is a  strong  cone (according to Def. \ref{d1}) with $\C_0$-interior $A_0$. 
 
 \item  Given any $v\in \Sigma_0$,  the intersection $T_{v}\Sigma_0\cap\C_0$ is a strongly convex hypersurface in $T_{v}
\Sigma_0$ diffeomorphic to a sphere. 
 Given any $v\in \C_0$, the 
intersection  $T_v\C_0 \cap  \bar A_0$ is the half-line $\{\lambda v: \lambda \geq  0\}$. 
 \end{enumerate}
 \end{prop}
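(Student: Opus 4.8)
The backbone of the whole proposition is the pair of identities from Lemma~\ref{l(i)}, $g_v(v,v)=L(v)$ and $dL_v(w)=2g_v(v,w)$, together with the fact that, since $L$ is a genuine function on the vector space $V$, its ordinary Hessian at $v$ is exactly $\mathrm{Hess}_vL=2g_v$ (this is literally the definition \eqref{fundtens}). Granting these, the two linear-algebra items are immediate. For (i), on $\Sigma_0$ one has $g_v(v,v)=L(v)=1>0$ and $T_v\Sigma_0=\ker dL_v=\{w:g_v(v,w)=0\}$; as $g_v$ has index $n-1$ and $v$ is a positive (timelike) direction, its orthogonal complement is negative definite. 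For (iii), on $\C_0$ one has $g_v(v,v)=L(v)=0$, so $v$ is $g_v$-null and $v\in T_v\C_0=\{w:g_v(v,w)=0\}$; the standard fact that the $g_v$-orthogonal complement of a null vector in a Lorentzian space is negative semi-definite with radical exactly $\mathrm{span}(v)$ gives the statement.

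Parts (ii), (iv), (v) are then computations built on the same identities. For (ii), two-homogeneity forces each ray of $A_0$ to meet $\Sigma_0$ exactly once, at $v/\F(v)$, so $\phi$ is a diffeomorphism of the connected conic set $A_0\cap S$ onto $\Sigma_0$ (hence $\Sigma_0$ is connected); the second-order behaviour of $\{L=1\}$ is governed by $\mathrm{Hess}_vL=2g_v$ restricted to $T_v\Sigma_0$ --- negative definite by (i), with $dL_v(v)=2L(v)>0$ fixing the convex side --- which is strong convexity with respect to the position vector, and the asymptotics are just $\lambda(v)=1/\F(v)\to+\infty$ as $L(v)\to0$. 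For (iv), differentiating $L\circ\gamma\equiv0$ twice along a curve $\gamma\subset\C_0$ with $\gamma'(0)=w\in T_v\C_0$ gives $dL_v(\gamma''(0))=-\mathrm{Hess}_vL(w,w)=-2g_v(w,w)$, whence the second fundamental form with respect to a transversal $\xi$ is $\sigma^\xi(w,w)=-2g_v(w,w)/dL_v(\xi)$; taking $\xi$ pointing into $A_0=\{L>0\}$ makes $dL_v(\xi)>0$, and (iii) makes $-g_v(w,w)\ge0$ with equality exactly on $\mathrm{span}(v)$, giving positive semidefiniteness with radical $\mathrm{span}(v)$. For (v), $0$ is a regular value of any nondegenerate smooth extension (since $dL_v=2g_v(v,\cdot)\ne0$ on $\C_0$), so $\C_0$ locally separates $A_0=\{L>0\}$ from $\{L<0\}$, and $A_0^*:=A_0\cup\mathcal{N}$, for a thin enough two-sided collar $\mathcal{N}$ of $\C_0$ inside the extension domain, does the job.

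The genuinely hard part is (vi): upgrading the \emph{infinitesimal} strong convexity from (iv) to the \emph{global} statement that $\C_0$ is a strong cone in the sense of Def.~\ref{d1}, i.e. that $A_0$ is salient with convex interior. Conicity of $\C_0$ is clear from homogeneity and condition (iv) of Def.~\ref{d1} is exactly part (iv) above, so only salience and convexity remain. My plan is to reduce to Lemma~\ref{c_strong}, for which (beyond the smooth-manifold-with-boundary property from Lemma~\ref{l(i)}) I need $A_0$ salient and an affine hyperplane transverse to all radial directions. I would produce both at once by fixing $v_0\in\Sigma_0$ and using the covector $\omega_0:=g_{v_0}(v_0,\cdot)$, which satisfies $\omega_0(v_0)=L(v_0)=1$ and has kernel $T_{v_0}\Sigma_0$ on which $g_{v_0}$ is negative definite; the key claim is the half-space inclusion $\bar A_0\setminus\{0\}\subset\{\omega_0>0\}$, which simultaneously forbids antipodal pairs (salience) and makes $\omega_0^{-1}(1)$ transverse to every radial direction of $\C_0$. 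Establishing this inclusion is the crux of the whole proposition; I expect to argue it by a connectedness argument on $\bar A_0$ --- it holds at $v_0$, and reaching $\{\omega_0=0\}$ would violate the conicity and infinitesimal convexity of $\C_0$ --- leaning if necessary on the convexity theory quoted in Rem.~\ref{r2.2}(b). Once Lemma~\ref{c_strong} applies, its conclusion (b) converts the strong convexity of the compact slice $S_0=\omega_0^{-1}(1)\cap\C_0$ (a restriction of (iv)) into the assertion that $\C_0$ is a strong cone with interior $A_0$, and yields the convexity of $\bar A_0$ and $\bar A_0\cup\{0\}$.

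Finally, (vii) reads off from (vi) as two supporting-hyperplane facts for the convex cone. For the first, I take $\Pi$ to be the affine tangent hyperplane $v+T_v\Sigma_0=\{\,\omega_v=1\,\}$ of $\Sigma_0$ at $v$, where $\omega_v=g_v(v,\cdot)$; since $\omega_v(v)=L(v)=1$ we have $0\notin\Pi$, and $\Pi$ is transverse to the radial directions of $\C_0$, so Prop.~\ref{p_2.3}(iii) (equivalently Lemma~\ref{c_strong}) exhibits $\Pi\cap\C_0$ as a strongly convex hypersurface diffeomorphic to a sphere. For the second, with $v\in\C_0$ the vector hyperplane $T_v\C_0=\{w:g_v(v,w)=0\}$ is the supporting hyperplane of the convex cone $\bar A_0$ at the boundary point $v$, and the fact that the radical of $\sigma^\xi$ in (iv) is spanned by $v$ forces the contact set $T_v\C_0\cap\bar A_0$ to be exactly the ray $\{\lambda v:\lambda\ge0\}$.
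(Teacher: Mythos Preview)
Your treatment of (i)--(v) and the second assertion of (vii) is correct and matches the paper's argument essentially line by line. For (vi) your overall strategy is also the paper's: fix $v_0\in\Sigma_0$, show the affine tangent hyperplane $\Pi=\omega_0^{-1}(1)=v_0+T_{v_0}\Sigma_0$ is transverse to every radial direction of $\bar A_0$, then invoke Lemma~\ref{c_strong}. The genuine gap is exactly where you place it, the half-space inclusion $\bar A_0\setminus\{0\}\subset\{\omega_0>0\}$, and your sketched connectedness argument does not close it. To run an open-and-closed argument on $\bar A_0$ you would need to rule out a limit $v\in\bar A_0$ with $\omega_0(v)=0$, but neither conicity nor the purely \emph{infinitesimal} convexity of $\C_0$ from (iv) forbids this a priori: you do not yet know $A_0$ is convex, and Rem.~\ref{r2.2}(b) itself warns that the general infinitesimal-to-global convexity theorem does not apply directly here because $\bar A_0$ is not complete and $\partial A_0$ is singular at $0$.

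The paper fills this gap not via (iv) but via (ii). For each direction $w\in T_{v_0}\Sigma_0$ it restricts to the $2$-plane $P=\mathrm{span}\{v_0,w\}$; there $L|_P$ is again Lorentz--Minkowski, and by (ii) its indicatrix $\Sigma_P$ is a strongly convex curve asymptotic to the two half-lines $\C_0\cap P$. A direct monotonicity argument on the slope of $\Sigma_P$ in polar coordinates (the slope is $0$ at $v_0$, strictly increases by strong convexity, and stays bounded since $\Sigma_P$ is never radially tangent) shows that the affine ray $v_0+\lambda w$, $\lambda>0$, meets $\C_0\cap P$ transversely. Doing this for every $w$ yields simultaneously the transversality needed for Lemma~\ref{c_strong} (hence (vi)) and the diffeomorphism of a sphere onto $\Pi\cap\C_0$ (hence the first assertion of (vii)). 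So the missing idea in your proposal is this $2$-plane reduction leveraging the strong convexity of the \emph{indicatrix} $\Sigma_0$, rather than of the boundary $\C_0$.
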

\begin{proof}
 For $(i)$,  Lemma \ref{l(i)} implies that 
$T_v\Sigma_0$   is given by the $g_v$-orthogonal vectors to $v$. In particular, as $g_v$ has index $n-1$ and $g_v(v,v)=L(v)>0$, the fundamental tensor is negative definite in $T_v\Sigma_0$. 

 For part $(ii)$, 
recall first that, easily, 
\begin{equation}\label{secfundform}
g_v(X,X)=- 
\sigma^v(X,X)v(L)/ 2, 
\end{equation}
where $\sigma^v$ is the second fundamental form of $\Sigma_0$ with respect to the position vector  $v$  and $X\neq 0$ is a tangent vector to $\Sigma_0$  at  $v$ (see \cite[Eq. (2)]{AaJa16} and also \cite[Eq (2.5)]{JavSan11}). So, the strong convexity of $\Sigma_0$ follows from \eqref{secfundform}, 
 taking into account that  its left hand side is negative by  part $(i)$ and  $v(L)>0$ by  positive homogeneity.  Now observe that by positive homogeneity and using that $S$ and $\Sigma_0$ are transversal to the radial directions, $\phi$ is a diffeomorphism. Moreover,  $A_0\cap S$ is connected because,  otherwise, $A_0$ would not be; as a consequence, $\Sigma_0$ is also connected. 
By homogeneity, $\C_0$  is conic, and 
 $\Sigma_0$ is asymptotic to $\C_0$ because, otherwise, $L$ could not be extended (not even continuously) by 0 to $\C_0$.

For part $(iii)$, repeat the reasoning in part $(i)$ taking into account that if $L(v)=g_v(v,v)=0$, then  $v$ is a  lightlike vector of $g_v$  and its $g_v$-orthogonal space  must contain the direction spanned by $v$ (recall  Lemma \ref{l(i)}); thus, this direction must  be the unique degenerate direction allowed by   the Lorentzian signature $(+,-,\ldots,-)$ of $g_v$ (see \cite[Lemma 5.28]{ON}). 
  
 For $(iv)$, reasoning as in    \eqref{secfundform}, one has 
\begin{equation}\label{secondfundxi}
g_v(X,X)=-\sigma^\xi(X,X)\xi(L) /2.
\end{equation}
 So, the result holds from   $\xi(L)=2g_v(v,\xi)>0$. To prove the latter,  first,  $\xi(L)\geq 0$ since $L$ is zero in the boundary  with  $L>0$ in $A_0$;  then,  the equality cannot hold 
because $\xi$ is not tangent to $\C_0$ (recall part $(iii)$). 

Part $(v)$ is a consequence of 
Lemma \ref{l(i)}, since now $-\xi(L)=-2g_v(v,\xi)<0$ (as in the reasoning of part $(iv)$) for any  $-\xi$  pointing away from $A_0$.

 For the remainder, notice that part  {\em (vi)} follows if there exists  a hyperplane $\Pi\not\ni 0$ which is  crossed transversely by all the radial half-lines of $\bar A_0$ (use then the last assertion of Lemma~\ref{c_strong}, taking into account that  by  part {\em (iii)} above,  $\Pi\cap \C_0$ is strongly convex). We are going to prove that such a $\Pi$ can be
chosen as $T_v \Sigma_0$ for any $v\in \Sigma_0$, which proves additionally the first assertion in part $(vii)$ (by using again Lemma~\ref{c_strong}). 
 Take $w\in T_v\Sigma_0$  and consider the $2$-plane $P={\rm span}\{v,w\}$. Observe that $L|_P$ is also Lorentz-Minkowski  and its indicatrix $\Sigma_P$ is a strongly convex curve by part $(ii)$. If we choose a positive definite scalar product such that  $w$ and $v$ are orthonormal, and coordinates $(x,y)$ in this basis, it turns out that $\Sigma_P$ can be parametrized in polar coordinates in terms of the angle as it is not tangent to the radial lines. Moreover, when $\theta=\pi/2$ its slope is zero as $w$ is tangent to it, and when $\theta$ decreases, because of the strong convexity, the slope of $\Sigma_P$ increases. As $\Sigma_P$ cannot be tangent to radial lines, by continuity, its slope remains below some $\alpha>0$. This implies that the ray $v+\lambda w$, $\lambda>0$, meets the cone $\C$ transversely and this gives a diffeomorphism from the sphere to $\Sigma_0\cap \C$ as required.

 For the  last assertion in part {\em (vii)},
the conicity of $\C_0$ implies that the radial line spanned by $v$   lies in the tangent space ($v\in \ker dL_v$) 
and part $(iv)$ of Def. \ref{d1},  that no more points can appear in the intersection.
\end{proof}

\subsection{Lorentz-Finsler metrics}\label{s3.2}
In the literature there are several definitions of Finsler spacetimes. Let us give first the definition which, from our viewpoint,  has better mathematical properties.
\begin{defi}\label{finslerst}
Let $M$ be a manifold and $TM$ its tangent bundle. Let $A\subset TM\setminus 0$ be  a conic domain (according to Def. \ref{d_ classical Finsler}) such that     its closure $\bar A$ in $TM \setminus 0$  is an embedded smooth manifold with boundary.
Let  $\C  \subset TM \setminus 0$  be its  boundary  and $L:A\rightarrow \R^+$ a smooth function which can be smoothly extended as zero to $\C$ satisfying,  for all $p\in M$, that 
$$L_p:=L|_{A_p}, \qquad \hbox{where} \qquad A_p:=A\cap T_pM,
$$ 
is a properly Lorentz-Minkowski norm.   
  Then, $L$    
will be called a {\em Lorentz-Finsler metric}, and $(M,L)$  a \emph{Finsler spacetime};  when necessary, $L$ will be assumed  continuously extended to
 the zero section $\mathbf{0}\subset TM$ (and denoted with the same letter). 
\end{defi}

\begin{rem} \label{r3.6}  (1) 
Even if $A_p$ is not required to be convex and salient,  both properties follow from part $(vi)$ of Prop. \ref{propiedades} (in particular,   the definition above  coincides with the one given in \cite{JavSan14}).

 (2)  As $L$ is smooth on $\C$ with non-degenerate fundamental tensor,   $L$ can be extended to an open conic subset $A^*$  containing $\bar A$ such that the fundamental tensor of $L$ has  index $n-1$ on $A^*$ and  $L<0$  in $A^*\setminus \bar A$  (this is just a  straightforward generalization of part $(v)$ of Prop. \ref{propiedades}; say, the local result would follow trivially, and the global one by using a partition of  unity).  Clearly,  such an  $A^*$ can also be chosen as a conic domain.

 Even if $L$ is defined beyond $\bar A$, our definition of Lorentz-Finsler metrics prescribes $A$ and, then, the cone structure $\C$. So,   
 all the concepts of Causality Theory in  Section \ref{s2} apply here and  Finsler spacetimes are always time-oriented. 
\end{rem}

\begin{cor}\label{c_3.6}  
If $L: A\rightarrow  \R^+$ is a Lorentz-Finsler metric, then  the boundary $\C$ of $A$ in $TM\setminus \bf 0$  is a cone structure with  cone domain   $A$  
($\C$ and $A$  will be called {\em  associated with} $L$, or just  
 the {\em cone structure} and {\em cone domain}  of $L$). 
\end{cor}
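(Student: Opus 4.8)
The plan is to verify the two defining conditions of Definition \ref{d_conestructure} for the hypersurface $\C = \partial A$, namely the fiberwise strong cone property (b) and the transversality (a), drawing respectively on Proposition \ref{propiedades} and on Lemma \ref{l(i)}.

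Condition (b) is essentially immediate fiberwise. By hypothesis, for each $p \in M$ the restriction $L_p = L|_{A_p}$ is a properly Lorentz-Minkowski norm, and $\C_p := T_pM \cap \C$ is precisely the topological boundary of $A_p$ in $T_pM \setminus \{0\}$. Hence Proposition \ref{propiedades}(vi) applies verbatim in the fiber $T_pM$ and yields that $\C_p$ is a strong cone (in the sense of Definition \ref{d1}) with $\C_p$-interior $A_p$. This is condition (b), and simultaneously it identifies $A$ as the cone domain.

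The substantive point is the transversality condition (a). First I would invoke Remark \ref{r3.6}(2) to extend $L$ smoothly to an open conic subset $A^* \supset \bar A$ of $TM$ with $L < 0$ on $A^* \setminus \bar A$, so that near $\C$ one has $\C = L^{-1}(0)$ with $L$ smooth on the ambient $TM$. Fixing $v \in \C_p$ and identifying the vertical subspace $T_v(T_pM)$ with $T_pM$ itself, the key computation is that the restriction of the ambient differential $dL_v$ to this vertical subspace coincides with the fiberwise differential of $L_p$, which by Lemma \ref{l(i)} equals $w \mapsto 2 g_v(v, w)$. Since $v \neq 0$ and the fundamental tensor $g_v$ is nondegenerate, this is a nonzero linear functional on $T_v(T_pM)$; in particular $T_v(T_pM) \not\subset \ker dL_v = T_v\C$.

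Finally I would translate this non-containment into transversality: as $T_v\C = \ker dL_v$ has codimension one in $T_v(TM)$, the sum $T_v\C + T_v(T_pM)$ exhausts all of $T_v(TM)$ precisely when $T_v(T_pM)$ is not contained in $T_v\C$, which is exactly what the previous step establishes. Thus condition (a) holds and $\C$ is a cone structure with cone domain $A$. The only place demanding care is this last identification: recognizing that the fiberwise regularity of $L_p$ at a lightlike vector, furnished by the nondegeneracy of $g_v$ through Lemma \ref{l(i)}, is precisely what forces the ambient hypersurface $\C$ to meet each fiber transversally. Once this is seen, the geometric content of Proposition \ref{propiedades} supplies the rest with no additional work.
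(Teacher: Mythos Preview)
Your proof is correct and follows essentially the same route as the paper's: Proposition \ref{propiedades}(vi) handles the fiberwise strong cone condition, and Lemma \ref{l(i)} (via the identity $dL_v|_{T_v(T_pM)} = 2g_v(v,\cdot)$ together with the nondegeneracy of $g_v$) yields transversality. Your version merely spells out the transversality step in more detail and invokes Remark \ref{r3.6}(2) to work on an ambient open set, which is a minor expository addition rather than a different argument.
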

\begin{proof}
  By part {\em (vi)} of Prop. \ref{propiedades}, each $\C_p=\C \cap T_pM$, $p\in M$, is a strong cone, while 
 Lemma \ref{l(i)} implies that  $\C$ is transverse to all $T_pM$. 
\end{proof}

 In classical Beem's definition \cite{Beem70}, Lorentz-Finsler
metrics are  defined in the whole tangent bundle. 
Clearly,  our results will be applicable to such metrics  whenever a cone structure is fixed.  Implicitly, this assumes time-orientability;  more precisely: 

\begin{prop}\label{p_beem} 
Let $A^*$ be a domain of $TM$ such that each $A^*_p := A^*\cap T_pM$ is conic and non-empty. Let $L: A^*\rightarrow \R$ be a two-homogeneous smooth function whose fundamental tensor $g$ (as in  \eqref{fundtens}) has index $n-1$. 

 Assume that there exists a non-vanishing vector field $X$ in $A^*$ ($X_p\in A^*$ for all $p\in M$), such that $L(X)>0$. If $A$ is the connected part of $L^{-1}((0,\infty))$ containing $X$ and its closure  $ \bar A $ in $TM\setminus \mathbf{0}$ is included in $A^*$,  then $L$ is a Lorentz-Finsler metric with cone domain  $A$. 
\end{prop}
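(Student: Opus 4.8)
The plan is to reduce everything to the fiberwise theory already established in Lemma~\ref{l(i)} and Proposition~\ref{propiedades}, and then reassemble the fibers into a global cone domain. The basic observation is that the fundamental tensor $g_v$ in \eqref{fundtens} is computed purely from the restriction of $L$ to the fiber $T_pM$ (all the derivatives involved are vertical), so for each $p$ the restriction $L_p:=L|_{A^*_p}$ is a pseudo-Minkowski norm of index $n-1$ on the conic open set $A^*_p\subset T_pM$. Hence the genuine content of the statement splits into: (a) the selected region $A$ is a true conic domain (fibers conic, nonempty, connected); (b) each $A_p$ is the interior of a properly Lorentz-Minkowski fiber norm; and (c) $\bar A$ is an embedded manifold with boundary $\C$ on which $L$ extends smoothly by zero.

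First I would record some elementary structure of $A$. Since $L$ is two-homogeneous, $L^{-1}((0,\infty))$ is invariant under the radial flow $v\mapsto\lambda v$, $\lambda>0$; as each radial half-line is connected, it lies entirely in one connected component, so $A$ and every $A_p:=A\cap T_pM$ are conic. Because $A^*$ is a domain with all fibers nonempty, $M=\pi(A^*)$ is connected; thus $X(M)$ is connected and contained in $L^{-1}((0,\infty))$, so $X(M)\subset A$ and in particular every $A_p$ is nonempty. Finally, being a connected component of the open set $L^{-1}((0,\infty))$, $A$ is both open and closed there, whence $A_p$ is open and closed in $L_p^{-1}((0,\infty))$, i.e. a union of connected components of $\{L_p>0\}$; and since the fiber is closed in $TM$, $\bar A_p\subset \bar A\cap T_pM\subset A^*_p$.

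The fiberwise analysis and the connectedness of the fibers come next, and the latter is where I expect the main obstacle to lie. For each connected component $C$ of $\{L_p>0\}$ with $\bar C\subset A^*_p$, I would apply Lemma~\ref{l(i)} (taking as the ambient connected conic set the component of $A^*_p$ containing $C$) to conclude that $C$ is properly Lorentz-Minkowski and that $\bar C$ is a smooth manifold with boundary $L_p^{-1}(0)\setminus\{0\}$, and then Proposition~\ref{propiedades}(vi) to conclude that $\partial C$ is a strong cone; in particular it is \emph{salient}, so distinct such components are disjoint salient cones. It then remains to rule out that $A_p$ collects more than one of them. Write $C_p(X)$ for the component of $\{L_p>0\}$ containing $X_p$, and $U:=\bigcup_p C_p(X)$. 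The key claim is that $U$ is open and closed in $L^{-1}((0,\infty))$: this is exactly a transversality/local-triviality statement, to the effect that near any base point the set $\{L>0\}$ is a locally trivial bundle of (mutually disjoint, salient) cone interiors over the base, so that both the $X$-selected cone and the union of the non-selected ones vary continuously with $p$ (this is the content of the smoothness of $L$ together with $\mathrm{index}\,g=n-1$, in the spirit of the transversality used in Theorem~\ref{p_transversality}). Granting this, $A\cap U$ is open and closed in the connected set $A$ and contains $X(M)$, so $A\subseteq U$; since trivially $C_p(X)\subseteq A_p$, we get $A_p=C_p(X)$, a single salient cone interior, hence connected. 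I regard establishing this local product structure of $\{L>0\}$ (and thus the open-closedness of $U$) as the delicate point, relying essentially on both the salience of the cones and the connectedness of $A$.

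With connectedness in hand the conclusion is routine. Each $A_p$ is a single component $C$ as above, so $L_p$ is properly Lorentz-Minkowski and $A$ is a conic domain whose boundary $\C=\partial A=\bigcup_p\C_p$ consists of strong cones. For the global regularity, for every $v\in\C\subset A^*$ Lemma~\ref{l(i)} yields $dL_v=2g_v(v,\cdot)\neq 0$ (as $g_v$ is nondegenerate and $v\neq 0$), so $0$ is a regular value of $L$ along $\bar A$; hence $\C$ is an embedded hypersurface, $\bar A=\{L\geq 0\}$ is locally an embedded manifold with boundary $\C$, and $L$ is smooth on $A$ and extends smoothly as zero to $\C$. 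Thus all the requirements of Definition~\ref{finslerst} are met, so $L|_A$ is a Lorentz-Finsler metric, and by Corollary~\ref{c_3.6} its boundary $\C$ is the associated cone structure with cone domain $A$.
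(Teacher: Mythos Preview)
Your approach is correct and follows the same two ingredients as the paper's proof: invoke Lemma~\ref{l(i)} fiberwise to obtain that each $L_p$ is properly Lorentz-Minkowski, and use that $0$ is a regular value of $L$ on $\bar A$ (via $dL_v=2g_v(v,\cdot)$ with $g_v$ nondegenerate) to conclude that $\bar A$ is a smooth manifold with boundary $\C$. The paper's proof is considerably terser and does not explicitly verify the connectedness of each fiber $A_p$; you correctly identify this as the subtle point, and your argument via the convexity and salience of the cones $C_p(X)$ (so that segments from $X_p$ stay inside, making $U=\bigcup_p C_p(X)$ open and closed in $L^{-1}((0,\infty))$) is exactly the right way to fill what the paper leaves implicit.
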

\begin{proof}  Observe that  Lemma \ref{l(i)} guarantees that every $L_p=L|_{A\cap T_pM}$ is a properly Lorentz-Minkowski norm,  so it is enough to check that   $\bar A \subset TM\setminus{\bf 0}$  is a smooth manifold with boundary, which follows because its boundary
$\C$ is the inverse image of 
the regular value $0$ of $L$
 (use again that $g_v$ is non degenerate for all $v\in \C$ and
   $dL_v(w)=2 g_v(v,w)$).
\end{proof}
 As a difference with Beem's approach \cite{Beem70},  we will focus all our attention on  $\bar A$ considering properties of $L$ independent of possible extensions.  

\subsection{Anisotropic equivalence} 

In  order to characterize the Lorentz-Finsler metrics with the same associated cone structure, let us introduce  the following natural extension of a concept for classical Finsler metrics. 
 \begin{defi}\label{d_anisotropic}
 Two Lorentz-Finsler metrics $L_1, L_2:\bar A\rightarrow [0,+\infty)$ are said to be {\em anisotropically equivalent} if there exists a smooth positive function $\mu:  \bar A \rightarrow \R^+$    such that $L_2=\mu L_1$;  then, the  function\footnote{ Necessarily 0-homogeneous and, thus, non-continuously extendible to the zero section~$\bf 0$. } $\mu$ is called the {\em  anisotropic factor}. 
 \end{defi}
The following lemma will be useful to characterize this definition as well as to study other properties in  Subsection  \ref{s4.2}. 
\begin{lemma}\label{lsmooth}  Let $L_1, L_2$ be two smooth functions on a manifold $N$  
and let $\C$ be a hypersurface obtained as $\C=L_1^{-1}(0)=L_2^{-1}(0)$, where $0$ is a common regular value of $L_1, L_2$. If $\C$ is the boundary of a domain $A$ where $L_1,L_2>0$, then  $L_2/L_1$ can be smoothly extended to $\C $  (as a positive function)  and $\sqrt{L_1L_2}$ is smooth on $\bar A$. 
\end{lemma}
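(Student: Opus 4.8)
The plan is to reduce everything to a local computation in a neighbourhood of $\C$, since away from $\C$ both $L_1$ and $L_2$ are smooth and strictly positive, so $L_2/L_1$ and $\sqrt{L_1L_2}$ are trivially smooth there. Fix $p\in\C$. Because $0$ is a regular value of $L_1$, we have $dL_1|_p\neq 0$, so $L_1$ can be completed to a coordinate system centred at $p$ in which $L_1$ is one of the coordinates, say $s=L_1$, the remaining coordinates being denoted collectively by $x$. In these coordinates $\C=\{s=0\}$ locally and, since $L_1>0$ on $A$, the domain $A$ is the side $\{s>0\}$.

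First I would analyse $L_2/L_1$. Since $L_2$ vanishes on $\C$ we have $L_2(x,0)\equiv 0$, so Hadamard's lemma produces a smooth function $h(x,s)=\int_0^1 \partial_s L_2(x,\tau s)\,d\tau$ with $L_2(x,s)=s\,h(x,s)$. Hence on $A$ (where $s=L_1>0$) one has $L_2/L_1=h$, which is manifestly smooth up to and including $\C$. These local smooth extensions agree with the continuous function $L_2/L_1$ on $A$ and therefore glue to a single smooth extension $\mu:=L_2/L_1$ defined on all of $\bar A$.

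It remains to check positivity of $\mu$ on $\C$, which is the one point where the regularity of $L_2$ is genuinely used and is the main step. On $\C$ we have $\mu|_{\C}=h(x,0)=\partial_s L_2(x,0)$. Differentiating $L_2(x,0)\equiv 0$ in the $x$-directions shows that the $x$-partials of $L_2$ vanish along $\C$, so $dL_2=\partial_s L_2\,ds$ there; since $0$ is a regular value of $L_2$ this forces $\partial_s L_2\neq 0$ on $\C$, and the sign must be positive because $L_2>0$ for $s>0$. Thus $\mu>0$ on all of $\bar A$.

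Finally, for $\sqrt{L_1L_2}$ I would simply write $L_1L_2=L_1^2\,\mu$ on $A$, so that $\sqrt{L_1L_2}=L_1\sqrt{\mu}$ (using $L_1\geq 0$ on $\bar A$). Since $L_1$ is smooth and $\mu$ is smooth and strictly positive on $\bar A$, the function $L_1\sqrt{\mu}$ is smooth on $\bar A$, and it coincides with the continuous extension of $\sqrt{L_1L_2}$, both vanishing on $\C$. The only delicate point in the whole argument is the positivity of the extension of $L_2/L_1$: this is exactly where the hypothesis that $0$ be a regular value of $L_2$ (and not merely that $L_2$ vanish on $\C$) enters, ruling out a spurious higher-order vanishing of $L_2$ along $\C$ that would destroy both the positivity of $\mu$ and the smoothness of $\sqrt{L_1L_2}$.
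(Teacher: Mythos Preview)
Your proof is correct and follows essentially the same route as the paper: choose local coordinates with $L_1$ as one coordinate, factor $L_2=L_1\cdot\mu$ with $\mu$ smooth (the paper states this directly, you justify it via Hadamard's lemma), observe $\mu|_{\C}=\partial_s L_2>0$ by regularity of $L_2$, and conclude $\sqrt{L_1L_2}=L_1\sqrt{\mu}$ is smooth. Your version is a bit more explicit about the smoothness of the extension and the gluing of the local pieces, but the argument is the same.
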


\begin{proof} We can assume that, locally, $L_1$ is the first coordinate $r=x_1$ of a chart $(x_1=r,x_2,\dots , x_n)$ around some $p\in\C$ ($r$ can be thought as the distance function to $\C$ for 
the auxiliary Riemannian metric  $g_R=\sum dx_i^2$). The local function $\mu:=L_2/r=L_2/L_1$ on $A$ can be smoothly extended  on
$\C$ as $\partial_r L_2$  $ = 
dL_2(\partial_r)>0$ 
(recall that $\partial_r$ is transversal to $\C$ and points out inside $A$), proving the first assertion.  Then, one has locally on $\bar A$
$$
\sqrt{L_1L_2}=r\sqrt{\mu},
$$
 where the right-hand side  is the product of two smooth functions, as $\mu$ does not vanish on $\C$.
\end{proof}

\begin{thm}\label{t_anisotropic}
Two Lorentz-Finsler metrics $L_1,L_2:\bar A\rightarrow [0,+\infty)$ are anisotropically equivalent if and only if their associated cone structures are equal.  

Moreover,  in such a case the smooth extension to any $v\in \C$ of the factor of anisotropy $\mu=L_2/L_1$ on $A$ can be computed as  
\begin{equation}\label{e_mu}
 \mu(v)=\frac{g^2_v(v,w)}{g^1_v(v,w)},
\end{equation}
where $g^1$ and $g^2$ are the fundamental tensors of $L_1$ and $L_2$, respectively,  and  $w$  is any vector in $T_{\pi(v)}M$ such that $g^1_v(v,w)\neq 0$ (and, thus, $g^2_v(v,w)\neq 0$). 
\end{thm}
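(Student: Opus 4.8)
The plan is to prove the two implications of the equivalence separately, and then to obtain the formula \eqref{e_mu} by differentiating the relation $L_2=\mu L_1$ along each fibre on the cone. Throughout I would keep in mind that, by Cor.~\ref{c_3.6}, the cone structure associated with a Lorentz-Finsler metric $L$ is exactly the boundary $\C=\partial A$ of its cone domain $A=L^{-1}((0,\infty))$.

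The forward implication is immediate. If $L_1,L_2$ are anisotropically equivalent then $L_2=\mu L_1$ with $\mu>0$ smooth on $\bar A$ (Def.~\ref{d_anisotropic}), so $L_1$ and $L_2$ have exactly the same zero set and the same sign. Hence their cone domains $A$ and boundaries $\C$ coincide, i.e.\ the associated cone structures are equal.

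For the converse, which is the substantial direction, I would assume $L_1$ and $L_2$ share the cone structure $\C$, so that their cone domains have the common closure $\bar A$ with $\C=\partial A$ and $L_1,L_2>0$ on $A$. The natural candidate for the anisotropic factor is $\mu:=L_2/L_1$, which is plainly smooth and positive on $A$; the whole difficulty is to extend it smoothly and positively across the lightlike boundary $\C$. To invoke Lemma~\ref{lsmooth} I would first extend each $L_i$, via Rem.~\ref{r3.6}(2), to a smooth function on an open conic domain $A_i^*\supset\bar A$ with $L_i<0$ on $A_i^*\setminus\bar A$; on the open set $N:=A_1^*\cap A_2^*$ both functions are smooth, one has $\C=L_1^{-1}(0)=L_2^{-1}(0)$ as the common boundary of $A$, and $0$ is a common regular value since $dL_i|_v(\cdot)=2g^i_v(v,\cdot)$ (Lemma~\ref{l(i)}) is nonzero at each $v\in\C$ by nondegeneracy of the fundamental tensor. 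Lemma~\ref{lsmooth} then yields that $\mu=L_2/L_1$ extends smoothly to $\C$ as a positive function. Since $L_2=\mu L_1$ holds on $A$ and both sides vanish on $\C$, the identity persists on all of $\bar A$ with $\mu$ smooth and positive there, which is exactly anisotropic equivalence.

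Finally, the formula \eqref{e_mu} should follow by differentiating this identity in the fibre direction. Fixing $v\in\C$ and $p=\pi(v)$, the relation $L_2=\mu L_1$ restricts to an identity of smooth functions on the fibre $\bar A_p$, so the product rule gives $dL_2|_v(w)=\mu(v)\,dL_1|_v(w)+L_1(v)\,d\mu|_v(w)$ for every $w\in T_pM$. Because $v\in\C$ forces $L_1(v)=0$, and because $dL_i|_v(w)=2g^i_v(v,w)$ by Lemma~\ref{l(i)}, this collapses to $g^2_v(v,w)=\mu(v)\,g^1_v(v,w)$; choosing any $w$ with $g^1_v(v,w)\neq0$ (such $w$ exists since $g^1_v$ is nondegenerate and $v\neq0$, and then $g^2_v(v,w)\neq0$ as $\mu(v)>0$) yields \eqref{e_mu}. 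The hard part of the whole argument is the single issue of regularity of $\mu$ up to $\C$: on $A$ the ratio of two smooth positive functions is harmless, but the smooth extension across the lightlike boundary is precisely the delicate point encapsulated in Lemma~\ref{lsmooth}, which is why the common regular-value setup on $N$ must be arranged carefully.
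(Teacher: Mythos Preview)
Your proof is correct and follows essentially the same route as the paper: the forward implication is immediate, and for the converse you invoke Lemma~\ref{lsmooth} (after extending the $L_i$ via Rem.~\ref{r3.6}(2)) to show that $\mu=L_2/L_1$ extends smoothly and positively to $\C$. The only minor difference is in obtaining \eqref{e_mu}: the paper computes $\mu(v)$ as $\lim_{t\to 0}L_2(v+tw)/L_1(v+tw)$ via L'H\^opital, whereas you differentiate the identity $L_2=\mu L_1$ directly using the product rule---both arguments are equivalent once the smoothness of $\mu$ on $\bar A$ is established.
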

\begin{proof} $(\Rightarrow)$ Obvious from the definition.

$(\Leftarrow)$ Notice that 
$L_2/L_1$ (which is smooth on $\C$ by  Lemma~\ref{lsmooth} 
applied locally to  a neighborhood $N$ of 
each point of the common cone)  provides the 
anisotropic factor.  In order to  
   check \eqref{e_mu}, 
 given $v\in\C$ and $w\in T_{\pi(v)}M$ 
as stated, we can assume that, for small $|t|>0$,  $v+tw$ belongs to the open domain $A^*_{\pi(v)}\supset A_{\pi(v)}$ of some extension of $L_1,$ and $L_2$.  By  applying L'H\^opital rule and  Lemma \ref{l(i)},  
\[ \lim_{t\rightarrow 0} \frac{L_2(v+tw)}{L_1(v+tw)}=\frac{g^2_v(v,w)}{g^1_v(v,w)}.\]
Finally, observe that $g^1_v(v,\cdot)$ and $g^2_v(v,\cdot)$ are one-forms with the same kernel (the tangent space to the lightlike cone), and then the quotient does not depend on $w$. 
\end{proof}


Finally, we emphasize that the tangent bundle $T\C$ can also be characterized in terms of the fundamental tensor $g$ of  any compatible $L$.  Recall that for a classical Finsler metric $F$ with fundamental tensor $g$, its Hilbert form is defined as $\omega(w)=g_v(v,w)/F(v)$. In the Lorentz-Finsler case, such a form does not make sense (as one would divide by 0), however, expressions as \eqref{e_mu} show that a similar form may have interest.

\begin{prop} Let $L$ be a Lorentz-Finsler metric  with fundamental tensor $g$ and $\C$ its associated cone structure.  Consider the {\em rough Hilbert form}  $\omega^L: \bar A  \rightarrow TM^*$ 
 defined as
$\omega^L_v=g_v(v,\cdot )$, 
for all  $v\in \bar A.$  Then 
$$
T_v\C = \hbox{ $\ker$ }(\omega_v^L) \qquad \forall v\in \C
$$
\end{prop}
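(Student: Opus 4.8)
The plan is to work pointwise, fixing $p\in M$ and a lightlike vector $v\in\C_p$, and to prove the equality of the two hyperplanes $T_v\C$ and $\ker(\omega^L_v)=\{w\in T_pM: g_v(v,w)=0\}$ inside $T_v(T_pM)\cong T_pM$. Both are codimension-one subspaces of $T_pM$, so it suffices to show one inclusion; equality then follows by a dimension count. The natural direction is to show $T_v\C_p\subseteq\ker(\omega^L_v)$, and since $\C$ restricted to the fiber $T_pM$ is exactly $\C_p=L_p^{-1}(0)\setminus\{0\}$ (the boundary of $A_p$), this is genuinely a statement about the zero-level set of the single pseudo-Minkowski norm $L_p$.

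First I would invoke the identity from Lemma~\ref{l(i)}, namely $dL_v(w)=2g_v(v,w)$ for all $w\in V$, which holds on $\bar A_0$ and in particular on $\C_0$. This is the key computational input and it does all the work. Indeed, $T_v\C_p$ is the tangent space to the level set $L_p^{-1}(0)$, and since $0$ is a regular value of $L_p$ (again by Lemma~\ref{l(i)}, which gives that $\C_0$ is a smooth hypersurface because $0$ is a regular value up to the origin), we have the standard characterization
\begin{equation*}
T_v\C_p=\ker\big(dL_v\big)=\{w\in T_pM: dL_v(w)=0\}.
\end{equation*}
Substituting $dL_v(w)=2g_v(v,w)$ immediately identifies this kernel with $\{w: g_v(v,w)=0\}=\ker(\omega^L_v)$, giving the equality directly rather than merely one inclusion.

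There are two small points to address so the argument is clean. One must note that $T_v\C$, a priori a subspace of $T_v(TM)$, reduces to $T_v\C_p\subseteq T_pM$ for the purposes of this statement: the rough Hilbert form $\omega^L_v$ is a functional on $T_pM$ (equivalently, $g_v(v,\cdot)$ acts on the fiber), and the claimed equality is understood fiberwise, as is already implicit in the analogous classical Hilbert form. So I would state at the outset that we identify $T_v\C$ with $T_v\C_p$ via the vertical/fiber tangent structure, consistently with how $T\C$ is used elsewhere in the paper. The second point is that $\omega^L_v$ is nonzero on $\C$: since $g_v$ is nondegenerate and $v\neq 0$, the functional $g_v(v,\cdot)$ does not vanish identically, so its kernel is genuinely a hyperplane, matching $\dim T_v\C_p=n-1$.

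I do not expect any serious obstacle here, as the statement is essentially a repackaging of $dL_v(w)=2g_v(v,w)$ together with the regularity of $0$ as a value of $L$. The only thing requiring care is the bookkeeping around what ambient space $T_v\C$ lives in and the reduction to the fiber; that is a notational clarification rather than a mathematical difficulty. It is worth remarking that part~$(iii)$ of Proposition~\ref{propiedades} already records the geometric content of this statement (that $T_v\C_0$ is the $g_v$-orthogonal space to $v$), so the proof can legitimately be reduced to citing that result; I would present the short self-contained derivation from Lemma~\ref{l(i)} for completeness.
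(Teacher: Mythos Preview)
Your proposal is correct and takes essentially the same approach as the paper: the paper's proof consists of the single sentence ``Apply part~$(iii)$ of Prop.~\ref{propiedades},'' and you both recognize this and unpack that citation into the underlying computation $T_v\C_p=\ker(dL_v)=\ker\big(2g_v(v,\cdot)\big)$ via Lemma~\ref{l(i)}. Your additional remarks on the fiberwise interpretation of $T_v\C$ and the nonvanishing of $\omega^L_v$ are helpful clarifications that the paper leaves implicit.
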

\begin{proof} Apply  part $(iii)$ of  Prop. \ref{propiedades}. 
\end{proof}

\begin{rem}  From \eqref{e_mu}, 
if $L_2=\mu L$ is a second  Lorentz-Finsler metric with cone $\C$, then $\omega^{L_2} =  \mu \omega^L$  on $\C$, consistently  with the fact that $T\C$ is associated with the anisotropically conformal class of Lorentz-Finsler metrics compatible with $\C$.
\end{rem}

\section{Constructing new examples of (smooth) Finsler spacetimes}\label{s4}
   It seems that a systematic construction of  (smooth)  Finsler spacetimes  as  above  is missing in literature,   being the only examples we  have found  either  perturbations of classical spacetimes \cite{Voicu17} or  anisotropically conformal to Lorentz metrics \cite[Eq. (5)]{Min15}. In this section, we will try to fill this gap  by  characterizing all possible examples  and  constructing easily some families. 
  \subsection{A natural class of Finsler spacetimes} \label{newclass}
  Next, new examples of smooth Finsler spacetimes will be constructed. 


\begin{thm}\label{t_examp}
Let $M$ be a manifold endowed with a   (classical)  Finsler metric $\cF: TM \rightarrow \R$  with indicatrix $\hat\Sigma=\cF^{-1}(1)$ and a  
non vanishing one-form  $\omega$ such that, at each point $p\in M$, the intersection $\hat\Sigma_p\cap \omega_p^{-1}(1)$ is (non-empty) transverse. 
Then  $L: \bar A \rightarrow \R$ defined as  
\begin{equation}\label{e_ejemplo}
L(v):=\omega(v)^2-\cF(v)^2 \qquad \quad \forall v\in \bar A:=\{w\in TM \setminus \mathbf{0}: \,  \omega(w) \geq  \cF(w)\}
\end{equation} 
is a Lorentz-Finsler metric  with cone domain $A$ equal to the interior of $\bar A$. 

 Moreover, the cone structure $\C$  of $L$ can be described by a cone triple $(\Omega, T, \hF )$ with $\Omega= \omega$, $T$ any vector field in $\omega^{-1}(1)\cap A$ and $\hF$  the Finsler metric on $\ker \Omega$ with indicatrix $(\hat\Sigma \cap \omega^{-1}(1))-T$ (i.e., the translation with $-T$ of $\hat\Sigma \cap \omega^{-1}(1)\subset TM$). 
\end{thm}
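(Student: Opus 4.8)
The plan is to recognize $L$ as a Lorentz-Finsler metric through Beem's criterion (Prop.~\ref{p_beem}) and then to read off the cone triple from Th.~\ref{p_transversality}. Since $\omega$ is a smooth $1$-form and $\cF^2$ is smooth off the zero section, $L=\omega^2-\cF^2$ is two-homogeneous and smooth on $TM\setminus\mathbf{0}$. Because $\omega$ is linear, its contribution to \eqref{fundtens} is $\omega(u)\omega(w)$, so the fundamental tensor of $L$ at $v$ is $g^L_v(u,w)=\omega(u)\omega(w)-g^{\cF}_v(u,w)$, where $g^{\cF}_v$ denotes the positive-definite fundamental tensor of $\cF$.

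First I would determine where $g^L_v$ has index $n-1$. Using that $g^{\cF}_v$ is positive definite, write $g^L_v(u,w)=g^{\cF}_v((\Phi_v-\mathrm{Id})u,w)$, where $\Phi_v$ is the $g^{\cF}_v$-selfadjoint rank-one operator $u\mapsto\omega(u)\,\omega^{\sharp}$, with $\omega^{\sharp}$ the $g^{\cF}_v$-dual of $\omega$. The only nonzero eigenvalue of $\Phi_v$ is $\|\omega\|^2_{g^{\cF}_v}:=g^{\cF}_v(\omega^{\sharp},\omega^{\sharp})$, so $\Phi_v-\mathrm{Id}$ has eigenvalues $\|\omega\|^2_{g^{\cF}_v}-1$ (simple) and $-1$ (multiplicity $n-1$). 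Hence $g^L_v$ is nondegenerate of index $n-1$ precisely when $\|\omega\|_{g^{\cF}_v}>1$. Since $g^{\cF}_v(v,v)=\cF(v)^2$ (Lemma~\ref{l(i)}), Cauchy-Schwarz gives $\omega(v)=g^{\cF}_v(\omega^{\sharp},v)\le\|\omega\|_{g^{\cF}_v}\cF(v)$ whenever $\omega(v)\ge0$; thus on the interior $A=\{\omega>\cF\}$ one gets $\|\omega\|_{g^{\cF}_v}>1$ strictly.

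The crux is the boundary $\C=\{\omega=\cF\}$, and this is exactly where the transversality hypothesis enters. On $\C$, equality would hold in the above Cauchy-Schwarz estimate, which forces $v$ to be proportional to $\omega^{\sharp}$, i.e. $\ker g^{\cF}_v(v,\cdot)=\ker\omega$. But $\ker g^{\cF}_v(v,\cdot)=T_v\hat\Sigma_p$ (as $d(\cF^2)_v=2g^{\cF}_v(v,\cdot)$) and $\ker\omega=T_v\omega_p^{-1}(1)$, so this would mean $\hat\Sigma_p$ and $\omega_p^{-1}(1)$ are tangent at $v$, contradicting the assumed transversality. Therefore $\|\omega\|_{g^{\cF}_v}>1$ on all of $\bar A$, which consequently lies in the open conic set $U=\{v:\|\omega\|_{g^{\cF}_v}>1\}$ on which $g^L$ has index $n-1$. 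Choosing $X=T$ a section of $\omega^{-1}(1)\cap A$ (which exists by the non-emptiness of the fibers together with a partition-of-unity argument as in Lemma~\ref{l_2.1}), one has $L(T)=1-\cF(T)^2>0$; applying Prop.~\ref{p_beem} with a conic domain $A^*$ satisfying $\bar A\subset A^*\subset U$, $L$ is a Lorentz-Finsler metric whose cone domain is the connected component of $L^{-1}((0,\infty))$ through $T$. As $L^{-1}((0,\infty))=\{\omega>\cF\}\cup\{-\omega>\cF\}$ splits into the future and past cones, that component is $\{\omega>\cF\}$, the interior of $\bar A$, proving the first two assertions.

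It remains to identify the cone triple. By Cor.~\ref{c_3.6}, $\C=\partial A$ is the cone structure of $L$; moreover $\omega$ is timelike (positive on every causal vector, as $\omega\ge\cF>0$ on $\bar A$) and $T$ is $\omega$-unit timelike, so Th.~\ref{p_transversality} yields a Finsler metric $F$ on $\ker\omega$ with indicatrix $\pi(\omega^{-1}(1)\cap\C)$. On $\omega^{-1}(1)\cap\C$ one has $\omega(v)=1=\cF(v)$, hence $\omega^{-1}(1)\cap\C=\hat\Sigma\cap\omega^{-1}(1)$, and by \eqref{e_OmegaDecomposition} $\pi(v)=v-T$ there. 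Thus the indicatrix of $F$ equals $(\hat\Sigma\cap\omega^{-1}(1))-T$, which is exactly the indicatrix prescribed for $\hF$; this identifies $F=\hF$ and shows that $(\omega,T,\hF)$ is a cone triple for $\C$. The main obstacle throughout is the boundary step of the third paragraph, where the transversality hypothesis must be converted into the strict Cauchy-Schwarz inequality along the lightcone.
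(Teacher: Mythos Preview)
Your proof is correct and follows a route different from the paper's. The paper argues the signature directly: $g^L_v$ is negative definite on $\ker\omega$ (dimension $n-1$), and one then exhibits a vector of positive $g^L_v$-norm---namely $v$ itself when $v\in A$, or, when $v\in\C$, a suitable element of $\mathrm{span}\{v,w\}$ where $w\in\ker\omega$ is chosen not tangent to $\hat\Sigma$ (this is how the paper invokes transversality). You instead diagonalize $g^L_v$ relative to $g^{\cF}_v$, read off that the index is $n-1$ exactly when $\|\omega\|_{g^{\cF}_v}>1$, and verify this inequality on $\bar A$ via Cauchy--Schwarz, with transversality ruling out the equality case on $\C$.

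Your argument has the virtue of identifying explicitly the maximal open set $U=\{\|\omega\|_{g^{\cF}_v}>1\}$ on which $g^L$ has Lorentzian signature, which makes the choice of $A^*$ in Prop.~\ref{p_beem} transparent; the paper's argument is slightly more elementary (no $g^{\cF}_v$-dual $\omega^\sharp$ is needed) and stays closer to the geometry of the indicatrix. One small point you leave implicit: when you assert that the connected component of $L^{-1}((0,\infty))$ through $T$ is $\{\omega>\cF\}$, you are using that each fiber $\{\omega_p>\cF_p\}$ is convex (as the superlevel set of the concave function $\omega_p-\cF_p$), hence connected; the paper states this convexity up front, and you should too.
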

\begin{proof}  First, notice that $A$ is convex, since it is the conic saturation of a convex subset (the intersection of $\omega^{-1}(1)$ and the unit ball of $\cF$). So, one can find a vector field $X$ in $A$ in a standard way (first locally and, using a partition of unity, globally) and choose the normalized one $T=X/\omega(X)$. So,  we have just to prove that the fundamental tensor $g_v$ of $L$ has index $n-1$ for all  $v\in \bar A$ and claim Prop. \ref{p_beem} with $A^*=TM$ (this implies that  $L$ is Lorentz-Finsler and the remainder is straightforward).  
Observe that 
\[g_v(u,w)=\omega(u)\omega(w)-\hat g_v(u,w)\]
where $v\in TM\setminus \bf 0$, $u,w\in TM$ and $\hat g_v$ is the fundamental tensor of $\cF$.  Trivially, $g_v$ is negative definite in the hyperplane $\ker (\omega)$. As   $L(v)=g_v(v,v)$ by  positive homogeneity, if $v\in A$ then $g_v(v,v)=\omega(v)^2-\cF(v)^2>0$, and the required index is obtained. So, assume otherwise  that $v\in \bar A$ and   $L(v)=g_v(v,v)=0$.   
As $\hat\Sigma$ and $\omega^{-1}(1)$ are transversal, we can choose a vector $w\in \ker(\omega)$  not tangent to $\hat\Sigma$.  Necessarily,  $\hat g_v(v,w)\not=0$, and then
\[w(L)=g_v(v,w)=-\hat g_v(v,w)\not=0\]
 So,  $g_v$ restricted to  span$\{v,w\}$ has Lorentzian signature, there exists  $u\in $ span$\{v,w\}$ with $g_v(u,u)>0$, and the index of $g_v$ becomes again $n-1$. 
\end{proof}

 Up to a re-scaling, the previous result can be applied to any $F$ and $\omega$. 

\begin{cor}\label{c_examp}
 If   $(M,\cF)$ is a Finsler manifold and $\omega$ a  non-vanishing  one-form on $M$, there exists a positive  function $\mu:M\rightarrow  \R $ such that  $$L(v_p)  :=  \left(\mu(p)\omega(v_p)\right)^2-\cF(v_p)^2$$  
  for every $ v_p\in \bar A:=\{w_p\in  TM\setminus {\bf 0} : \,  \mu(p)\omega(w_p) \geq  \cF(w_p)\}$ 
is a Lorentz-Finsler metric.
\end{cor}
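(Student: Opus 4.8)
The plan is to reduce Corollary \ref{c_examp} directly to Theorem \ref{t_examp} by exhibiting a positive function $\mu$ for which the transversality hypothesis of that theorem is automatically satisfied at every point. The point is that Theorem \ref{t_examp} already does all the heavy lifting (constructing the cone triple, verifying the Lorentzian index, invoking Prop. \ref{p_beem}); the only gap is that for an arbitrary one-form $\omega$ the hypotheses $\hat\Sigma_p\cap \omega_p^{-1}(1)\neq\emptyset$ and transversality of this intersection may fail. So the real content of the corollary is that one can always rescale $\omega$ pointwise by a smooth positive factor $\mu(p)$ so as to force these conditions, and then apply Theorem \ref{t_examp} to $\tilde\omega:=\mu\,\omega$.

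First I would fix a point $p$ and analyze the geometry in $T_pM$. The indicatrix $\hat\Sigma_p=\hat F^{-1}(1)$ is a compact strongly convex hypersurface enclosing $0_p$, and $\omega_p$ is a nonzero linear form, so $\omega_p^{-1}(c)$ is a family of parallel affine hyperplanes. As $c$ ranges over $(0,\infty)$, the hyperplane $\omega_p^{-1}(c)$ sweeps across $T_pM$; for $c$ small it meets the interior of the unit ball of $\hat F$ (intersecting $\hat\Sigma_p$ transversely in a sphere), while for $c$ large it misses the compact set $\hat\Sigma_p$ entirely. The key elementary observation is that there is an open range of values $c$ for which $\omega_p^{-1}(c)$ meets $\hat\Sigma_p$ transversely: precisely those $c$ strictly between $0$ and the supremum $s(p):=\sup\{\omega_p(v): v\in \bar B_p\}$ attained on the closed unit ball $\bar B_p$ of $\hat F_p$. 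For such $c<s(p)$ the affine hyperplane separates part of $\hat\Sigma_p$ from $0_p$, so it crosses $\hat\Sigma_p$, and by strong convexity every such crossing is transverse (a tangency would force the whole indicatrix to lie on one side, contradicting $c<s(p)$). Hence choosing $\mu(p)$ so that $1/\mu(p)$ is a value in $(0,s(p))$—equivalently $\mu(p)>1/s(p)$—guarantees that $(\mu(p)\omega_p)^{-1}(1)=\omega_p^{-1}(1/\mu(p))$ meets $\hat\Sigma_p$ transversely and non-emptily.

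The remaining issue is smoothness of $p\mapsto\mu(p)$. The function $s(p)=\max_{v\in\hat\Sigma_p}\omega_p(v)$ is the support-function value of the indicatrix in the codirection $\omega_p$; since $\hat\Sigma_p$ varies smoothly and is strongly convex (so the maximizer is unique and depends smoothly on $p$ by the implicit function theorem applied to the Lagrange condition $g_v(v,\cdot)\parallel\omega_p$), the map $p\mapsto s(p)$ is smooth and strictly positive. Therefore I would simply set, say, $\mu(p):=2/s(p)$, which is smooth and positive and satisfies $\mu(p)>1/s(p)$ at every $p$. With this choice $\tilde\omega=\mu\omega$ satisfies the hypothesis of Theorem \ref{t_examp} pointwise, and that theorem yields directly that $L(v_p)=(\mu(p)\omega(v_p))^2-\hat F(v_p)^2$ is a Lorentz-Finsler metric on $\bar A=\{w_p:\mu(p)\omega(w_p)\geq\hat F(w_p)\}$, completing the proof.

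The main obstacle I anticipate is not the existence of a pointwise-good value of $\mu$, which is the elementary convex-geometry observation above, but rather establishing the \emph{smooth} dependence of the threshold $s(p)$ (or any admissible choice of $\mu$) on $p$, together with checking that the statement one actually needs is a strict inequality that admits an open range—so that a global smooth selection exists without any partition-of-unity gluing. Once smoothness of $s$ is in hand the rest is immediate; if one wished to avoid even discussing $s$, an alternative is to note that for each $p$ the admissible set of $\mu(p)$ is an open ray $(1/s(p),\infty)$, these rays vary lower-semicontinuously, and a standard smooth-selection argument (or directly $\mu=2/s$) produces the required global smooth $\mu$.
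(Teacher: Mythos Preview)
Your proposal is correct and follows essentially the same strategy as the paper: show that a sufficiently large pointwise rescaling $\mu$ forces the transversality hypothesis of Theorem~\ref{t_examp}, then apply that theorem to $\mu\omega$. The only difference is in how the global smooth $\mu$ is produced: the paper simply observes that a large enough constant $\mu$ works on a neighborhood $U_p$ of each $p$ and then glues via a partition of unity, whereas you construct $\mu$ explicitly as $\mu(p)=2/s(p)$ with $s(p)=\max_{\hat F_p(v)=1}\omega_p(v)$ (equivalently the dual Finsler norm $\hat F^*(\omega_p)$, which is indeed smooth because $\omega$ is nonvanishing and $\hat F$ has strongly convex indicatrix). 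Your route avoids the partition of unity and yields an explicit formula; the paper's route is one line shorter but less informative. Both are valid, and the reduction to Theorem~\ref{t_examp} is the entire content of the corollary in either case.
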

\begin{proof} In some neighborhood $U_p$ around each $p\in M$, one  can take $\mu>0$ big enough so that all the intersections $\hat \Sigma_q  \cap (\mu\omega_q)^{-1}(1), q\in U_p$, are transverse.  By means of a partition of unity, $\mu$ can be chosen globally and   Th. \ref{t_examp} applies.
\end{proof}
 It is worth pointing out that the previous procedure may yield Lorentz-Finsler metrics even in the case that they are not naturally extendible to all the tangent bundle,  that is, when $\cF: A^* \rightarrow \R$ is just a conic Finsler metric according to Def. \ref{d_ classical Finsler}. The only caution now is that, in order to apply Prop. \ref{p_beem} we have to ensure that $\hat\Sigma_p \cap \omega_p^{-1}(1)$ is not only transverse but also compact (so that $\bar A \subset A^*$), that is: 
\begin{cor} Let $\cF: A^* \rightarrow \R$ be a conic Finsler metric and $\omega$ a one-form such that each $ \hat \Sigma_p  \cap \omega_p^{-1}(1)$ is non-empty, transverse and compact. 
Then  $L: \bar A \rightarrow \R$ defined as in \eqref{e_ejemplo} is a Lorentz-Finsler metric. 
\end{cor}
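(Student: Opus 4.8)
The plan is to deduce the statement from Proposition \ref{p_beem}, following verbatim the proof of Theorem \ref{t_examp}; the only point that is no longer automatic (since now $A^*\neq TM$) is the inclusion $\bar A\subset A^*$, and this is precisely where the compactness hypothesis enters. First I would note that the computation of the fundamental tensor is purely pointwise and indifferent to whether $\cF$ is globally defined: for $v\in A^*$ one still has $g_v(u,w)=\omega(u)\omega(w)-\hat g_v(u,w)$, so $g_v$ is negative definite on $\ker\omega$, the identity $g_v(v,v)=L(v)>0$ forces index $n-1$ whenever $v\in A$, and for $v\in\C$ the transversality of $\hat\Sigma_p$ with $\omega_p^{-1}(1)$ furnishes $w\in\ker\omega$ with $\hat g_v(v,w)\neq 0$, whence $g_v(v,w)=-\hat g_v(v,w)\neq 0$ and the signature is again Lorentzian. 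Thus, granting $\bar A\subset A^*$, the fundamental tensor has index $n-1$ on all of $\bar A$.

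To prove $\bar A\subset A^*$ I would argue fibrewise. Fix $p\in M$, write $\Pi_p:=\omega_p^{-1}(1)$ and $S_p:=\hat\Sigma_p\cap\Pi_p$. Every $v\in\bar A_p$ satisfies $\omega_p(v)\geq\cF_p(v)>0$, so its radial direction meets $\Pi_p$ transversely at $v/\omega_p(v)$, and $v\in A_p=\{w\in A^*_p:\omega_p(w)>\cF_p(w)\}$ exactly when that intersection point lies in the slice $K_p:=\{u\in\Pi_p\cap A^*_p:\cF_p(u)\leq 1\}$. Hence $\bar A_p$ is the conic saturation of $K_p$ and $\partial K_p=S_p$. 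Since $S_p$ is non-empty, transverse and compact, the mechanism of Example \ref{ex2.4} shows that $K_p$ is the compact region enclosed by $S_p$. Moreover $S_p\subset\hat\Sigma_p\subset A^*_p$ and the interior of $K_p$ equals $A_p\cap\Pi_p\subset A_p\subset A^*$, so $K_p\subset A^*_p$; saturating radially and using that $A^*_p$ is conic yields $\bar A_p\subset A^*_p$. As $p$ was arbitrary, $\bar A\subset A^*$.

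With this in hand the remaining steps copy Theorem \ref{t_examp}: the convexity of each slice $K_p$ makes $A$ a convex conic domain, so a non-vanishing vector field $X$ with $L(X)>0$ can be produced locally and then globally by a partition of unity, and the connected part of $L^{-1}((0,\infty))$ containing $X$ is precisely $A$. Proposition \ref{p_beem}, now applicable since the fundamental tensor has index $n-1$ and $\bar A\subset A^*$, identifies $L$ as a Lorentz-Finsler metric with cone domain $A$; in particular each $\C_p$ is automatically a strong cone via Prop. \ref{propiedades}.

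I expect the crux to be the middle paragraph, namely the guarantee that the slice $K_p$ is compact with no boundary points escaping to $\partial A^*_p$. This is the single place where the conic (rather than global) nature of $\cF$ could violate the hypotheses of Proposition \ref{p_beem}, and it is exactly the compactness of $S_p=\hat\Sigma_p\cap\omega_p^{-1}(1)$ that rules this out.
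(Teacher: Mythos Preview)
Your proposal is correct and follows exactly the route the paper intends: the paper's ``proof'' is just the sentence preceding the corollary, which asserts that the compactness of $\hat\Sigma_p\cap\omega_p^{-1}(1)$ is precisely what guarantees $\bar A\subset A^*$, after which the argument of Theorem \ref{t_examp} (and hence Proposition \ref{p_beem}) applies verbatim. You have simply supplied the details the paper leaves implicit, and your identification of the fibrewise inclusion $\bar A_p\subset A^*_p$ as the only new point is exactly right.
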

 A particularly interesting example of conic Finsler metris are Finsler-Kropina ones, defined as a quotient, 
$$
 F_0^2  /\beta : A^*\rightarrow \R, \quad v\mapsto F_0(v)^2/\beta(v), \qquad \forall v\in A^*:= \{w_p\in TM:  \beta(w_p) >0  \},
$$
 where $F_0$ is a Finsler metric  and $\beta$ a non-vanishing one-form on $M$ 
(see \cite[Cor. 4.2]{JavSan11}). This is a classical Kropina metric when $F_0$ comes from a Riemannian metric.  An extension of Cor. \ref{c_examp} is then: 
\begin{cor}
 Let  $F_0^2/\beta:  A^*\rightarrow \R^+ $ be a Finsler-Kropina metric and  $\omega$ a one-form on $M$  such that at no point $p\in M$ the equality $\omega_p=  \lambda  \beta_p$ holds for $\lambda\leq 0$. 
Then there exists a positive function $\mu:M\rightarrow \R^+$ such that  
$$L(v_p)=\left(\mu(p)\omega(v_p)\right)^2-\left(F_0(v_p)^2/\beta (v_p)\right)^2 
$$
for all $v_p\in \bar A:= \{w_p\in  A^*   : \, \mu(p)\beta(w_p)\omega(w_p) \geq  F_0^2(w_p)  \}$  defines  a Lorentz-Finsler metric.
\end{cor}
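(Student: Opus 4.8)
The plan is to reduce the statement to the previous corollary (the one asserting that, for a conic Finsler metric $\cF$ whose shifted sections $\hat\Sigma_p\cap\omega_p^{-1}(1)$ are non-empty, transverse and compact, the function \eqref{e_ejemplo} is a Lorentz-Finsler metric), applied with the conic Finsler metric $\cF=F_0^2/\beta$ on $A^*=\{w_p\in TM:\beta(w_p)>0\}$ (which is indeed a conic Finsler metric by \cite[Cor. 4.2]{JavSan11}) and with the one-form $\mu\omega$ playing the role of $\omega$. Thus everything reduces to choosing a positive function $\mu:M\rightarrow\R^+$ such that, at each $p$, the affine hyperplane $(\mu(p)\omega_p)^{-1}(1)$ meets the Kropina indicatrix $\hat\Sigma_p=\cF^{-1}(1)\cap T_pM$ in a non-empty, transverse and compact set; the previous corollary then yields the conclusion and fixes the domain $\bar A$ exactly as stated.

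First I would record the convexity and compactness of the Kropina indicatrix. Since $F_0^2$ is convex and $\beta_p$ is linear, the sublevel set $\hat B_p:=\{v\in T_pM: F_0(v)^2\leq\beta_p(v)\}$ is convex; it is compact (on the $F_0$-unit sphere the linear form $\beta_p$ is bounded, so $\hat B_p$ is bounded and closed), and its only non-smooth point is the origin, which lies on $\partial\hat B_p$ as a limit point, with $\hat\Sigma_p=\partial\hat B_p\setminus\{0\}$ the smooth strongly convex indicatrix of $\cF$. In particular $\overline{\hat\Sigma_p}=\partial\hat B_p=\hat\Sigma_p\cup\{0\}$ is compact. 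Consequently the \emph{compactness} of the intersection is automatic for any admissible $\mu$: the affine hyperplane $(\mu\omega_p)^{-1}(1)$ avoids $0$, so its intersection with the compact set $\overline{\hat\Sigma_p}$ is a compact subset of $\hat\Sigma_p$.

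Next I would exploit the hypothesis $\omega_p\neq\lambda\beta_p$ for all $\lambda\leq0$. A short linear argument shows this is equivalent to the overlap of the open half-spaces $\{\beta_p>0\}\cap\{\omega_p>0\}\neq\emptyset$: if these half-spaces were disjoint one gets $\ker\beta_p\subset\ker\omega_p$ and hence $\omega_p=\lambda\beta_p$ with $\lambda\leq0$ (the case $\lambda=0$ being $\omega_p\equiv0$). So there is a direction $u_p$ with $\beta_p(u_p)>0$ and $\omega_p(u_p)>0$. For small $t>0$ one has $tu_p\in\mathrm{int}\,\hat B_p$ (indeed $F_0(tu_p)^2<\beta_p(tu_p)$ for $0<t<\beta_p(u_p)/F_0(u_p)^2$) and $\omega_p(tu_p)>0$; hence for every $\mu(p)$ large enough the hyperplane $(\mu(p)\omega_p)^{-1}(1)=\{\omega_p=1/\mu(p)\}$ crosses this segment and therefore meets the interior of $\hat B_p$. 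A supporting-hyperplane argument then gives transversality and non-emptiness simultaneously: a hyperplane $H$ with $0\notin H$ that meets $\mathrm{int}\,\hat B_p$ cannot be a supporting hyperplane at any boundary point, so it is transverse to the smooth strongly convex $\hat\Sigma_p$ at every point of $H\cap\hat\Sigma_p$, while $H\cap\hat B_p$ is a compact convex body whose relative boundary $H\cap\hat\Sigma_p$ is a non-empty topological sphere.

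Finally I would globalize $\mu$ exactly as in the proof of Cor. \ref{c_examp}: around each $p$ one can pick, by continuity of the data, a neighborhood $U_p$ and a threshold after which every large enough constant works for all $q\in U_p$; patching these with a partition of unity produces a smooth $\mu:M\rightarrow\R^+$ for which the three required properties hold at every point, and the previous corollary applies. The main obstacle is the control near the puncture of the Kropina indicatrix at the origin: one must verify that the hypothesis $\omega_p\neq\lambda\beta_p$ ($\lambda\leq0$) really forces the hyperplane to enter the convex body from the correct side (through the half-space where $\beta_p$ and $\omega_p$ are simultaneously positive) rather than degenerating toward the missing vertex at $0$; once this overlap of half-spaces is established, transversality and compactness follow from the convex geometry described above.
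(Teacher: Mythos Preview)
Your proposal is correct and follows essentially the same route as the paper: reduce to the previous corollary by checking that, for large $\mu$, the affine hyperplane $(\mu\omega_p)^{-1}(1)$ meets the compact closure $\hat\Sigma_p\cup\{0_p\}$ of the Kropina indicatrix in a non-empty, transverse, compact set, and then globalize $\mu$ by a partition of unity. The paper's proof is terser and splits into the cases $\omega_p$ not proportional to $\beta_p$ versus $\omega_p=\lambda\beta_p$ with $\lambda>0$, whereas you unify them via the equivalent ``overlap of half-spaces'' observation; but the underlying convex-geometric mechanism (the hyperplane enters the interior of the convex body $\hat B_p$, hence cannot support it, hence is transverse to its strongly convex boundary) is the same.
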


\begin{proof}
The indicatrix $ \hat \Sigma_p  $ of $F_0^2/\beta$ at each $p$ is a strongly convex  hypersurface and, moreover,  $\{0_p\} \cup  \hat \Sigma_p $ is a compact hypersurface which lies on one side of  $\ker \beta$.  Thus, if $\omega_p$ is not proportional to $\beta_p$, its kernel $\omega_p^{-1}(0)$ intersects $\Sigma_p$ transversely and, for big $\mu>0$, the intersection $\hat \Sigma_p  \cap (\mu\omega_p)^{-1}(1)$ is both, transversal and compact; clearly, this also holds when
 $\omega_p = \lambda \beta_p$ for $\lambda> 0$. So, the result follows as in 
 Cor.~\ref{c_examp}.
\end{proof}
Th. \ref{t_examp} can be used in several situations as the following.

\begin{exe}[Perturbations of classical Lorentz metrics] Let $(M,g_L)$ be a time-orientable Lorentz manifold $(-,+,\dots , +)$ with associated Lorentz-Finsler metric $L(v)=-g_L(v,v)$ (recall that we assume $L$ positive on the timelike directions). Choosing any timelike unit vector field $T$, one can define the Riemannian metric  
$$g_R(v,w):=g_L(v,w)-2 g_L(v,T)g_L(w,T)/g_L(T,T).$$ In terms of the one-form $\omega(v)=\sqrt{2}\, g_L(v,T)/\sqrt{|g_L(T,T)}|$, one has:
\[g_L(v,w)=g_R(v,w)-\omega(v)\omega(w), \qquad \hbox{i.e.} \quad L(v)=\omega(v)^2-g_R(v,v).\]
This last expression can be seen as a particular case of  Th. \ref{t_examp} taking $g_R$ as $F^2$. Small perturbations of  $g_R$ will transform it into a Finsler metric whose indicatrix retains the conditions of transversality and compactness in that theorem, yielding a 
Lorentz-Finsler metric 
 not associated with a classical Lorentz metric (compare with \cite[\S 5.A]{HP17}). 

Such perturbations can be obtained in several ways. 
 For example, a Randers perturbation can be obtained by adding a one-form $\mu \tilde\omega$
(i.e., $L(v)=\omega(v)^2-(\sqrt{g_R(v,v)}+ \mu\tilde\omega(v))^2$) where,  once the one-form 
$\tilde \omega$ is prescribed, the function  $\mu>0$ is  chosen  small enough  to make Th. \ref{t_examp} applicable.   
More generally, for any Finsler metric $\cF$ and small $\mu>0$ we can add $\mu \cF$ (even relaxing the positive definiteness of its fundamental tensor into  
positive semi-definiteness,  recall \cite[Th. 4.1]{JavSan11}), that is, 
$$L(v)=\omega(v)^2-(\sqrt{g_R(v,v)}+\mu \cF(v))^2.$$ Such an $\cF$ is arbitrary and can be generated, for example with norms of the type $\cF(x_1,x_2,\ldots,x_n)=\sqrt[r]{x_1^r+\ldots+x_n^r}$, for even $r$  (it is not difficult to check that its fundamental tensor is positive semi-definite).
\end{exe}

\subsection{Stationary  and static  Finsler spacetimes} \label{ex_stationary}
A Finsler spacetime is {\it stationary} when it admits a timelike Killing vector field $K$  (also called {\em stationary}),  where {\em Killing} means that its (local) flow 
preserves\footnote{ Given two Finsler spacetimes $(M,L), (M',L')$ a isometry $\phi: M\rightarrow M'$  is a diffeomorphism which preserves the metrics ($\phi^*L'=L$) and, then, the corresponding cones ($\phi_*\C=\C'$).
In particular, the flow of a Killing vector field preserves the cone structure and, so, it is also an {\em anisotropically conformal} vector field in a natural sense (recall Th. \ref{t_anisotropic}).
See \cite[\S 2.9]{Aniso16}
for further descriptions in terms of a Lie derivative.  }  $L$.   A stationary  Finsler spacetime is {\it static with respect to the timelike Killing field $K$}  (which is then also called the {\em static vector field})  if its orthogonal distribution $K^\perp$ is integrable, where
\begin{equation}\label{e_ortog}
K^\perp=\{ v\in TM:  g_K(K,v)=0\},   \end{equation}
being $g$  the fundamental tensor of $L$.  Clearly,  if $L$ is static with respect to $K$, then it is also static with respect to $\lambda K$,  whenever $\lambda$ is a positive constant.

In the case that $L$ is  stationary and it comes from a classical Lorentz metric $g$, this can be written locally as a {\em standard stationary spacetime}, 
\begin{equation}\label{e_classical_stationary}
g_{(t,x)}=-\Lambda(x) dt^2 + 2\alpha_x dt +(g_0)_x \qquad (t,x)\in  (a,b)\times S
\end{equation}
where, with natural identifications, $\Lambda>0, \alpha$ and $g_0$ are, resp.  a function, a one-form and a Riemannian metric on the factor $S$ of the local product $M \equiv (a,b)\times S$ 
and $K\equiv \partial_t$; moreover, if $g$ is static with respect to $K=\partial_t$, then it 
can be written as {\em a standard static 
spacetime} i.e., as above with $\omega\equiv 0$ 
(see for example \cite[Chapter 12]{ON}  or \cite[\S 7.2]{SachsWu}). 
Such a standard expression has been used sometimes  to generalize classical spacetimes into Lorentz-
Finsler ones just replacing the metric $g_0$ in \eqref{e_classical_stationary} with a Finsler one $F_0$ (see \cite{CaStan16}). 
 However, such Lorentz-Finsler metrics share the lack of smoothability of product Finsler manifolds (in our case, they are not smooth on the section $\R \times {\bf 0}$ of the tangent bundle,  see Prop. \ref{l_G});   this fact motivates the following subsection.

\subsubsection{A simple construction of  (smooth) stationary  Finsler spacetimes.}  Th. \ref{t_examp}  allows us to construct easily smooth Lorentz-Finsler metrics which are stationary or static, according to our (natural) definition. Namely, consider the product manifold 
$M=\R\times S$, 
the fiber bundle $\pi^*_M(S)$ over $S$ obtained as the   pull-back   of $\pi:TM\rightarrow M$ through the inclusion $i:S\rightarrow M,  x\mapsto (0,x)  $:
\begin{equation}\label{fiberbundle}
    \xymatrix{ \pi_M^*(S) \ar[d]_{\pi^*_M} & TM \ar[d]^{\pi} \\
               S \ar[r]^{i}  & M  }
\end{equation}
and take  any classical Finslerian metric $\cF$ and one-form $\omega$ in the bundle $\pi^*_M(S)$ such that $\omega(\partial_t) >\cF(\partial_t) $ (this condition can be ensured just by starting with any $\tilde \omega$ which does not vanish on $\partial_t$ and re-scaling to $\omega:=\mu \tilde \omega$, where $\mu= 2 \cF(\partial_t)/\tilde \omega(\partial_t)$).
Now define, on $\bar A\subset TM$: 
\[L_{(t,x)}(v)=  \omega_x(v_0)^2-\cF_x(v_0)^2,  \; 
\forall v\in \bar A:=\{w\in  TM \setminus {\bf 0} :\omega( w_0  )^2-\cF(w_0)^2\geq 0\}, 
\]
where,  for any $v\in T_{(t,x)}M$,  $v_0$ denotes the tangent vector at $(0,x)$ (and thus, in the pulled-back bundle) obtained   by 
 moving $v$ with the flow of $\partial_t$. 
As the conditions in   Th. \ref{t_examp} are fulfilled,  $L$ becomes a Lorentz-Finsler metric, which is stationary by construction.  Easily, $L$ is also static if $TS$ is both, the Kernel of $\omega$ and the $g_{\partial_t}$-orthogonal of $\partial_t$. Notice that the property ``the  $g_K$-orthogonal space to $K$ must be the tangent space to $S$'' can be interpreted geometrically as ``the tangent space to the indicatrix of $\cF$ at $K$ is parallel to the tangent space to $S$''.

\subsubsection{General local characterization and constructions.}\label{s_characterzitation static}  Next, the local characterization  \eqref{e_classical_stationary} of any  classical stationary spacetime, will be properly generalized to the stationary Lorentz-Finsler case. 
Given the Killing vector field $K$ and $p_0\in M$, choose any hypersurface $S$  with compact closure embedded in $M$, transverse to $K$ which contains $p_0$,  and use the flow of $K$ to smoothly split $M$ as $(-\epsilon,\epsilon)\times S$ around $p_0$  for some $\epsilon>0$.  Then,  one can define the Lorentz-Finsler metric on the fiber bundle $\pi_M^*:\pi^*_M(S)\rightarrow S$ introduced in \eqref{fiberbundle} as  
$L_{(t,x)}(v)=L^S_{x}(v_0)$, where now $L^S$ is the pullback metric from $L$ on   $\pi^*_M(S)$. Choosing $S$ not only transversal to $K$ but also to its  cone $\C$ (i.e. $TS\cap \bar A=\emptyset$),   the cone $\C^S$ of $L^S$ can be described with a triple $(\Omega,T,F)$ where  $T=K$ and $\Omega= dt$ (being $t: (-\epsilon,\epsilon)\times S \rightarrow (-\epsilon,\epsilon)$ the natural projection). In the static case, $S$ can  also be chosen as an integral manifold of $K^\perp$.

 It is worth pointing out that a similar construction  allows one to construct  locally any stationary or static Finsler spacetime on $M$, in an explicit way.   Namely, 
  as in the last paragraph, consider a (precompact) hypersurface $S$ and a transverse vector field $K$ in such a way that $M$  splits as $(-\epsilon,\epsilon)\times S$ in a smooth way. Then any Lorentz-Finsler metric $L^S$
  on the fiber  bundle\footnote{Observe that $L^S$ can be constructed by any of the procedures described along the present paper for  the construction of $L$ on the whole $M$, including the general procedure for $L$ in Th. \ref{t_carac} below, as emphasized in Rem. \ref{r_fibrado}.}   $\pi_M^*:\pi^*_M(S)\rightarrow S$ with $K|_S$ 
  in its domain $A$ and such that  $TS\cap \bar A=\emptyset$, 
  can be extended to a Lorentz-Finsler metric on $(-\epsilon,\epsilon)\times S$ using the flow of $K=\partial_t$, namely, $L_{(t,x)}=L^S_x$. 
Moreover, in order to construct a static Lorentz-Finsler metric on $(-\epsilon,\epsilon)\times S$, we can proceed as follows. For each $x\in S$, the strong convexity of the indicatrix $\Sigma_x^S$ of $L^S$ implies that there is a unique point $u_x \in \Sigma_x^S$ such that the hyperplane $T_{u_x}\Sigma_x^S$ (tangent to the indicatrix at $u_x$) is parallel to $T_xS$. Then, $K$ will be static if (and only if) each $K_x$ is in the half-line spanned by $u_x$ for every $x\in S$.  In  particular, given any Lorentz-Finsler metric $L^S$ on the fiber bundle with  $TS\cap \bar A=\emptyset$, one can choose $K_x=u_x$ for all $x\in S$ and, then, $K$ will be unit and static.

\subsubsection{ Standard stationary and static Finsler spacetimes}  The previous constructions on $(-\epsilon,\epsilon)\times S$ can be extended  trivially  to $\R\times S$ by using the flow of $K=\partial_t$.  This justifies   the following generalization of the  notion 
of {\em standard} stationary or static for classical spacetimes, avoiding   problems of smoothability in  the formal extension of the expression \eqref{e_classical_stationary}. 

\begin{defi} A {\em standard stationary} Finsler spacetime is a product manifold $M=\R\times S$ endowed with a Lorentz-Finsler metric $L$ such that the  natural vector field $\partial_t \; (\equiv (1,0))$ is stationary  and  $\bar A$ (determined by its cone structure $\C$)  does not intersect the distribution induced by $TS$ on $M$.

Moreover, when this distribution is equal to the orthogonal one  $\partial_t^\perp$ (computed as in \eqref{e_ortog}), the Finsler spacetime is also called  
 {\em standard static}.
	\end{defi}
Notice that the construction in the second part  of  Subsection  \ref{s_characterzitation static} provides a way to generate (all) standard stationary and static spacetimes.
Moreover,   
the characterization of stationarity provided in the first part of that  subsection 
can be summarized as follows:

\begin{prop}
Every stationary (resp. static) Finsler spacetime is locally isometric to a standard stationary (resp. standard static) one.
\end{prop}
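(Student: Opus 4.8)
The plan is to realize, around an arbitrary point $p_0\in M$, the explicit splitting announced in Subsection~\ref{s_characterzitation static} and to verify that it is a standard (resp. standard static) model. First I would fix a timelike Killing field $K$ (which exists by stationarity) and choose an embedded hypersurface $S\ni p_0$, with precompact closure, whose tangent space $T_{p_0}S$ is spacelike, i.e. $T_{p_0}S\cap\bar A_{p_0}=\{0\}$. Such a hyperplane exists by Prop.~\ref{p_2.3}(i): the salient convex cone $\bar A_{p_0}$ lies in one closed half-space bounded by a hyperplane $\Pi_0$ through $0$ that does not meet $\C_{p_0}$, so $\Pi_0\cap\bar A_{p_0}=\{0\}$, and I take $T_{p_0}S=\Pi_0$. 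Since $K_{p_0}\in A_{p_0}$ is transverse to this spacelike hyperplane, shrinking $S$ guarantees both that $K$ is transverse to $S$ and, by continuity, that $TS\cap\bar A=\emptyset$ along $S$.

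Next I would use the flow $\Phi$ of $K$ to define $\Psi:(-\epsilon,\epsilon)\times S\to M$, $\Psi(s,x)=\Phi_s(x)$. Transversality of $K$ to $S$ and the inverse function theorem make $\Psi$ a diffeomorphism onto a neighbourhood $U$ of $p_0$ for a suitable $\epsilon>0$ and a suitably small $S$; in these coordinates $K=\partial_t$ and $\Phi_\tau$ becomes the translation $(s,x)\mapsto(s+\tau,x)$. I would then transport $L$ by $\Psi$ to a Lorentz-Finsler metric $\tilde L$ on $(-\epsilon,\epsilon)\times S$. Since $K$ is Killing, $\Phi_\tau^{*}L=L$; as the differential of a $t$-translation is the identity on $T(\R\times S)$, this reads $\tilde L_{(s,x)}=\tilde L_{(s+\tau,x)}$, so $\tilde L$ is $t$-independent and has exactly the form $\tilde L_{(t,x)}(v)=L^S_x(v_0)$ of Subsection~\ref{s_characterzitation static}. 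Extending $\tilde L$ to all of $\R\times S$ by $t$-independence gives a Lorentz-Finsler metric for which $\partial_t$ is timelike and Killing and whose cone closure avoids the distribution induced by $TS$ (inherited from $TS\cap\bar A=\emptyset$, which persists because $\Phi$ preserves $\bar A$). Thus $\partial_t$ is stationary, the model is standard stationary, and $\Psi$ exhibits $(U,L)$ as isometric to an open subset of it, proving the stationary case.

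For the static case I would make one change: choose $S$ to be a precompact piece of an integral leaf of the integrable distribution $K^\perp$ of \eqref{e_ortog} through $p_0$, so that $TS=K^\perp$; note $K\notin K^\perp$ because $g_K(K,K)=L(K)>0$, hence $S$ is still transverse to $K$. The key verification is that this $S$ is again spacelike, i.e. $K^\perp\cap\bar A=\{0\}$, equivalently $g_K(K,w)>0$ for every causal $w\neq0$. I would obtain this from Prop.~\ref{propiedades}: normalizing $L(K)=1$ so that $K\in\Sigma$, Lemma~\ref{l(i)} identifies the affine tangent hyperplane of $\Sigma$ at $K$ with $\{u:g_K(K,u)=1\}$, and the proof of parts (vi)--(vii) shows precisely that this affine hyperplane is crossed transversely, and exactly once, by every radial direction of $\bar A$; since the radial direction $\{\lambda w:\lambda>0\}$ of a causal $w$ meets $\{g_K(K,\cdot)=1\}$ at some $\lambda>0$, we get $g_K(K,w)=1/\lambda>0$. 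With $TS=K^\perp$ spacelike I would rerun the previous construction; because $\Phi$ preserves $g$, hence $g_K$ and $K$, it preserves $K^\perp$, so the transported distribution equals $\partial_t^\perp$ and the model is standard static.

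The main obstacle is exactly this spacelike check $K^\perp\cap\bar A=\{0\}$: unlike the Lorentzian case it is not automatic from the signature of $g_K$, because ``causal'' refers to the Finsler cone $\bar A$ (governed by the tensors $g_w$) rather than to $g_K$, and one must route the argument through the transversality of the tangent hyperplane to the indicatrix established in Prop.~\ref{propiedades}. The remaining steps—the flow-box splitting and the passage from the Killing condition to $t$-independence—are routine once the correct hypersurface $S$ has been selected.
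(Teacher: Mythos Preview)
Your proposal is correct and follows essentially the same route as the paper's \S\ref{s_characterzitation static}: choose a spacelike slice $S$ transverse to $K$ (resp.\ an integral leaf of $K^\perp$), split via the flow of $K$, and read off $t$-independence from the Killing condition. You actually supply more than the paper does, since the paper asserts without justification that in the static case one may take $S$ to be an integral manifold of $K^\perp$ while still satisfying $TS\cap\bar A=\emptyset$, whereas you give a clean argument for $g_K(K,w)>0$ on causal $w$ via the transversality of the affine tangent hyperplane $T_K\Sigma$ to all radial directions of $\bar A$ established in the proof of Prop.~\ref{propiedades}(vi)--(vii).
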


 Finally, recall that the  preservation of the cone $\C$ occurs naturally for conformal fields
(see \cite{Voicu18} for a recent study).   This   leads naturally to the notion of conformastationary Finsler spacetime (extending  the classical metric case),  where the  ideas introduced above can also be applied.


\subsection{New examples from anisotropically conformal ones 
}\label{s4.2}
 Trivially, new  examples of Lorentz-Finsler metrics can be obtained from one, $L$,  by means of an anisotropically conformal change, i.e., multiplying $L$ by a suitable positive smooth   0-homogeneous  function $\mu: \bar A\rightarrow  \R$.  In order to ensure that $\mu$ is suitable as an isotropic factor,  
 $\mu$ can be  chosen,    for example,  as a function which is  $C^2$-close enough  to a constant function $c>0$.
Next, we will see that  further  new examples can be obtained  by combining different Lorentz-Finsler metrics with the same cones  and using pseudo-Finsler metrics and one-forms.  We 
will do this  by extending to pseudo-Finsler metrics a general  result in \cite[Th. 4.1]
{JavSan11} for Finsler metrics  and by using     their   angular metrics. 

In the following, 
 $A^*$ will be a conic domain 
 and 
$\tF_k: A\rightarrow  (0,\infty), k=1,\dots , \n$,  smooth  {\em positive} one-homogeneous functions. 
Even though we will apply our results to the case when all $\tF_k$ are Lorentz-Finsler, this condition will not be imposed a priori. So, we will say that such an $\tF_k$ is  {\em pseudo-Finsler}, emphasizing that the corresponding fundamental tensor $g^k$  defined in \eqref{fundtens}
 might  become  degenerate  
 (that is, so may be
the fundamental tensor  $g^k_v$ of $\tF_k$ at the tangent vector $v\in A^*$). This generality allows a better comparison with results in the Finslerian case. 
The so-called {\em angular metrics} (see \cite[Eq. 3.10.1]{BaChSh00}) are determined by
\begin{equation}\label{angularmetric}
h_v^k(w,w)=g_v^k(w,w)-\frac{1}{\tF^2_k(v)}g_v^k(v,w)^2, \qquad  \forall v\in A^*, w\in T_{\pi(v)}M. 
\end{equation}
Let $\beta_{\n+1},\beta_{\n+2},\ldots,\beta_{\n+\m}$
 denote $\m$ one-forms on $M$.  The indexes $k,l$  will run from
$1$ to $\n$ while the indexes $\mu, \nu$ will label the  one-forms  and run from $\n+1$ to
$\n+\m$; the indexes $r, s$ will run from $1$ to $\n+\m$. 
Let $B$ be a conic open subset of $\R^{\n+\m}$ and consider a
continuous  function $\varphi: B\times M\rightarrow \R$, which
satisfies:
\begin{itemize}
\item[(a)] $\varphi$ is smooth and positive away from $0$, i.e., on $(B\times
M) \setminus (\{0\}\times M)$.

\item[(b)] $\varphi$ is $B$-positively homogeneous of degree 2, i.e.,
$\varphi(\lambda x, p)=\lambda^2 \varphi(x,p)$ for all $\lambda>0$ and all
$(x,p)\in B\times M$.
\end{itemize}
The comma will denote derivative with respect to the corresponding
coordinates of $\R^{\n+\m}$, namely, we will denote by $\varphi_{,rs}$  the
second partial derivative of $\varphi$ with respect to the $r$-th and
$s$-th variables.
Finally, consider the function $L: A^*\subseteq TM\rightarrow \R$
defined as: 
\begin{equation} \label{ef2}
L(v)=\varphi(\tF_1(v), 
\ldots,\tF_\n(v),\beta_{\n+1}(v),
\ldots,\beta_{\n+\m}(v),\pi(v)).
\end{equation}
\begin{prop}\label{central}
For any $\varphi$ satisfying (a) and (b) as above, and $L$ as in
\eqref{ef2}, the function $L$ 
is a pseudo-Finsler  metric with domain $A^*$ and fundamental
tensor:
\begin{multline}
 2 g_v(w,w)
 =\sum_k\frac{\varphi_{,k}}{\tF_k(v)} h^k_v(w,w)
 + \sum_{k,l}\frac{\varphi_{,kl}}{ \tF_k(v)\tF_l(v)}g_v^k(v,w)g_v^l(v,w)
\\+2\sum_{k,\mu}\frac{\varphi_{,k\mu}}{\tF_k(v)}g_v^k(v,w)\beta_\mu(w)+\sum_{\mu,\nu}\varphi_{,\mu\nu}
\beta_\mu(w)\beta_\nu(w).\label{fundamentalTensor2}
\end{multline}
\end{prop}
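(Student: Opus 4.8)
The plan is to verify directly the two defining features of a pseudo-Finsler metric, namely smoothness and two-homogeneity of $L$, and then to obtain \eqref{fundamentalTensor2} by differentiating $s\mapsto L(v+sw)$ twice and applying the chain rule to $\varphi$.

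First I would dispose of regularity and homogeneity. For $v\in A^*\setminus\mathbf 0$ the evaluation $v\mapsto(\tF_1(v),\dots,\tF_\n(v),\beta_{\n+1}(v),\dots,\beta_{\n+\m}(v))$ takes values in $B$ (so that $L$ is defined) and never equals $0\in\R^{\n+\m}$, because $\tF_k(v)>0$ on $A^*$; since the $\tF_k$ are smooth away from the zero section, the $\beta_\mu$ are linear, $\pi$ is smooth and $\varphi$ is smooth on $(B\times M)\setminus(\{0\}\times M)$, the composition $L$ is smooth on $A^*\setminus\mathbf 0$. Two-homogeneity is immediate: for $\lambda>0$ one has $\tF_k(\lambda v)=\lambda\tF_k(v)$, $\beta_\mu(\lambda v)=\lambda\beta_\mu(v)$ and $\pi(\lambda v)=\pi(v)$, so the $B$-argument scales by $\lambda$ at fixed base point, and hypothesis (b) gives $L(\lambda v)=\lambda^2L(v)$.

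Since \eqref{fundamentalTensor2} records the quadratic form $g_v(w,w)$ (the full symmetric tensor being recovered by polarization), it suffices to compute $g_v(w,w)=\tfrac12\frac{d^2}{ds^2}L(v+sw)\big|_{s=0}$. Fix $v$ and $w\in T_{\pi(v)}M$; since $v+sw$ remains in the fibre over $\pi(v)$, the base-point entry of $\varphi$ is constant in $s$. Writing $a_k(s)=\tF_k(v+sw)$ and $a_\mu(s)=\beta_\mu(v+sw)$, the chain rule yields
\[
\frac{d^2}{ds^2}L(v+sw)\Big|_{s=0}=\sum_{r,t}\varphi_{,rt}\,a_r'(0)a_t'(0)+\sum_r\varphi_{,r}\,a_r''(0),
\]
with $r,t$ ranging over $1,\dots,\n+\m$, so everything reduces to identifying the first and second derivatives of the $a_r$ at $s=0$.

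The only non-routine step concerns the $\tF_k$-entries. Using the homogeneity identity $d(\tF_k^2)_v(w)=2g^k_v(v,w)$ --- an identity valid for any smooth two-homogeneous function (it is the computation behind Lemma \ref{l(i)}) and hence requiring no nondegeneracy of $g^k$ --- together with $d(\tF_k^2)_v(w)=2\tF_k(v)\,d\tF_k|_v(w)$, I obtain $a_k'(0)=d\tF_k|_v(w)=g^k_v(v,w)/\tF_k(v)$. For the second derivative I would differentiate $\tF_k^2(v+sw)=a_k(s)^2$ twice and compare with $\frac{d^2}{ds^2}\tF_k^2(v+sw)|_{s=0}=2g^k_v(w,w)$; this gives $g^k_v(w,w)=a_k'(0)^2+\tF_k(v)a_k''(0)$, and solving for $a_k''(0)$ while recognizing the angular metric \eqref{angularmetric} produces $a_k''(0)=h^k_v(w,w)/\tF_k(v)$. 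The $\beta_\mu$-entries are trivial, $a_\mu'(0)=\beta_\mu(w)$ and $a_\mu''(0)=0$, since $\beta_\mu$ is linear. Substituting these four values into the chain-rule expansion, splitting the double sum into its $(k,l)$, $(\mu,\nu)$ and mixed $(k,\mu)$ blocks, using $\varphi_{,k\mu}=\varphi_{,\mu k}$ to produce the factor $2$ in the mixed block, and multiplying by $2$, I recover \eqref{fundamentalTensor2}. I expect the main difficulty to be only bookkeeping: keeping the index ranges $k,l\le\n$ and $\mu,\nu>\n$ separated and ensuring that all quotients stay well defined through the division by $\tF_k(v)>0$, even where $g^k$ degenerates.
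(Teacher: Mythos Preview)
Your proof is correct and follows exactly the approach the paper defers to (the chain-rule computation behind \cite[formula (4.7)]{JavSan11}): differentiate $s\mapsto L(v+sw)$ twice, identify $a_k'(0)=g^k_v(v,w)/\tF_k(v)$ and $a_k''(0)=h^k_v(w,w)/\tF_k(v)$ via the homogeneity identities, and sort the resulting terms by index type. The only cosmetic slip is the phrase ``multiplying by $2$'' at the end; the chain-rule expansion already equals $2g_v(w,w)$, so no further factor is needed to match \eqref{fundamentalTensor2}.
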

\begin{proof}
It is obtained in an analogous way  to formula (4.7) in \cite{JavSan11}.
\end{proof}

 Now, let us focus in the Lorentz-Finsler case. Recall that,  in this case,  $L=\tF^2$ can be extended to $\bar A^*$ 
 but the angular metric cannot. 
 So, as a previous algebraic question:

 \begin{lemma}\label{lorentzchar}
  Let $g$ be a symmetric bilinear form on $V$  
admitting  a hyperplane $W$ such that $g|_{W\times W}$ is negative semi-definite with radical of dimension at most 1.  If  there exists $w\in V\setminus W$ such that $g(w,w)>0$ and $g(w,v)\not=0$ for all $v\not=0$ in the radical of $g|_{W\times W}$,  then  $g$  is non-degenerate with  index $n-1$.
 \end{lemma}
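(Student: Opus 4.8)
The plan is to reduce the statement to a signature computation for the Gram matrix of $g$ in a well-chosen basis, distinguishing the two possibilities for the dimension of the radical $R$ of $g|_{W\times W}$. In both cases I would exploit that, since $w\notin W$, one has $V=W\oplus\mathrm{span}(w)$, together with the elementary bound $\mathrm{index}(g)+\mathrm{coindex}(g)\le n$ (a negative-definite and a positive-definite subspace meet only at $0$, and $n=\mathrm{index}+\mathrm{coindex}+\mathrm{nullity}$ by Sylvester).

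First I would dispose of the case $\dim R=0$, i.e. $g|_{W\times W}$ negative definite. Then $W$ is a negative-definite subspace of dimension $n-1$, so $\mathrm{index}(g)\ge n-1$, while $\mathrm{span}(w)$ is positive definite because $g(w,w)>0$, so $\mathrm{coindex}(g)\ge 1$. The bound above forces $\mathrm{index}(g)=n-1$, $\mathrm{coindex}(g)=1$ and nullity $0$; hence $g$ is non-degenerate of index $n-1$. Note that this case does not even use the transversality hypothesis on the radical.

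The substantive case is $\dim R=1$, and this is where I expect the only real work. Let $v_0$ span $R$ and complete it to a basis $v_0,e_1,\dots,e_{n-2}$ of $W$ with $g(e_i,e_j)=-\delta_{ij}$ and $g(v_0,\cdot)|_W=0$ (possible since $g|_W$ is negative semi-definite with one-dimensional radical). Writing $b:=g(w,v_0)$ and $c_i:=g(w,e_i)$, I would replace $w$ by $w':=w+\sum_i c_i e_i$, which leaves $g(w',v_0)=b$ unchanged, kills the mixed terms $g(w',e_j)=0$, and keeps $g(w',w')=g(w,w)+\sum_i c_i^2>0$. In this basis the Gram matrix becomes block diagonal: the block on $\mathrm{span}(e_1,\dots,e_{n-2})$ is $-I_{n-2}$, contributing $n-2$ negative directions, and the remaining block on $\mathrm{span}(v_0,w')$ is
\[
\begin{pmatrix} 0 & b \\ b & g(w',w') \end{pmatrix},
\]
with determinant $-b^2$.

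The hypothesis that $g(w,v)\neq 0$ for the nonzero radical vector $v=v_0$ is precisely what makes $b\neq 0$, so this $2\times 2$ block is non-degenerate with negative determinant, i.e. it carries one positive and one negative eigenvalue. Adding the contributions, $g$ has $n-1$ negative and one positive direction and trivial radical, which is the claim. I expect the \emph{main obstacle} to be exactly this one-dimensional radical case: the degenerate direction $v_0$ of $g|_W$ must be paired with $w$ to restore non-degeneracy, and it is the assumption $g(w,v_0)\neq 0$ that supplies this pairing, while everything else reduces to diagonalizing away the harmless negative-definite part.
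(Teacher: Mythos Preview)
Your proof is correct and follows essentially the same strategy as the paper's: in the one-dimensional radical case, isolate a two-dimensional block spanned by the radical direction and (a modification of) $w$, check it has Lorentzian signature because $g(w,v_0)\neq 0$ and $g(v_0,v_0)=0$, and observe that the complementary $(n-2)$-dimensional piece is negative definite. The paper does this coordinate-free by setting $P=\mathrm{span}\{w,v\}$, noting $W=v^\perp$, and using the orthogonal splitting $V=P\oplus P^\perp$ with $P^\perp\subset W$ avoiding the radical; you instead diagonalize explicitly and adjust $w$ to $w'$ to make the Gram matrix block-diagonal, but the content is the same.
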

 \begin{proof} 
 Assume that the radical of $g|_{W\times W}$ is spanned by some $v\neq 0$ (otherwise, the result is trivial). Thus, $W=v^\perp$ (the orthogonal of $v$ in $V$) and, by the assumptions on $w$, the plane  $P:= $ span$\{w,v\}$ has Lorentzian signature. Clearly,  $P^\perp  \subset  W\setminus\{v\}$  (so, dim($P^\perp$) $= n-2$ and $P^\perp$ is non-degenerate) and $P\cap P^\perp=\{0\}$.  Then, $V=P\oplus P^\perp$  and the result follows. 
 \end{proof}
 
\begin{prop} \label{indexn-1}
	 If  $L:A\rightarrow \R^+$  is a Lorentz-Finsler metric, with cone $\C$, then
  
\begin{enumerate}[(i)]
\item for each  $v\in  A$,  the angular metric 
\[h_v(u,w)=g_v(u,w)-\frac{1}{L(v)}g_v(v,u)g_v(v,w) \qquad \forall  u,w\in   T_{\pi(v)}M ( \equiv   T_v(T_{\pi(v)}M) ) \] 
is negative semi-definite with radical spanned by $v$, and 

\item   for each $v\in \C$, the one-form $\omega_v=g_v(v,\cdot)$  on $T_{\pi(v)}M$ is non-trivial and 
 the restriction of $g_v$ to $\ker \omega_v$ is negative semi-definite with radical spanned by $v$.
\end{enumerate}
In this case, $\omega_v(w)>0$ for all $\C$-causal vector $w$ independent of $v$.

Conversely, let  $A$ be  a connected open conic subset and $L:A\rightarrow  \R^+$ a (positive  2-homogeneous)  pseudo-Finsler metric  smoothly extendible   as zero to the boundary $\C$ of $A$ in $TM\setminus \bf 0$. If the pseudo-Finsler metric $L$ satisfies $(i)$ and $(ii)$   and there exists  a non-vanishing vector field $X$ contained in $A$, then $L$  is a Lorentz-Finsler metric. 
\end{prop}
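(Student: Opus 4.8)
The plan is to read both directions through Proposition \ref{propiedades} and Lemma \ref{lorentzchar}, the algebraic heart being the splitting of $g_v$ into its angular part plus a rank-one term along $\omega_v=g_v(v,\cdot)$. For the forward direction I would first invoke Lemma \ref{l(i)} to have $g_v(v,v)=L(v)$ and $dL_v=2g_v(v,\cdot)$ on all of $\bar A$. For (i), fix $v\in A$ and decompose any $u\in T_{\pi(v)}M$ as $u=\tfrac{g_v(v,u)}{L(v)}\,v+u^\perp$ with $u^\perp$ in the $g_v$-orthogonal complement $v^\perp$; a one-line computation gives $h_v(u,u)=g_v(u^\perp,u^\perp)$, so (i) is exactly the assertion that $g_v$ is negative definite on $v^\perp=T_v\Sigma_0$ (part (i) of Prop. \ref{propiedades}) together with $h_v(v,\cdot)\equiv 0$. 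For (ii), non-degeneracy of the fundamental tensor on $\C$ gives $\omega_v=g_v(v,\cdot)\neq 0$, while part (iii) of Prop. \ref{propiedades} gives $\ker\omega_v=v^\perp=T_v\C$ and that $g_v|_{T_v\C}$ is negative semi-definite with radical $\mathrm{span}(v)$. Thus (i) and (ii) are reformulations of Prop. \ref{propiedades}.

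For the positivity of $\omega_v$ at $v\in\C$ I would argue by convexity: $\ker\omega_v=T_v\C$ is the tangent hyperplane to $\C$ at $v$, and for any $\xi$ pointing into $A$ one has $\omega_v(\xi)=\tfrac12\,\xi(L)>0$ (the sign used in part (iv) of Prop. \ref{propiedades}). Since $\bar A$ is convex with boundary $\C$, it lies in the closed half-space $\{\omega_v\geq 0\}$, and the last assertion of part (vii) of Prop. \ref{propiedades} identifies $T_v\C\cap\bar A=\{\lambda v:\lambda\geq 0\}$; hence $\omega_v(w)>0$ for every causal $w$ not proportional to $v$. For timelike $v$ the same conclusion for timelike $w$ follows by scaling $w$ radially onto the indicatrix and using the supporting hyperplane of the convex region $\{L\geq L(v)\}$ at $v$.

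The converse is where the real work lies, and the tool is Lemma \ref{lorentzchar}, applied pointwise at each $v\in\bar A$ with $W:=\ker\omega_v$, a hyperplane since $\omega_v\neq 0$ (by (ii) on $\C$, and since $\omega_v(v)=L(v)>0$ on $A$). For $v\in A$, as $g_v(v,\cdot)$ vanishes on $W$ the angular metric restricts to $h_v|_W=g_v|_W$, and (i) forces this to be negative definite, its radical lying in $\mathrm{span}(v)$ which is disjoint from $W$; taking $w=v$, with $g_v(v,v)=L(v)>0$, the radical condition in Lemma \ref{lorentzchar} is vacuous and we conclude $g_v$ is non-degenerate of index $n-1$. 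For $v\in\C$ the angular metric is unavailable since $L(v)=0$, so I use (ii) directly: $g_v|_W$ is negative semi-definite with radical $\mathrm{span}(v)$, and to produce the required $g_v$-timelike direction I choose any $w_0\notin W$ and note that on $P:=\mathrm{span}(v,w_0)$ the matrix of $g_v$ has determinant $-\omega_v(w_0)^2<0$, so $g_v|_P$ is Lorentzian; any $g_v$-positive $w\in P$ then satisfies $w\notin W$ (because $P\cap W=\mathrm{span}(v)$, where $g_v$ vanishes) and $g_v(w,v)=\omega_v(w)\neq 0$, so Lemma \ref{lorentzchar} again yields index $n-1$.

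Having shown that $g_v$ has index $n-1$ at every $v\in\bar A$, I would finish as in Prop. \ref{p_beem}: the non-vanishing field $X\subset A$ singles out the relevant component and ensures each $A_p\neq\emptyset$; non-triviality of $\omega_v=g_v(v,\cdot)$ on $\C$ makes $0$ a regular value of $L$ (via $dL_v=2g_v(v,\cdot)$), so $\bar A$ is a smooth manifold with boundary $\C$; and the smooth extension of $L$ as zero together with the index computation makes each $L_p$ a properly Lorentz-Minkowski norm (Def. \ref{d_LMnorm}), i.e. $L$ is a Lorentz-Finsler metric. I expect the main obstacle to be precisely the boundary case of the converse: at a lightlike $v$ the rank-one splitting degenerates, so one must simultaneously verify the exact radical/hyperplane hypotheses of Lemma \ref{lorentzchar} and manufacture a $g_v$-timelike vector \emph{without} yet knowing the signature, which is exactly what the two-plane determinant computation supplies.
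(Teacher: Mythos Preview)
Your proof is correct and follows essentially the same route as the paper: both directions are read through Prop.~\ref{propiedades} and Lemma~\ref{lorentzchar}, with the key observation that $h_v|_{v^\perp}=g_v|_{v^\perp}$, and the finish via Prop.~\ref{p_beem}. The one place where you are more explicit than the paper is the boundary case of the converse: the paper simply invokes Lemma~\ref{lorentzchar} at $v\in\C$, whereas you actually supply the $g_v$-positive vector $w$ that the lemma requires as hypothesis, via the $2\times 2$ determinant on $P=\mathrm{span}(v,w_0)$; this is a genuine (if small) gap you fill. Your aside about timelike $v$ in the positivity claim is unnecessary, since the statement places $\omega_v$ in the context of (ii), i.e.\ $v\in\C$.
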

\begin{proof} 
To check $(i)$, clearly, $v$ belongs to the radical and $h_v$ is negative definite on the $g_v$-orthogonal space to $v$, as the index of $g_v$ is $n-1$ and $h_v=g_v$ there.  For $(ii)$, apply part $(iii)$ of Prop. \ref{propiedades} and observe that
 the causal vector $w\in T_{\pi(v)}M$ points to the interior of the cone structure, thus $g_v(v,w)=dL_v(w)=w(L)>0$.
  For the converse,  notice  that $A$ cannot intersect the zero section, as $L$ is positive and 2-homogeneous.  Let us see that $g_v$ has index $n-1$. When $L(v)>0$, $h_v=g_v$ in the $g_v$-orthogonal space to $v$, and then $g_v$ is negative definite there;  as  $g_v(v,v)=L(v)>0$,  necessarily, $g_v$ has index $n-1$.  When $L(v)=0$,  Lemma \ref{lorentzchar} yields the nondegeneracy of $g$ at $\C$  and Prop.   \ref{p_beem} concludes.
\end{proof}
 The last part of this proposition can be applied to  pseudo-Finsler metrics, as in
the following consequence (such a result is less trivial than expected even in the classical Finsler case, compare with  \cite[Cor. 4.3]{JavSan11}).  First we will need a technical result. 
\begin{lemma}\label{l_suma}
	Let $ L_1,\ldots, L_\n:   A\rightarrow \R  $  be pseudo-Finsler metrics on $M$ with fundamental tensor possibly degenerate.  Then if $L=(\varepsilon_1\sqrt{L_1}+\ldots+\varepsilon_\n\sqrt{L_\n})^2$, where $\varepsilon_i^2=1$ for $i=1,\ldots,\n$,  its fundamental tensor is given by
	 \begin{equation*}
	  g_v(u,w) 
	 =\sum_k\varepsilon_k\frac{  \sqrt{L(v)} }{\sqrt{L_k(v)}}h_v^k(u,w) + \sum_{k,l}\frac{\varepsilon_k\varepsilon_l}{\sqrt{L_k(v)}\sqrt{L_l(v)}}g_v^k(v,u)g_v^l(v,w),
	 \end{equation*}
	where $g^k$ and $h^k$ are, respectively, the fundamental tensor and the angular metric of $L_k$, and the  angular metric of $L$ is given by
	 \begin{equation}\label{eq:angularm}
	 h_v(u,w)=\sum_{k=1}^\n\varepsilon_k\frac{\sqrt{L(v)} }{\sqrt{L_k(v)}}h_v^k(u,w),
	 \end{equation}
	 for $v\in A$ and $u,w\in T_{\pi(v)}M$.
\end{lemma}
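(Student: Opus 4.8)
The plan is to recognize this lemma as the particular case of Proposition \ref{central} obtained by discarding the one-forms (i.e.\ taking $\m=0$) and choosing the homogeneous function
\[
\varphi(x_1,\dots,x_\n)=\Big(\sum_{k=1}^{\n}\varepsilon_k x_k\Big)^2,
\]
with the one-homogeneous pseudo-Finsler functions $\tF_k:=\sqrt{L_k}$ playing the role of the $\tF_k$ of that proposition. This $\varphi$ is smooth and $B$-positively homogeneous of degree $2$; choosing the conic domain $B\subset\R^\n$ inside $\{S>0\}$, where $S:=\sum_k\varepsilon_k x_k$, it is also positive, and there $\sqrt{L}=S$. Hence Proposition \ref{central} applies and it only remains to evaluate its fundamental-tensor formula \eqref{fundamentalTensor2} for this particular $\varphi$.

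First I would compute the partial derivatives. Writing $S=\sum_k\varepsilon_k x_k$, one gets $\varphi_{,k}=2\varepsilon_k S$ and $\varphi_{,kl}=2\varepsilon_k\varepsilon_l$, while all mixed derivatives involving the (absent) one-forms vanish. Evaluating at $x_k=\tF_k(v)=\sqrt{L_k(v)}$ gives $S=\sum_k\varepsilon_k\sqrt{L_k(v)}=\sqrt{L(v)}$. Substituting $\varphi_{,k}$ and $\varphi_{,kl}$ into \eqref{fundamentalTensor2} and dividing by $2$ yields the stated expression for $g_v(w,w)$; since $g_v$ is a symmetric bilinear form, polarization upgrades the diagonal identity to the claimed formula for $g_v(u,w)$.

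For the angular metric I would start from its definition $h_v(u,w)=g_v(u,w)-L(v)^{-1}g_v(v,u)g_v(v,w)$ and first evaluate $g_v(v,w)$ using the formula just obtained. Here the two standard homogeneity identities do the work: each $v$ lies in the radical of its own angular metric, so $h^k_v(v,w)=0$ and the first sum drops out, while $g^k_v(v,v)=L_k(v)$ turns the second sum into $\sqrt{L(v)}\sum_l\varepsilon_l L_l(v)^{-1/2}g^l_v(v,w)$ after collapsing $\sum_k\varepsilon_k\sqrt{L_k}=\sqrt{L}$. Forming the product $L(v)^{-1}g_v(v,u)g_v(v,w)$ then reproduces \emph{exactly} the double-sum term of $g_v(u,w)$, so subtracting it cancels that term and leaves precisely $\sum_k\varepsilon_k\sqrt{L(v)}/\sqrt{L_k(v)}\,h^k_v(u,w)$, as claimed.

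There is no deep obstacle here: once Proposition \ref{central} is in hand the content is purely computational. The one step that must be handled with care is the final cancellation in the angular-metric computation, which relies simultaneously on $h^k_v(v,\cdot)=0$, on $g^k_v(v,v)=L_k(v)$, and on the identity $\sqrt{L}=\sum_k\varepsilon_k\sqrt{L_k}$ (valid on the domain where the linear combination is positive); keeping track of the signs $\varepsilon_k$ and of the various square roots is the only realistic source of error.
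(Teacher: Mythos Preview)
Your proposal is correct and follows essentially the same route as the paper: apply Proposition \ref{central} with $\varphi(x_1,\dots,x_{\n})=(\sum_k\varepsilon_k x_k)^2$ and $\tF_k=\sqrt{L_k}$, read off $\varphi_{,k}=2\varepsilon_k\sqrt{L}$ and $\varphi_{,kl}=2\varepsilon_k\varepsilon_l$, then obtain the angular metric by the cancellation you describe. The only cosmetic difference is that you make the polarization step and the choice of the conic domain $B\subset\{S>0\}$ explicit.
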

\begin{proof}
	 Write  $L(v)=(\varepsilon_1\tF_1(v)+\ldots +\varepsilon_\n\tF_\n(v))^2$, where $\tF_k(v)=\sqrt{L_k(v)}$ and  apply Prop. \ref{central} with $\varphi(x_1,\ldots,x_\n)=(\varepsilon_1x_1+\ldots+\varepsilon_\n x_\n)^2$.  Then, clearly,  $\varphi_{,k}=2\varepsilon_k \sqrt{\varphi}$,  $\varphi_{,kl}\equiv 2\varepsilon_k\varepsilon_l$ 
	for $k,l=1,\dots,\n$, and
	\begin{equation*}
	2 g_v(u,w) 
	=\sum_k\frac{2\varepsilon_k  \sqrt{L(v)} }{\tF_k(v)}h_v^k(u,w) + \sum_{k,l}\frac{2\varepsilon_k\varepsilon_l}{ \tF_k(v)\tF_l(v)}g_v^k(v,u)g_v^l(v,w),
	\end{equation*}
	for $v\in A$, as required.  In particular, as $g_v^k(v,v)=\tF_k(v)^2$,
	 \begin{multline*}
	 g_v(v,w)  =  \sum_{k,l}\varepsilon_k\varepsilon_l\frac{g_v^k(v,v)g_v^l(v,w)}{ \tF_k(v)\tF_l(v)}  =\sum_{k,l}\varepsilon_l\frac{\varepsilon_k\tF_k(v)}{ \tF_l(v)}g_v^l(v,w)
	\\
	=  \sum_{l}\varepsilon_l\frac{\sqrt{L(v)}}{ \tF_l(v)}g_v^l(v,w). 
	\end{multline*}
	The angular metric of $L$ is then  
	\begin{multline*}
	h_v(w,w)=g_v(w,w)-\frac{1}{L(v)}g_v(v,w)^2\\=\sum_k\varepsilon_k\frac{\sqrt{L(v)}}{\tF_k(v)}h_v^k(w,w) + \sum_{k,l}\varepsilon_k\varepsilon_l\frac{ g_v^k(v,w)g_v^l(v,w) }{ \tF_k(v)\tF_l(v)}
	- 
	\left( \sum_{l}\varepsilon_l\frac{ g_v^l(v,w)}{ \tF_l(v)}\right)^2.
	\end{multline*}
	As 
	the sum of the last two terms vanishes,  we get \eqref{eq:angularm}.
	\end{proof}
\begin{prop}\label{t_suma}
Let $ L_1,\ldots, L_\n:  \bar A\rightarrow [0,+\infty)   $  be Lorentz-Finsler metrics on $M$.  Then $L=(\sqrt{L_1}+\ldots+\sqrt{L_\n})^2$ is also a Lorentz-Finsler metric. 
\end{prop}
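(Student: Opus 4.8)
The plan is to realize $L$ as a positive, $2$-homogeneous pseudo-Finsler metric on the common cone domain $A$ and then invoke the converse part of Proposition~\ref{indexn-1}; the whole task then reduces to checking its hypotheses (i) and (ii), together with the smoothness of $L$ on $\bar A$. Since $L_1,\dots,L_{\n}$ all have domain $\bar A$, they share the same cone structure $\C=\partial A$ and are pairwise anisotropically equivalent (Th.~\ref{t_anisotropic}); in particular each quotient $\mu_{kl}:=L_k/L_l$ extends smoothly and positively to $\C$. Expanding $L=\sum_k L_k+2\sum_{k<l}\sqrt{L_kL_l}$, each $L_k$ is smooth on $\bar A$ and each cross term $\sqrt{L_kL_l}$ is smooth on $\bar A$ by Lemma~\ref{lsmooth}; hence $L$ is smooth on $\bar A$, positive on $A$ and vanishes on $\C$. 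Writing $L=L_1\Phi$ with $\Phi:=(\sum_k\sqrt{\mu_{k1}})^2$ smooth and positive, the identity $dL=\Phi\,dL_1+L_1\,d\Phi$ gives at every $v\in\C$ that $\omega^L_v:=g_v(v,\cdot)=\tfrac12 dL_v=\Phi(v)\,\omega^{L_1}_v$, so $\omega^L_v$ is non-trivial with $\ker\omega^L_v=T_v\C$. A non-vanishing vector field $X$ in $A$ exists by Lemma~\ref{l_2.1}, as $A$ is a cone domain.

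Condition (i) is immediate from Lemma~\ref{l_suma}: with all $\varepsilon_k=1$, formula \eqref{eq:angularm} expresses the angular metric of $L$ as the positive combination $h_v=\sum_k \frac{\sqrt{L(v)}}{\sqrt{L_k(v)}}\,h^k_v$, where each $h^k_v$ is negative semi-definite with radical $\mathrm{span}(v)$ by Proposition~\ref{indexn-1}(i) applied to $L_k$. Hence $h_v$ is negative semi-definite, and if $h_v(w,w)=0$ then every summand vanishes, forcing $h^k_v(w,w)=0$ and so $w\in\mathrm{span}(v)$; thus the radical is exactly $\mathrm{span}(v)$.

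The delicate point, and the main obstacle, is condition (ii) at $v\in\C$, where the angular-metric formula degenerates into an indeterminate $0/0$. I would argue by continuity of the fundamental tensor $g$ on $\bar A$ (legitimate since $L$ is smooth there): for fixed $w\in T_v\C$, $g_v(w,w)=\lim_{\epsilon\to0^+}g_{v'_\epsilon}(w,w)$ along the interior path $v'_\epsilon=(1-\epsilon)v+\epsilon u$, with $u$ a timelike vector at $\pi(v)$. On $A$ one has $g_{v'}(w,w)=h_{v'}(w,w)+\omega^L_{v'}(w)^2/L(v')$; since $L(v'_\epsilon)=c\epsilon+O(\epsilon^2)$ with $c=2\omega^L_v(u)>0$ (as $u$ is timelike) and $\omega^L_{v'_\epsilon}(w)=g_{v'_\epsilon}(v'_\epsilon,w)=O(\epsilon)$ (because $\omega^L_v(w)=0$), the last term tends to $0$. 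As each $h_{v'_\epsilon}(w,w)\le0$, this yields $g_v(w,w)\le0$, i.e.\ negative semi-definiteness of $g_v|_{T_v\C}$. For the radical, if moreover $g_v(w,w)=0$ then $\lim_\epsilon h_{v'_\epsilon}(w,w)=0$; since $h_{v'}=\sum_k\frac{\sqrt{L}}{\sqrt{L_k}}h^k_{v'}$ is a sum of non-positive terms whose coefficients converge to the positive limits $\sum_j\sqrt{\mu_{jk}(v)}$, a squeeze forces $\lim_\epsilon h^k_{v'_\epsilon}(w,w)=0$ for each $k$, and the same rate estimate applied to $L_k$ gives $g^k_v(w,w)=0$; Proposition~\ref{indexn-1}(ii) for $L_k$ then yields $w\in\mathrm{span}(v)$, so the radical is $\mathrm{span}(v)$.

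Having verified (i) and (ii) and the existence of $X$, the converse of Proposition~\ref{indexn-1}—whose core is the algebraic Lemma~\ref{lorentzchar}, applied with $W=\ker\omega^L_v$ and any $w$ with $g_v(v,w)\ne0$ (which automatically produces a positive direction in $\mathrm{span}(v,w)$, since that plane is Lorentzian)—shows that $g_v$ is non-degenerate of index $n-1$ throughout $\bar A$. Therefore $L$ is a Lorentz-Finsler metric. The only genuine care needed is in the two rate estimates controlling $\omega^L_{v'}(w)^2/L(v')$ and its analogue for each $L_k$; everything else is a direct application of the cited results.
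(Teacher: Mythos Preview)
Your argument is correct. The smoothness via Lemma~\ref{lsmooth}, condition (i) via Lemma~\ref{l_suma}, and the non-triviality of $\omega^L_v$ on $\C$ via $L=L_1\Phi$ match the paper's proof exactly. Your rate estimates for condition (ii) are sound: smoothness of $g$ on $\bar A$ gives $\omega^L_{v'_\epsilon}(w)=O(\epsilon)$ for $w\in T_v\C$, and $L(v'_\epsilon)=c\epsilon+O(\epsilon^2)$ with $c>0$ follows from $\omega^L_v(u)>0$; hence $\omega^L_{v'_\epsilon}(w)^2/L(v'_\epsilon)\to 0$, and likewise for each $L_k$. The squeeze for the radical works because each $h^k_{v'_\epsilon}(w,w)$ actually \emph{converges} (to $g^k_v(w,w)\le 0$, by the same rate estimate applied to $L_k$), so $g_v(w,w)=\sum_k c_k(0)\,g^k_v(w,w)$ with positive $c_k(0)$, forcing each $g^k_v(w,w)=0$ when the left side vanishes.

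The paper handles (ii) by a different, more geometric route that avoids limits and rate estimates entirely. It observes that $\C$, being the cone of $L_1$, already has the convexity of Prop.~\ref{propiedades}(iv): the second fundamental form $\sigma^\xi$ of $\C$ is positive semi-definite with radical $\mathrm{span}(v)$ for any $\xi$ pointing into $A$. Combined with the identity $g_v(X,X)=-\tfrac12\,\sigma^\xi(X,X)\,\xi(L)$ (formula~\eqref{secondfundxi}) and the positivity $w(L)>0$ for $w$ causal, this immediately yields the signature and radical of $g_v|_{T_v\C}$. Your analytic approach is self-contained and sidesteps the second fundamental form; the paper's approach is shorter and makes transparent that condition (ii) is really a property of the \emph{hypersurface} $\C$ together with the sign of $dL$, not of the interior behaviour of $L$.
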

\begin{proof}
 To prove the smoothness of $L$, just notice that 
 $0$ is always a regular value of a Lorentz-Finsler metric 
   (Lemma \ref{l(i)}) 
and, then, 
 the products $\sqrt{L_i L_j}$ are smooth by applying Lemma \ref{lsmooth} to  $M'=TM\setminus\{\mathbf{0}\}$. 
 
 The result is a direct consequence of  Prop.  \ref{indexn-1}  (with $A^*=A$)  if its hypotheses $(i)$ and $(ii)$  hold.  Clearly,
the first one follows from the  expression of $h_v$ in \eqref{eq:angularm}
 ($h_v$ is negative semi-definite and $v$ spans its radical, as these properties hold for all $h_v^k$).  
 For $(ii)$, let $v\in\C$. For any $w$ causal,  $w(L)=(w(L_1)\sqrt{L}/\tF_1+\ldots+w(L_{n_0})\sqrt{L}/\tF_{n_0})>0$, since  each $w(L_i)>0$  (recall   Prop.  \ref{indexn-1})  and $\tF_i/\tF_j> 0$ (by  Lemma~\ref{lsmooth}).  
 Moreover, observing that the cone $\C$ of $L$ coincides with the cone for any $\tF_k$, then
part $(iv)$ of Prop. \ref{propiedades} is applicable to $\C$. Therefore,  $\sigma^w$ is negative semi-definite in $\ker (\omega_v)$ with radical spanned by $v$ and, by formula
 \eqref{secondfundxi} (with $\xi=w$), 
 analogous properties hold for $g_v$. 
\end{proof}
\subsection{General construction
of Finsler spacetimes}
 In \S \ref{newclass}, a simple  new class of Finsler spacetimes was introduced by using a Finsler metric and a one-form. Next, a more general procedure will allow us to  construct  every Lorentz-Finsler spacetime using  Riemannian and  Finsler metrics. 
\begin{thm}\label{t_carac}
Let $\C$ be a cone structure in a manifold $M$ and $A$ its  cone domain  with $\bar A$ its closure in $TM\setminus \mathbf 0$.   A smooth two-homogeneous function   $L:  \bar A   \rightarrow \R$,  $ L\geq 0$, satisfying $L^{-1}(0) = \C$ 
 is a Lorentz-Finsler metric  if and only if there exists a Riemannian metric $\g$  on $M$  and a  conic  Finsler metric
$\cF : A^* \rightarrow \R$ with $ \bar A  \subset A^*  $   such that 
\begin{equation}\label{ClassEx}
 L(v)= \g(v,v) - \cF(v)^2 ,  \,  \qquad \qquad \forall v\in \bar A,
\end{equation}
   and the following  properties  hold for any 
   $v\in  \bar A $:

(i)  whenever $v\in A$ (i.e., $L(v)>0$), 
\begin{equation}\label{fundtensEx}
\g(w,w)-\hat g_v(w,w)  -  \frac{1}{L(v)}\hat g_v(v,w)^2<0, \qquad \qquad  \forall
w\in \left< v\right>^{\perp_{\g}}\setminus\{0\}, \, \end{equation}
 where, as natural $\left< v\right>^{\perp_{\g}}:=\{w\in TM: \g(w,v)=0\}$,

(ii)  whenever $v\in \C$ (i.e., $L(v) =0$), 
\begin{equation}\label{fundtensEx2}
\g(w,w)-\hat g_v(w,w)<0 , \qquad \qquad \forall w\in \left< v\right>^{\perp_{\g}}\cap \left< v\right>^{\perp_{\hat g_v}} \setminus \{0\} 
\end{equation}
 and the indicatrices of $\g$ and $\hat F$ intersect transversely at $v$,  namely,  $\left< v\right>^{\perp_{\g}}\not= \left< v\right>^{\perp_{\hat g_v}} $. 
\end{thm}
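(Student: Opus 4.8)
The plan is to exploit the linearity of the fundamental-tensor operator: if $L=\g(v,v)-\cF(v)^2$, then, since the fundamental tensor of the quadratic form $v\mapsto\g(v,v)$ is $\g$ itself and that of $\cF^2$ is $\hat g_v$, the fundamental tensor of $L$ is simply
$$g_v=\g-\hat g_v.$$
Everything reduces to matching the index condition for $g_v$ against the positive-definiteness of $\hat g_v$, and I route both implications through Prop. \ref{indexn-1}, whose hypotheses $(i)$--$(ii)$ are designed for exactly this situation. Throughout I use $g_v(v,v)=L(v)$ and $dL_v(w)=2g_v(v,w)$ (Lemma \ref{l(i)}), together with the fact that $\g(v,v)>0$ forces $v\notin\langle v\rangle^{\perp_{\g}}$, so $T_{\pi(v)}M=\langle v\rangle\oplus\langle v\rangle^{\perp_{\g}}$.

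\emph{Sufficiency.} Assume $L=\g(v,v)-\cF(v)^2$ with $(i)$--$(ii)$. On $\langle v\rangle^{\perp_{\g}}$ one has $g_v(v,w)=-\hat g_v(v,w)$, so the left-hand side of \eqref{fundtensEx} equals the angular metric $h_v(w,w)=g_v(w,w)-L(v)^{-1}g_v(v,w)^2$. Since $h_v(v,\cdot)\equiv 0$, so that $\langle v\rangle$ lies in its radical, \eqref{fundtensEx} says precisely that $h_v$ is negative semidefinite with radical $\langle v\rangle$, which is hypothesis $(i)$ of Prop. \ref{indexn-1}. For $v\in\C$ the transversality $\langle v\rangle^{\perp_{\g}}\neq\langle v\rangle^{\perp_{\hat g_v}}$ makes $\omega_v=g_v(v,\cdot)=\g(v,\cdot)-\hat g_v(v,\cdot)$ non-trivial, since $\omega_v=0$ would force $\langle v\rangle^{\perp_{\g}}=\langle v\rangle^{\perp_{\hat g_v}}$. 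Setting $W_0:=\langle v\rangle^{\perp_{\g}}\cap\langle v\rangle^{\perp_{\hat g_v}}$ (of codimension $2$ by transversality), one checks that $\ker\omega_v=\langle v\rangle\oplus W_0$ and that, on $W_0$, $g_v=\g-\hat g_v$ is negative definite by \eqref{fundtensEx2}; as $\omega_v$ vanishes on $\ker\omega_v$, this is hypothesis $(ii)$ of Prop. \ref{indexn-1}. Since $\C$ admits a timelike vector field (Lemma \ref{l_2.1}), the converse part of Prop. \ref{indexn-1} applies and $L$ is Lorentz-Finsler.

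\emph{Necessity.} Given Lorentz-Finsler $L$ with fundamental tensor $g_v$, I seek a Riemannian metric $\g$ with $\g-g_v$ positive definite for every $v\in\bar A$, and then set $\cF:=\sqrt{\g(\cdot,\cdot)-L}$. Indeed, by linearity the fundamental tensor of $\g(\cdot,\cdot)-L$ is $\g-g_v=\hat g_v$, so positive-definiteness turns $\cF$ into a conic Finsler metric on a conic neighbourhood $A^*$ of $\bar A$ (extend $L$ by Rem. \ref{r3.6}(2), use openness of positive-definiteness, and note $\cF^2=(\g-g_v)(v,v)>0$); by construction $L=\g(\cdot,\cdot)-\cF^2$. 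The existence of a dominating $\g$ is the crux. Fixing an auxiliary Riemannian metric $h$, at each $p$ the set of $h$-unit vectors in $\bar A_p$ is compact (Prop. \ref{p_2.3}), so the continuous family $\{g_v\}$ has $h$-eigenvalues bounded above by some $M_p$; hence $c\,h-g_v$ is positive definite for $c>M_p$, and, by $0$-homogeneity of $g_v$, for every $v\in\bar A_p$. Because the condition ``$\g-g_v$ positive definite for all $v$'' is convex in $\g$, the local choices $c\,h$ glue via a partition of unity into a global Riemannian $\g$ dominating $g_v$ on all of $\bar A$.

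Finally, with these $\g,\cF$ the relation $g_v=\g-\hat g_v$ holds, and $(i)$--$(ii)$ are forced by $L$ being Lorentz-Finsler. Prop. \ref{indexn-1}$(i)$ gives that $h_v$ is negative semidefinite with radical $\langle v\rangle$; restricted to $\langle v\rangle^{\perp_{\g}}\setminus\{0\}$, a subspace meeting $\langle v\rangle$ only at $0$, this yields the strict inequality \eqref{fundtensEx}. The non-triviality of $\omega_v$ in Prop. \ref{indexn-1}$(ii)$ gives the transversality: if $\langle v\rangle^{\perp_{\g}}=\langle v\rangle^{\perp_{\hat g_v}}$ then $\hat g_v(v,\cdot)=\lambda\,\g(v,\cdot)$, and evaluating at $v$ with $L(v)=0$ (so $\hat g_v(v,v)=\cF(v)^2=\g(v,v)$) forces $\lambda=1$ and $\omega_v=0$, a contradiction; finally the negative semidefiniteness of $g_v$ on $\ker\omega_v\supset W_0$ gives \eqref{fundtensEx2}. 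The main obstacle is the construction of the dominating metric $\g$; the two reductions to Prop. \ref{indexn-1} are then routine linear algebra on the splitting $T_{\pi(v)}M=\langle v\rangle\oplus\langle v\rangle^{\perp_{\g}}$.
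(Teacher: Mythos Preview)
Your proof is correct and follows essentially the same route as the paper: both directions are reduced to Prop.~\ref{indexn-1} via the identity $g_v=\g-\hat g_v$, and the dominating Riemannian metric $\g$ is produced by a compactness-plus-rescaling argument (the paper phrases this tersely as ``up to a conformal re-scaling in the choice of $\g$'', while you spell out the partition-of-unity gluing). The only cosmetic difference is that the paper writes out the full angular-metric formula before restricting to $\langle v\rangle^{\perp_{\g}}$, whereas you restrict first; and in your last line you should say ``negative semidefinite with radical $\langle v\rangle$'' rather than just ``negative semidefiniteness'' to justify the strict inequality in \eqref{fundtensEx2}, but you have already invoked Prop.~\ref{indexn-1}(ii) which carries that radical information.
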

\begin{proof}
 Assume first that $L$ is a Lorentz-Finsler metric.  Then if $\g$ is a Riemannian metric, we can define an auxiliary pseudo-Finsler metric  $\hat{L}$  given by  $\hat{L}(v)=\g(v,v)-L(v)$  whose fundamental tensor satisfies
\[\hat{g}_v(u,w)=  \g(u,w) - g_v(u,w),  \]
being  $g$ the fundamental tensor of $L$.  At each $p\in M$,  the set of directions in $\bar A \cap T_pM$ is compact; so, up to a conformal re-scaling in the choice of $\g$, 
we can assume that $\hat g_v$ is positive definite at all $v\in  \bar A $, obtaining a conic  Finsler metric $\cF=\sqrt{\hat{L}}$  defined in some $A^* \supset  \bar A  $ (where $L$ is also extendible). 

 So, it is enough to check that a pseudo-Finsler metric as in \eqref{ClassEx}  is non-degenerate with index $n-1$  (that is, the conditions {\em (i)} and {\em (ii)} in Prop.~\ref{indexn-1} hold)  if and only if 
the conditions $(i)$ and $(ii)$  above hold.  Recall that the angular metric $h$ of $L$ on $A$ is determined by 
\[
 h_v(w,w)= \g (w,w)-\hat g_v (w,w) -\frac{1}{L(v)}{\left(\g (v,w)-\hat g_v (v,w)\right)^2}.
\]
 Now,  $h_v$ is  negative  semi-definite with radical spanned by $v$ (i.e. {\em (i) } in Prop. \ref{indexn-1} holds) 
if and only if it is negative in a transverse hyperplane to $v$.  Choosing   such a  hyperplane as $\left< v\right>^{\perp_{\g}}$, 
 this is equivalent to
 \eqref{fundtensEx}. 
About {\em (ii)},  the transversality of the indicatrices at $v$ is equivalent to saying that 
$\omega_v :=g_v(v,\cdot)= g_R(v,\cdot)-\hat g_v(v,\cdot)\not\equiv 0$ (indeed, $\omega_v(v)=0$; so, $\omega_v\equiv 0$ is equivalent to $\left< v\right>^{\perp_{\g}}= \left< v\right>^{\perp_{\hat g_v}}$). In this case,
$\left< v\right>^{\perp_{\g}}\cap \left< v\right>^{\perp_{\hat g_v}} $ has dimension $n-2$ and it is contained in $ \left< v\right>^{\perp_{g_v}}=\ker \omega_v$.  Therefore,  
{\em (ii) } above becomes  equivalent to $(ii)$ in  Prop.  \ref{indexn-1}.
\end{proof}

\begin{rem}\label{r_fibrado} This theorem can be used to characterize not only  the Lorentz-Finsler metrics on a manifold $M$ but also on a vector bundle. In particular,  consider the bundle $\pi^*_M(S) \rightarrow S$ used for the characterization of stationary spacetimes in Section \ref{ex_stationary}. Any Lorentz-Finsler metric $L^S$ in this bundle can also be written as a difference type $g_R-\hat F^2$. However, in the stationary setting, we were interested in the case that the Killing vector field $K$ (which could be  naturally identified with $\partial_t$ on $\R\times S$)  was timelike, that is,  $L^S(K)>0$. This condition can be ensured just by imposing that the direction of $K$ lies in the cone domain $A$ determined by the intersection of the indicatrices of $g_R$ and $\cF$.
\end{rem}
In spite of the generality of
Th. \ref{t_carac}, its application to construct Lorentz-Finsler metrics is not so straightforward as in
Th. \ref{t_examp}. Indeed, one has to check not only the conditions $(i)$ and $(ii)$ but also that  the ``appropriate'' cone domain $A$ has been chosen, as the following example shows.

\begin{exe}
Choose $g_R=2dx^2+2dy^2+dz^2$ and $\cF=\sqrt{dx^2+dy^2+2dz^2}$ in $\R^3$. Then $g_R-\cF^2=dx^2+dy^2-dz^2$	does not satisfy \eqref{fundtensEx} and \eqref{fundtensEx2} in any point of the region $A=\{v\in \R^3:g_R(v,v)-\cF(v)^2>0\}$. 
	
%
\end{exe}

\section{Lorentz-Finsler metrics associated with a cone structure}\label{s5}

Next our aim is to prove a general smoothing procedure for Lorentz-Finsler metrics which, in particular, will show that any cone structure $\C$ can be regarded as the cone structure associated with a (smooth) Lorentz-Finsler metric. 

\subsection{Non-smooth Lorentz-Finsler  $L$  associated with a cone triple}\label{s51} Recall that any cone structure $\C$ 
was determined by some cone triple $(\Omega, T, F)$ (Th. \ref{p_transversality}), which also yielded the decomposition \eqref{e_OmegaDecomposition} of $TM$. A first result of compatibility with Lorentz-Finsler metrics is the following.

\begin{prop}\label{l_G} For any cone triple $(\Omega, T, F)$ of a cone structure $\C$, the continuous two-homogeneous function $G: TM\rightarrow \R$, 
\begin{equation}\label{e_contLF}
G(tT_p+w_p)= t^2 - F(w_p)^2, \qquad \forall t\in\R, \; \forall w_p \in \ker (\Omega _p), \, \forall p \in M,
\end{equation}  is smooth everywhere but on\footnote{ From the proof and \cite[Th. 4.1]{Warner65}, it follows that $G$ will be smooth on span$(T)$  if and only if $F$ comes from a Riemannian metric.} span$(T)$. Moreover, whenever it is smooth,  its fundamental tensor $g$ (computed as in \eqref{fundtens}) is non-degenerate with index $n-1$. 
Such a $G$ will be called the {\em continuous Lorentz-Finsler metric associated with} $(\Omega, T, F)$.
\end{prop}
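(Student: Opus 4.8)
The plan is to exploit the splitting \eqref{e_OmegaDecomposition} in order to write $G$ explicitly as $G(v)=\Omega(v)^2-F(\pi(v))^2$, where both $\Omega$ and $\pi$ are smooth and fiberwise linear and $F$ is the Finsler norm on $\ker(\Omega)$. Positive two-homogeneity is then immediate from the linearity of $\Omega,\pi$ and the one-homogeneity of $F$: for $\lambda>0$ one gets $G(\lambda v)=\lambda^2\Omega(v)^2-F(\lambda\pi(v))^2=\lambda^2 G(v)$. Continuity follows because $\Omega^2$ is smooth and $F\circ\pi$ is continuous (a Minkowski norm extends continuously to the origin with value $0$). The key observation for what follows is that $\mathrm{span}(T)=\{v\in TM:\pi(v)=0\}$, since $\Omega(v)T=v$ forces $\pi(v)=0$ and conversely $\pi$ restricts to the identity on $\ker(\Omega)$ while $\pi(T)=0$.

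First I would settle smoothness. On $TM\setminus\mathrm{span}(T)$ one has $\pi(v)\neq 0$, and there $F$ (hence $F^2$) is smooth away from the zero section of $\ker(\Omega)$; composing with the smooth map $\pi$ shows that $G=\Omega^2-F^2\circ\pi$ is smooth off $\mathrm{span}(T)$. On $\mathrm{span}(T)$ the smoothness of $G$ reduces to the smoothness of $w\mapsto F(w)^2$ at $w=0$ inside each fiber $\ker(\Omega_p)$, which by \cite[Th.~4.1]{Warner65} holds if and only if $F$ comes from a Riemannian metric; this is precisely the content of the footnote and is not needed for the positive assertions.

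It remains to compute the fundamental tensor at a point $v$ with $\pi(v)\neq 0$ and read off its signature. Since $\Omega$ and $\pi$ are fiberwise linear, $\Omega(v+ru+sw)=\Omega(v)+r\Omega(u)+s\Omega(w)$ and $\pi(v+ru+sw)=\pi(v)+r\pi(u)+s\pi(w)$; inserting this into $G$ and applying \eqref{fundtens} term by term yields $g_v(u,w)=\Omega(u)\Omega(w)-g^F_{\pi(v)}(\pi(u),\pi(w))$, where $g^F$ denotes the (positive definite) fundamental tensor of $F$ on $\ker(\Omega)$ and the second summand is just the second mixed derivative of $F^2\circ\pi$. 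I would then evaluate $g_v$ in an adapted basis $\{T_p,e_1,\dots,e_{n-1}\}$ of $T_pM$, with $e_1,\dots,e_{n-1}$ a basis of $\ker(\Omega_p)$. Using $\Omega(T_p)=1$, $\Omega(e_i)=0$, $\pi(T_p)=0$ and $\pi(e_i)=e_i$, the cross terms vanish and the matrix of $g_v$ becomes block diagonal: it equals $1$ on $\mathrm{span}(T_p)$ and $-g^F_{\pi(v)}$ on $\ker(\Omega_p)$. Hence $g_v$ is non-degenerate, with $\mathrm{span}(T_p)$ positive definite and $\ker(\Omega_p)$ negative definite, so that its index is $n-1$, as claimed.

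The computation is essentially routine once the splitting is used; the only points requiring care are recognizing, via the linearity of $\pi$, that the $F^2$-term contributes exactly $g^F_{\pi(v)}$ on the projected arguments, and checking that the $T$--$\ker(\Omega)$ cross terms drop out (which is forced by $\Omega|_{\ker(\Omega)}=0$ together with $\pi(T)=0$). The mildly delicate ingredient is the characterization of smoothness along $\mathrm{span}(T)$ through Warner's theorem, but that only affects the exceptional locus and plays no role in the index computation.
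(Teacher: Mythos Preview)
Your proof is correct and follows essentially the same approach as the paper: both exploit the splitting \eqref{e_OmegaDecomposition} to obtain the fundamental tensor formula $g_v=\Omega\otimes\Omega-\pi^*g^F_{\pi(v)}$ (the paper writes this as $g=\Omega^2-\pi^*\hat g$) and read off the index from the resulting block-diagonal structure in an adapted frame. Your version is more explicit in verifying the cross terms vanish and in justifying the smoothness claim via $\mathrm{span}(T)=\pi^{-1}(\mathbf{0})$, but the argument is the same.
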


\begin{proof}
The smoothness of $G$  follows directly by taking local  fibered coordinates on $TM$ using a reference frame $(T=X_1, X_2, \dots , X_n)$ where $\ker (\Omega)= $ span$\{X_2, \dots , X_n\}$ (to construct this, choose a coordinate frame 
$(T=\partial_1, \partial_2, \dots , \partial_n)$ and project on $\ker(\Omega)$ in the direction of $T$). Then, using \eqref{fundtens} the fundamental tensor $g$ of $G$ and $\hat{g}$  of $F$ are related by
$$
g=\Omega^2 - \pi^* \hat{g},
$$
where $\pi$ is the projection onto $\ker (\Omega)$  (as in \eqref{e_OmegaDecomposition}). From the last identity, it follows straightforwardly  that the index of $g$ is $n-1$ as required. 
\end{proof}

\begin{rem}  The  indicatrix associated with  the triple $(\Omega, T, F)$ is then:
\begin{equation}
\label{e_indicatrixCONT}
\Sigma := G^{-1} (1)\cap \Omega^{-1}((0,\infty)).
\end{equation}
Clearly, $\Sigma_p:= \Sigma \cap T_pM$ will be a convex hypersurface and it is smooth and strongly convex  with respect to the position vector  everywhere except in $T_p$. 
However, $G$ provides a second cone (and, thus,  another Lorentz-Finsler metric). Indeed,
$G(tT_p+w_p)=G((-t)T_p+w_p)$, so, we will have  a ``reflected'' cone structure with indicatrix
$$
\Sigma^- =\{-t T_p+w_p: t T_p+w_p \in \Sigma\}.
$$
\end{rem}

  \smallskip

Next, our aim is to smooth $G$ around $T$. With this purpose,  $\Sigma$ will be smoothed by constructing a new hypersurface $\tilde\Sigma$ s.t.: 

(i) it is strongly convex, 

(ii) pointwise  $\tilde\Sigma_p= \Sigma_p$ outside a relatively compact neighborhood of  $T_p$. 

\smallskip

Once constructed $\tilde \Sigma$ and the reflected one $\tilde \Sigma^-$,  the required smooth Lorentz-Finsler $\tilde G$ will be determined by imposing:  

(a) pointwise  $\tilde G= G$ outside the radial directions perturbed of $\Sigma$, and 

(b) $\tilde \Sigma$ and $\tilde \Sigma^-$ are, resp.,  the future and past indicatrices of $\tilde G$.

\begin{rem}\label{e_5.3}
(1) We will focus in this concrete problem of smoothness, especially adapted to cone structures. However, the smoothing procedure is very general and could be applied to  any other    continuous  Lorentz-Finsler metric whose indicatrix is  convex but non-smooth  in a (pointwise) compact subset.  In fact, it can be applied to any Finsler spacetime as defined in \cite{AaJa16}.  

(2) A different problem would happen for non-smooth cone structures. However, its description by means of a triple $(\Omega,T,F)$ would reduce this question  to smoothen (some of) these elements.
\end{rem}


\subsection{The smoothing procedure of indicatrices} A smooth function $f$ defined on $\R^m$ will be called strongly convex when its Hessian, Hess$(f)$, is definite positive; thus, its graph will be a strongly convex hypersurface.  In the following, if $D$ denotes a disk of radius $r$, we will denote by $D/2$ and $D/4$ the disks with the same center but radius $r/2$ and $r/4$, respectively. 

\begin{lemma}\label{l_PRINC} (A  strongly convex approximation  for  a convex function). Let $t_0: \R^{n-1}\rightarrow \R$ be a continuous convex function which is smooth and  strongly convex  everywhere but in $0$  and such that there exists a neighborhood of $0$ where the $\hess( t_0)$ is lower bounded by a positive constant except in zero.  Let $D$ be a disk (a closed ball centred at the origin of radius $r>0$). Then, for any $\epsilon>0$ there exists a strongly convex function $\tilde t_0$ defined on all $\R^{n-1}$ such that $t_0=\tilde t_0$ away from $D/2$ and $|\tilde t_0-t_0|<\epsilon$ everywhere.
\end{lemma}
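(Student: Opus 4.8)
The plan is to leave $t_0$ unchanged outside $D/2$ and to replace it near the singular point $0$ by a mollification, gluing the two pieces with a fixed cutoff. First I would extract a \emph{uniform} Hessian lower bound on a punctured disk. Since $t_0$ is strongly convex (so $\hess(t_0)$ is positive definite) at every point of $\overline{D}\setminus\{0\}$, compactness of the annulus $\{\eta\le |x|\le r\}$ gives a positive bound there, while the standing hypothesis provides a positive lower bound on a punctured neighbourhood $B_\eta(0)\setminus\{0\}$. Combining the two yields a constant $c>0$ with $\hess(t_0)\ge c\,\mathrm{Id}$ on all of $\overline{D}\setminus\{0\}$.

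Next I would smooth near $0$. Let $\rho_\delta$ be a standard mollifier supported in the ball of radius $\delta$, and set $f_\delta:=t_0*\rho_\delta$, which is smooth on $B_{r-\delta}\supset D/2$ once $\delta<r/2$. The key point is that $f_\delta$ is not merely convex but \emph{strongly} convex there, with the same bound $c$. To see this, write $\phi:=t_0-\tfrac{c}{2}|x|^2$: the distributional Hessian of $t_0$ is a positive semidefinite matrix-valued measure whose density equals the classical $\hess(t_0)\ge c\,\mathrm{Id}$ on $D\setminus\{0\}$ (where $t_0$ is smooth) and whose only possible singular part sits at $\{0\}$; subtracting the absolutely continuous term $c\,\mathrm{Id}\,dx$ therefore leaves the distributional Hessian of $\phi$ positive semidefinite on $D$, so $\phi$ is convex there. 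Convolution preserves convexity, and $(\tfrac{c}{2}|x|^2)*\rho_\delta$ differs from $\tfrac{c}{2}|x|^2$ only by a constant, so $\hess(f_\delta)=\hess(\phi*\rho_\delta)+c\,\mathrm{Id}\ge c\,\mathrm{Id}$ on $D/2$. Moreover, as $\delta\to 0$ one has $f_\delta\to t_0$ uniformly on $D/2$ (continuity of $t_0$) and $f_\delta\to t_0$ in $C^2$ on the annulus $D/2\setminus (D/4)^\circ$, where $t_0$ is smooth.

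Finally I would glue. Fix once and for all a smooth cutoff $\chi$ with $\chi\equiv 1$ on $D/4$ and $\mathrm{supp}\,\chi\subset (D/2)^\circ$, and set $\tilde t_0:=\chi f_\delta+(1-\chi)t_0=t_0+\chi(f_\delta-t_0)$. This is smooth, since it equals $f_\delta$ on $D/4$, equals $t_0$ off $D/2$ (away from the singular point), and is a smooth combination of smooth functions on the annulus; it coincides with $t_0$ away from $D/2$, and $|\tilde t_0-t_0|\le |f_\delta-t_0|<\epsilon$ for $\delta$ small by the uniform convergence. Strong convexity is immediate on $D/4$ (there $\tilde t_0=f_\delta$) and off $D/2$ (there $\tilde t_0=t_0$). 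The delicate region is the annulus, and this is the main obstacle: a convex combination of two strongly convex functions need not be convex, because $\hess\!\big(\chi(f_\delta-t_0)\big)$ introduces terms carrying $\nabla\chi$ and $\hess\chi$. I would control these through the estimate $\hess(\tilde t_0)\ge c\,\mathrm{Id}-C_\chi\,\|f_\delta-t_0\|_{C^2(\text{annulus})}\,\mathrm{Id}$, where $C_\chi$ depends only on $\|\chi\|_{C^2}$; since the right-hand correction tends to $0$, choosing $\delta$ small enough forces $\hess(\tilde t_0)\ge \tfrac{c}{2}\,\mathrm{Id}>0$ on the annulus as well. Hence $\tilde t_0$ is strongly convex on all of $\R^{n-1}$, smooth, $\epsilon$-close to $t_0$, and unchanged outside $D/2$, which is exactly the claim. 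The only genuinely subtle points are that the uniform Hessian bound survives mollification across the singular point (handled by subtracting the quadratic) and the $C^2$-balance on the gluing annulus.
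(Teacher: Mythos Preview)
Your argument is correct and follows the same overall scheme as the paper's main proof: build a smooth strongly convex approximant near $0$, then glue it to $t_0$ with a cutoff supported in $(D/2)^\circ$, controlling the Hessian on the transition annulus $D/2\setminus(D/4)^\circ$ by making the approximant $C^2$-close to $t_0$ there. The difference lies in how the approximant is produced. The paper simply invokes existing approximation theorems for convex functions (results of Ghomi and Azagra) to obtain a strongly convex $\hat t_0$ with the needed $C^2$-closeness on the annulus, whereas you construct it explicitly as a mollification $f_\delta=t_0*\rho_\delta$, and use the neat device of subtracting the quadratic $\tfrac{c}{2}|x|^2$ to show that the uniform Hessian bound $c\,\mathrm{Id}$ survives convolution through the singular point. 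Your route is more self-contained and elementary, at the cost of a short detour through the distributional Hessian of a convex function; the paper's route is shorter on the page but outsources the work. The paper also records, in a remark, a second and genuinely different proof due to Azagra based on the \emph{smooth maximum} of $t_0$ with an explicit quadratic, which avoids the annulus estimate altogether.
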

\begin{proof} Let $\{\mu_0 , \mu_1\}$ be a partition of  unity subordinated to the covering 
$\{(D/2), \R^{n-1}\setminus (D/4)\}$. 
Let $\hat t_0$ be a strongly convex function such that  $|\hat t_0-t_0|<\epsilon$ on $D$ and, even more,
\begin{equation}\label{e_BOUNDS}
 |\hat t_0-t_0|, 
|\hbox{grad} (\hat t_0-t_0)|, 
|\hess (\hat t_0-t_0)| <\hat\epsilon , 
\end{equation}
 ($|\cdot |$ denotes the usual norm of the corresponding element, regarding it as included in $\R,\R^{n-1}$ or $\R^{(n-1)^2}$, resp.) on the closure of $(D/2)\setminus(D/4)$  for some $\hat\epsilon>0$ to be specified below 
(such bounds  can be obtained for arbitrarily small $\hat\epsilon>0$  by the standard theory of convex functions)\footnote{ On the one hand, the strong convexity of $t_0$ and the compactness of
the boundary of $D/2$ allows one to find a function $f$ as in   \cite[Th. 2.1]{Gh}, which is convex and $(\hat\epsilon/2)$-close to $t_0$  everywhere, agrees with $t_0$ outside $D/2$ and has first and second derivatives $(\hat\epsilon/2)$-close to $t_0$ on   
 $(D/2)\setminus(D/4)$ (for the latter,  recall \cite[formula (2.3)]{Gh}). On the other, the lower boundedness of $\hess(t_0)$ ensures that $t_0$ is strongly convex in the sense of \cite[Def. 1]{Azagra} and allows  one  to find a strongly convex function $g$ which is $(\hat\epsilon/2)$-close to $t_0$ 
 \cite[Cor. 1]{Azagra}. So, for small $\eta>0$, the linear combination $\eta g + (1-\eta)f$  makes the job.}.
 Let, 
$$
\tilde t_0= t_0 + \mu_0(\hat t_0-t_0),
$$
that is clearly smooth and equal to $t_0$ away from $D/2$. This function 
will become strongly convex, as required, just by making $\hat \epsilon$ smooth enough so that the Hessian of the last term is smaller on $D\setminus (D/4)$ than the Hess $t_0$. Concretely,
\begin{multline}
\label{e_LAST}
\hess \tilde t_0= \hess t_0 + (\tilde t_0-t_0) \hess \mu_0 + \hbox{grad} (\tilde t_0-t_0) \hbox{grad} \mu_0\\+ \hbox{grad} \mu_0\hbox{grad} (\tilde t_0-t_0) + \mu_0 \hess (\tilde t_0-t_0).
\end{multline}
As there are  $\nu, C>0$ such that 
$\hess t_0> \nu$ and $\mu_0, | \hbox{grad} \mu_0 | , | \hess \mu_0| < C$ in the closure of $(D/2)\setminus(D/4)$, the choice $\hat\epsilon<\nu/(4C)$ suffices.
\end{proof}

\begin{rem}\label{r_LAST}
  The previous proof  can be extended  directly  to other cases discussed in Appendix~\ref{s_a1}. However, the next argument by D. Azagra provides a much more direct proof. Let $D$ be the unit disk with no loss of generality and $\xi$ be a subgradient of $t_0$ at $0$. As $\textrm{Hess}(t_0)$ is bounded from below by a constant $\delta>0$, 
$$
t_0(x)\geq t_0(0)+\langle \xi, x\rangle +\frac{\delta}{2}\|x\|^2 \qquad \forall x\in\R^n.
$$
So, outside $D/2$,  where $D$ can be regarded as unit disk with no loss of generality,  we have
$
t_0(x)\geq 
t_0(0)+\langle \xi, x\rangle +\frac{\delta}{4}\|x\|^2 +\frac{\delta}{16},
$
while in a sufficiently small neighborhood of $0$, we have
$
t_0(0)+\langle \xi, x\rangle +\frac{\delta}{4}\|x\|^2 + \frac{\delta}{32}> t_0(x) +\frac{\delta}{64}.
$
Let us define
$$
\widetilde{\varepsilon}=\min\left\{\frac{\varepsilon}{2}, \frac{\delta}{64}\right\},
\quad  
\widetilde{t}_0(x)=M_{\widetilde{\varepsilon}}\left( t_0(x), \,\, t_0(0)+\langle \xi, x\rangle +\frac{\delta}{4}\|x\|^2 + \frac{\delta}{32}\right),
$$
where $M_{\widetilde{\varepsilon}}$ is the {\em smooth maximum} of \cite[Prop. 2]{Azagra}. Then we have $\widetilde{t}_0=t_0$ off of $D/2$,  $|\widetilde{t}_0 -t_0|\leq \widetilde{\varepsilon}<\varepsilon$ everywhere, and $\widetilde{t}_0$ is a strongly convex function  (indeed,  so it is   at $x=0$  because  $M_{\tilde \varepsilon}$ is equal to the second function  around 0, \cite[part (3) of Prop. 2]{Azagra}, and away from zero  by  \cite[part (9) of Prop. 2]{Azagra}). 

\end{rem}

Next, this lemma  will be applied pointwise to $\Sigma$  in \eqref{e_indicatrixCONT}, regarding each $\Sigma_p$ as  the graph of a convex function  on  $\ker$ $\Omega_p$. 

\begin{thm} \label{t_principal} Let  $(\Omega, T, F)$ be a cone triple  
on $M$ with cone $\C$  and let $G$ be its associated continuous Lorentz-Finsler metric \eqref{e_contLF} with indicatrix $\Sigma$. 

Let  $\mathcal{U}$ be any  neighborhood  of  the section $T$ regarded as a submanifold of 
 $TM$,  which will be assumed (without loss of generality) with the closure of  
$\mathcal{U}\cap T_p M$ compact and included in the cone domain $A$, for all $p\in M$. 

Then, there exists a smooth  hypersurface $\tilde \Sigma \subset TM$ 
satisfying: 

(a) $\tilde \Sigma = \Sigma$ in $TM$ away from 
$\mathcal{U}$. 

(b) Each $\tilde \Sigma_p=\tilde \Sigma \cap T_pM$ is transverse to all the radial  directions in $A$, and $\tilde\Sigma_p$ is strongly convex  (with respect to the position vector) everywhere.
\end{thm}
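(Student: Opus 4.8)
The plan is to apply the fibrewise smoothing of Lemma~\ref{l_PRINC} to each indicatrix $\Sigma_p$, after describing $\Sigma_p$ as a graph, and then to arrange the construction so that it depends smoothly on the base point $p$, which is what upgrades a fibrewise family into a smooth hypersurface of $TM$. First I would use the splitting \eqref{e_OmegaDecomposition}, $v=\Omega(v)T+\pi(v)$, to regard each fibre $\ker \Omega_p$ as a copy of $\R^{n-1}$ and to write the indicatrix \eqref{e_indicatrixCONT} as the graph
$$\Sigma_p=\{\, w+t_0(w)\,T_p : w\in \ker \Omega_p\,\}, \qquad t_0(w)=\sqrt{1+F(w)^2}$$
over $\ker \Omega_p$, recalling \eqref{e_contLF}; note that $w=0$ corresponds exactly to the unique non-smooth direction $T_p$, where $t_0$ attains its strict global minimum $t_0(0)=1$.

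The next step is to verify the hypotheses of Lemma~\ref{l_PRINC} for $t_0$. It is convex and, away from $0$, smooth and strongly convex, since this is precisely the strong convexity of $\Sigma_p$ with respect to the position vector away from $T_p$ recorded after Prop.~\ref{l_G}. For the lower bound on $\hess t_0$ near $0$, writing $u=F^2$ one computes $\hess t_0=\frac{1}{2\sqrt{1+u}}\hess u-\frac{1}{4(1+u)^{3/2}}\,du\otimes du$; as $w\to 0$ one has $du\to 0$ and $\hess u=2\hat g_w$ (with $\hat g$ the fundamental tensor of $F$), so $\hess t_0\to \hat g_{w}$, which is uniformly positive definite because $\hat g$ is $0$-homogeneous and the set of directions is compact. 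Hence $\hess t_0\geq \nu>0$ on a punctured neighbourhood of $0$, as required.

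The main obstacle is not the fibrewise smoothing but guaranteeing that the resulting family assembles into a \emph{smooth hypersurface of $TM$}, since Lemma~\ref{l_PRINC} only yields fibrewise regularity. To obtain joint smoothness in $(p,w)$ I would use the explicit construction of Rem.~\ref{r_LAST}. Since $0$ is the global minimum of $t_0$, the subgradient can be taken to be $\xi\equiv 0$, constant in $p$; fixing an auxiliary Riemannian metric to provide a norm $\|\cdot\|$ smooth in $p$, and a smooth lower bound $\delta(p)\in(0,\nu)$ for $\hess t_0$ near $T_p$, one sets
$$\tilde t_0(p,w)=M_{\tilde\epsilon}\!\left(t_0(w),\; 1+\tfrac{\delta}{4}\|w\|^2+ c\right),$$
with $M_{\tilde\epsilon}$ the smooth maximum of Rem.~\ref{r_LAST} and $c,\tilde\epsilon$ calibrated so that the barrier exceeds $t_0$ near $0$ and lies below $t_0$ off a disk $D/2$. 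Near $w=0$ this equals the jointly smooth quadratic barrier, and away from $0$ it is a smooth maximum of functions smooth in $(p,w)$; therefore $\tilde t_0$ is smooth in $(p,w)$ and $\tilde\Sigma:=\{\,w+\tilde t_0(p,w)T_p\,\}$ is a smooth hypersurface of $TM$. I would shrink the perturbation (via a smooth positive radius function $r(p)$) so that $D/2$ stays inside $\mathcal{U}$, which is possible because $\mathcal{U}$ is an open neighbourhood of the section $T$ with $\overline{\mathcal{U}\cap T_pM}$ compact.

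Finally I would check (a) and (b). Property (a) holds by construction, since $\tilde t_0=t_0$ off $D/2\subset \mathcal{U}$. For (b), strong convexity of each $\tilde\Sigma_p$ is immediate from the strong convexity of $\tilde t_0$. Transversality to the radial directions of $A$ does \emph{not} follow from strong convexity alone (a strongly convex graph over a hyperplane generally admits tangent radial lines), so here I would use the timelike character of the directions of $A$: writing such a direction as $T_p+u$ with $F(u)<1$, the function $\phi(\lambda)=\tilde t_0(\lambda u)-\lambda$ is strongly convex with $\phi(0)=\tilde t_0(0)>0$, and since $\frac{d}{d\lambda}\tilde t_0(\lambda u)$ increases to $F(u)<1$ one gets $\phi'<F(u)-1<0$ throughout; hence $\phi$ decreases strictly to $-\infty$ and has a unique, transverse zero, giving the required transversality.
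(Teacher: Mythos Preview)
Your overall strategy---writing $\Sigma_p$ as the graph of $t_0(w)=\sqrt{1+F(w)^2}$ over $\ker\Omega_p$, checking the hypotheses of Lemma~\ref{l_PRINC}, and then smoothing---is the paper's, and your Hessian estimate near $0$ is correct. The genuine gap is in your use of Rem.~\ref{r_LAST}: the quadratic barrier $q(w)=1+\tfrac{\delta}{4}\|w\|^2+c$ \emph{cannot} be calibrated so that it lies below $t_0$ off a disk $D/2$. Indeed $t_0(w)=\sqrt{1+F(w)^2}$ grows only linearly (its Hessian tends to~$0$ along radial rays, as the paper's own computation of $\hess_{\lambda v}(t_x)$ shows), whereas $q$ grows quadratically; hence for large $\|w\|$ one has $q(w)>t_0(w)+\tilde\epsilon$ and the smooth maximum $M_{\tilde\epsilon}(t_0,q)$ equals $q$ there, not $t_0$. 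Your $\tilde\Sigma$ then differs from $\Sigma$ far from $T_p$, violating~(a). (Rem.~\ref{r_LAST} as written really uses $\hess t_0\geq\delta$ on all of $\R^{n-1}$, which $\sqrt{1+F^2}$ does not satisfy.) This also undermines your transversality argument, since $\tfrac{d}{d\lambda}\tilde t_0(\lambda u)$ no longer tends to $F(u)$.

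The paper circumvents this in two ways. First, it uses the bump-function version of Lemma~\ref{l_PRINC}, $\tilde t_0=t_0+\mu_0(\hat t_0-t_0)$ with $\mathrm{supp}\,\mu_0\subset D/2$, so that $\tilde t_0=t_0$ outside $D/2$ is automatic and no asymptotic comparison is needed. Second, for joint smoothness in $(p,w)$ it does not attempt a global construction with smoothly $p$-dependent parameters $\delta(p),c(p),\tilde\epsilon(p)$: instead it works in a coordinate chart around each $p$, fixes one approximant $\hat t_0$ independent of the chart variable $x$, checks by continuity that the bounds \eqref{e_BOUNDS} and $\hess t_x>\nu$ persist on a possibly smaller chart, and then glues the resulting local graph functions by a partition of unity on the \emph{base} $M$, $\tilde\tau^*=\sum(\rho_i\circ\pi_M)\,\tilde\tau^{p_i}$; pointwise convex combinations preserve strong convexity, yielding~(b). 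You could repair your argument either by replacing the quadratic barrier with a strongly convex one of linear growth dominated by $F$ at infinity, or---more robustly---by following the paper's local-chart-plus-partition route.
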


\begin{proof}
Consider the function $\tau: \ker (\Omega)\rightarrow \R$ determined univocally by 
$$\tau(u_p)T_p+u_p \in \Sigma_p , \qquad \forall u_p\in \ker(\Omega), \quad \forall p\in M.$$
For each $p$,  let $\tau_p$ be its restriction to $\ker(\Omega_p)$,  and introduce local fibered coordinates for $\ker (\Omega)$
by taking a small open coordinate chart $(U,\phi)$ centered at $p$ and choosing a basis of $n-1$ vector fields that expand  $\ker(\Omega)$ on $U$. In such coordinates,  each function $\tau_q$, $q\in U$, is written as a function 
 $t_{x}:\R^{n-1}\rightarrow \R$ labelled with $x=\phi(q)$; in particular, $\tau_p=t_0$.
Varying $x\in \phi(U)$ we have then a function: 
 $$
t: \phi(U)\times \R^{n-1}\rightarrow \R, \qquad \qquad (x,y)\mapsto t_x(y),
$$   
which is smooth in $(x,y)$ away from $y\equiv 0$ because of the properties of smoothness and continuity of $\Sigma$ and the transversality of every $\Sigma_p$ to each line $\{u_p+ \lambda T_p: \lambda\in\R\}$, $u_p\in \ker(\Omega_p)$.  Moreover,  as $t_x(v)=\sqrt{1+F(v)^2}$,
\[\hess_v(t_x)(u,w)=\frac{1}{\sqrt{1+F(v)^2}}\left(g_v(u,w)-\frac{1}{1+F(v)^2}g_v(v,u)g_v(v,w)\right),\]
which is lower bounded in any bounded  neighborhood of zero (away from zero) because $\hess_{\lambda v}(t_x)(u,u)=\frac{1}{1+\lambda^2F(v)^2}g_v(u,u)$ if $u$ is $g_v$-orthogonal to $v$ and $\hess_{\lambda v}(t_x)(v,v)=\frac{F(v)^2}{1+\lambda^2 F(v)^2}$.

Clearly, $t_0$ lies under the hypotheses of  Lemma \ref{l_PRINC}, and we can take $\hat t_0$, $\mu_0$ and $\mu_1$ as in its proof for some small disk $D$ such that 
$T_q+u_q\in \mathcal{U}$ for all $u_q\in \ker (\Omega)$ with coordinates in 
$\phi(U)\times (D/2)$.
  Now, regard $\hat t_0$, $\mu_0$ and $\mu_1$ as functions on all $\phi(U)\times \R^{n-1}$ just making them independent of the variable $x\in \phi(U)$. Choosing a smaller $U$ if necessary, the continuity of $t$ and its derivatives ensure that the bounds \eqref{e_BOUNDS}   in the closure of $(D/2)\setminus(D/4)$ hold not only for $\hat t_0-t_0$ but also for  $\hat t_0-t_x$ for all $x\in \phi(U)$
and  some convenient $\hat \epsilon$. 
Concretely, $\hat \epsilon$ is chosen 
so that the bounds below \eqref{e_LAST} hold with $\hess t_x>\nu$ for all $x\in \phi(U)$. Therefore,
$$
\widetilde{t}(x,y):= t_x(y)+\mu_0(y)(\hat t_0(y)-t_x(y)) \qquad \qquad \forall (x,y)\in \phi(U) \times \R^{n-1}
$$
is smooth, strongly convex and it satisfies 
$
\widetilde{t}=t$ outside $ \phi(U) \times (D/2)$. 

Now, consider the function $\tilde \tau:\ker (\Omega ) \cap TU\rightarrow \R$ whose expression in coordinates is  $\tilde t$,  and define its graph as follows:
$$\hbox{Graph}(\tilde \tau)=\{\tilde \tau(u_p)T_p+u_p : u_p\in \ker(\Omega)\cap TU, \,  p\in U\}.$$
Clearly, $\hbox{Graph}(\tilde \tau)$ is a hypersurface which fulfills all the required properties for $\tilde \Sigma$ except that it is defined only on $TU$. In order to obtain an appropriate function $\tilde \tau^*$ on all $\ker (\Omega)$ function, consider for each $p\in M$ the constructed function $\tilde\tau\equiv \tilde \tau^p$ and neighborhood $U\equiv U^p$, 
and take a subordinated partition of  unity $\{\rho_i:$ supp$(\rho_i) \subset U^{p_i}, i\in\N\}$.  Then, the  pointwise linear combination of strongly  convex functions, 
$$
\tilde \tau^*: \hbox{ $\ker$ }\Omega \rightarrow \R, \qquad \qquad \tilde \tau^* = \sum_{i\in\N} (\rho_i\circ \pi_M) \cdot \tilde\tau^{p_i}
$$
(with $\pi_M: TM\rightarrow M$ the natural projection) yields  the required $\tilde \Sigma$. 
\end{proof}

\begin{rem} \label{r_principal}  (1)  A natural way to choose such a small neighborhood $\mathcal U$ of the section $T$ is  as follows.   Given any (continuous) function $\epsilon: M\rightarrow \R$ with $0<\epsilon<1$, one can take: 
$$\mathcal{U}=\{\lambda(p)  T_p + w_p: 1-\epsilon(p)<\lambda(p)<1+\epsilon(p) ,F(w_p)<\epsilon(p), 
p\in M\}.$$  

 (2) 
In particular, choose $\epsilon \equiv 1/2$. Once $\tilde \Sigma$ has been obtained, a smooth function $\lambda$ on $M$, $1/2<\lambda<3/2$, is obtained by imposing $\lambda(p)T_p \in \tilde \Sigma$ on all $M$. The triple 
$(\Omega/\lambda,\lambda T,  F/\lambda)$ 
is also associated with the cone $\C$.  However,  the continuous Lorentz-Finsler metric associated with this triple is different to both, $\Sigma$ and $\tilde \Sigma$, in general.

 (3) Trivially, the smoothing procedure can be carried out in a way independent of the Killing vector $K$ for a (continuous) standard   stationary spacetime, obtaining so new examples of smooth stationary spacetimes starting at non-smooth ones (for example, starting just at a product $(\R\times S, G\equiv -dt^2+F^2)$, where $F$ is a classical Finsler metric on $S$).
\end{rem}


 As a direct consequence, we obtain: 
\begin{cor}\label{t_PRINNCIPAL}  Any cone structure $\C$ is the cone structure of a (smooth) Lorentz-Finsler metric  $L$  defined on all $TM$. 
\end{cor}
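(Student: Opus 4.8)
The plan is to exhibit $\C$ as the zero-level cone of a globally smooth two-homogeneous function obtained by smoothing the continuous metric attached to a cone triple. First I would use Theorem \ref{p_transversality} (together with Lemma \ref{l_2.1}) to fix a cone triple $(\Omega,T,F)$ associated with $\C$, and form the continuous Lorentz-Finsler metric $G$ of Proposition \ref{l_G}, whose future indicatrix is the $\Sigma$ of \eqref{e_indicatrixCONT}. Recall that $G$ is smooth on $TM\setminus\mathbf 0$ away from the line field $\mathrm{span}(T)$, that wherever it is smooth its fundamental tensor has index $n-1$, and that $G$ vanishes exactly on $\C$ and on the reflected cone $\C^-$ (the past cone), being negative strictly between the two cones.

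Next I would smooth $G$ simultaneously around the two offending directions $T$ and $-T$. Applying Theorem \ref{t_principal} with a neighborhood $\mathcal U$ of the section $T$ whose fibers are relatively compact and contained in the cone domain $A$, I obtain a smooth, fiberwise strongly convex hypersurface $\tilde\Sigma$ agreeing with $\Sigma$ off $\mathcal U$; reflecting through the splitting \eqref{e_OmegaDecomposition}, i.e. setting $\tilde\Sigma^-=\{-tT_p+w_p: tT_p+w_p\in\tilde\Sigma\}$, provides the analogous smoothing near $-T$. I then define $L:TM\to\R$ as the two-homogeneous function prescribed by (a) $L=G$ off the radial directions meeting $\mathcal U\cup(-\mathcal U)$ and (b) $\tilde\Sigma$, $\tilde\Sigma^-$ being its future and past indicatrices: for $v\in A$ write uniquely $v=r\,u$ with $r>0$ and $u\in\tilde\Sigma_{\pi(v)}$ (possible since each $\tilde\Sigma_p$ is strongly convex and transverse to radial directions), set $L(v)=r^2$, impose the symmetric prescription on the past side, and put $L=G$ elsewhere.

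It then remains to verify that $L$ is a smooth Lorentz-Finsler metric with cone $\C$. Smoothness on $TM\setminus\mathbf 0$ is immediate: on the unperturbed region $L$ coincides with the smooth $G$, while on the two perturbed timelike wedges the radial map $(r,u)\mapsto r\,u$ on $(0,\infty)\times\tilde\Sigma$ is a diffeomorphism, exactly as in the proof of Theorem \ref{p_transversality}. For the signature, note that $\mathcal U$ lies inside $A$ and stays away from the cone, so near $\C$ (and near $\C^-$) one has $L=G$; hence there the fundamental tensor has index $n-1$ and $L$ extends smoothly as zero on $\C$ with nondegenerate fundamental tensor, inheriting the properly Lorentz-Minkowski behavior of $G$. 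On the smoothed interior, the strong convexity of $\tilde\Sigma$ with respect to the position vector gives, through \eqref{secfundform}, that $g_v$ is negative definite on $T_v\tilde\Sigma$; combined with the homogeneity identities $g_v(v,v)=L(v)>0$ and $T_v\tilde\Sigma=\ker dL_v=\{w:g_v(v,w)=0\}$ from Lemma \ref{l(i)}, the index is again $n-1$. Thus $L$ has index $n-1$ on all of $TM\setminus\mathbf 0$ and is positive precisely on $A$, so Proposition \ref{p_beem} (with $A^*=TM\setminus\mathbf 0$ and $X=T$) certifies that $L$ is a Lorentz-Finsler metric with cone domain $A$. Finally, since $\tilde\Sigma=\Sigma$ near $\C$, the boundary $L^{-1}(0)$ of $A$ is exactly $\C$, and Corollary \ref{c_3.6} identifies the cone structure of $L$ with the prescribed $\C$.

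The only genuine content — everything else being bookkeeping — is checking that the two-homogeneous extension $L$ stays smooth and keeps index $n-1$ across the seam where the perturbed interior meets the region $L=G$; this is precisely what is secured by having arranged $\tilde\Sigma=\Sigma$ outside $\mathcal U$, together with Lemma \ref{l(i)} and the second-fundamental-form identity \eqref{secfundform}.
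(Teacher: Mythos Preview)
Your proof is correct and follows essentially the same approach as the paper: choose a cone triple, form the continuous metric $G$ of Prop.~\ref{l_G}, smooth its indicatrix via Th.~\ref{t_principal}, reflect to handle the past side, and read off the resulting two-homogeneous $L$ exactly as in the paragraph preceding Rem.~\ref{e_5.3}. The paper's own proof is just a one-line pointer to that construction, whereas you have supplied the verification details (smoothness via the radial diffeomorphism, index~$n-1$ via \eqref{secfundform} and Lemma~\ref{l(i)}, and the appeal to Prop.~\ref{p_beem}) that the paper leaves implicit.
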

\begin{proof}
Consider any cone triple $(\Omega,T,F)$ associated with $\C$ and the corresponding continuous Lorentz-Finsler metric $G$ in Prop. \ref{l_G}. The required metric $F$ is just the metric $\tilde G$ obtained by smoothing $G$ (using Th. \ref{t_principal}) as explained  in the paragraph before  Rem. \ref{e_5.3}. 
\end{proof}
  A proof of the last  corollary  with a different approach can be found in \cite[Prop. 13]{Min17}. 
\begin{rem}\label{rPRINCIPAL} This result and Th. \ref{t_anisotropic} can be summarized as follows: {\em each cone $\C$ determines univocally a (non-empty) class of anisotropically conformal Lorentz-Finsler metrics}.
\end{rem}

\section{Cone geodesics and applications}\label{s6}

 As seen in  Subsection  \ref{s2_con_est_causality}, there is an obvious way to extend  the causality of relativistic spacetimes to any  cone   structure $\C$ and, thus, to any Lorentz-Finsler metric.
However, the fact that  any such a $\C$ can be regarded as associated with a Lorentz-Finsler metric $L$  has a double interest now. On the one hand, this allows one to identify the causal elements of $L$, including notably its lightlike pregeodesics, 
as elements inherent to any cone structure. On the other  hand,  the existence of $L$ yields an additional analytical tool to understand such elements. 


\subsection{Summary on maximizing Lorentz-Finsler causal geodesics}\label{s_61}
As a preliminary question, notice that  causal geodesics in a Finsler spacetime have properties of maximization among (piecewise smooth) causal curves completely analogous to those of classical spacetimes. This has already been   pointed out by several authors  \cite{AaJa16,Min15a}  and will be summarized here following \cite{ON}.
 Along this subsection, let $L: \bar A\rightarrow  [0,+\infty) $ be
 a Lorentz-Finsler metric, extended to some conic domain $A^*\supset \bar A$ (to avoid issues on differentiability).  Consider
its Chern connection on the whole $A^*$ and, then,  its geodesics 
 and  exponential map 
$\exp: \mathcal{D} \rightarrow M$,  
where  $\mathcal{D}\subset   A^*$ is  maximal and starshaped with $\mathcal{D}\setminus \mathbf{0}$  open. The  smooth variation of the solutions to the geodesic equation with the initial conditions implies the smoothness of $\exp$ 
 away from  $\mathbf{0}$. Let $\mathcal{D}_p:=\mathcal{D}\cap T_pM$ and $\exp_p = \exp|_{\mathcal{D}p}$, and consider the following two lemmas (the first one is a basic  result, see \cite[Prop. 6.5]{AaJa16}):

\begin{lemma}\label{lem:closetimelike}
Let $\alpha:[a,b]\rightarrow M$ be a causal curve which is not a pregeodesic. Then there exist   timelike curves from $\alpha(a)$ to $\alpha(b)$ arbitrarily close to $\alpha$. 
\end{lemma}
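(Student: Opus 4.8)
The plan is to promote the causal relation $\alpha(a)\le\alpha(b)$ to the \emph{chronological} relation $\alpha(a)\ll\alpha(b)$, and then, using the openness of the chronological futures $I^+$ together with the push-up property of the causal/chronological relations, to realize this relation by timelike curves confined to an arbitrarily thin tube around $\alpha$. Throughout I would fix a Lorentz-Finsler metric $L$ compatible with $\mathcal C$, which exists by Cor. \ref{t_PRINNCIPAL}, so as to have at my disposal its Chern connection, its geodesics, its exponential map $\exp$, and convex normal neighborhoods around every point.

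First I would localize the failure to be a geodesic. Since $\alpha$ is causal but not a pregeodesic, there is a parameter $s_0$ and an arbitrarily short subinterval $[s_1,s_2]\ni s_0$, contained in a convex neighborhood $U$ of $\alpha(s_0)$, on which $\alpha$ does not agree, up to reparametrization, with the $L$-geodesic joining $\alpha(s_1)$ to $\alpha(s_2)$. The core local claim is that $\alpha(s_1)$ and $\alpha(s_2)$ are joined inside $U$ by a timelike curve. Granting this, the causal arcs $\alpha|_{[a,s_1]}$ and $\alpha|_{[s_2,b]}$ together with the timelike shortcut give $\alpha(a)\le\alpha(s_1)\ll\alpha(s_2)\le\alpha(b)$, whence $\alpha(a)\ll\alpha(b)$ by push-up.

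For the local claim I would argue by cases according to how $\alpha$ fails to be a null geodesic on $[s_1,s_2]$. If $\dot\alpha$ lies in the open cone domain $A$ at some instant, the corresponding subarc is already timelike and produces a strictly timelike-related pair. If $\alpha$ has a genuine corner, where the one-sided velocities $\dot\alpha(s^-)\ne\dot\alpha(s^+)$ are future-causal, then their convex combinations lie in the open cone $A$ by the strong (strict) convexity of the cones (two distinct boundary directions have an interior chord; see Def. \ref{d1}(iv) and Prop. \ref{propiedades}), so the corner can be rounded by a short timelike segment. The remaining case, a \emph{smooth} all-null curve that is not a geodesic, is the delicate one: here I would compare $\alpha$ with the null geodesic between its endpoints and, by a second-order (L'H\^opital-type) computation in the spirit of \eqref{e_mu} using the fundamental-tensor identities of Lemma \ref{l(i)} and Prop. \ref{propiedades}, show that the deviation forces the radial function $r\mapsto L(\exp_{\alpha(s_1)}^{-1}(r))$ to become strictly positive at $\alpha(s_2)$, i.e. $\alpha(s_2)\in I^+(\alpha(s_1))$. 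Turning strong convexity into this strict second-order inequality is the main obstacle of the proof.

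Finally I would recover the closeness by working with a fine subdivision. Given any neighborhood $W$ of the image of $\alpha$, choose $a=t_0<\dots<t_N=b$ so that each $\alpha([t_i,t_{i+1}])$ lies in a convex neighborhood $U_i\subset W$ of small diameter, with the non-geodesic behavior confined to $[t_0,t_1]$. Performing the local shortcut on $[t_0,t_1]$ and then propagating the chronological relation across the successive $U_i$ by the push-up mechanism (each step using $q'\ll q\le r\Rightarrow q'\ll r$ inside $U_i$, with the new timelike arc kept inside $U_i$) produces a timelike curve from $\alpha(a)$ to $\alpha(b)$ lying in $\bigcup_i U_i\subset W$. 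Since $W$ was arbitrary, timelike curves arbitrarily close to $\alpha$ exist, as claimed.
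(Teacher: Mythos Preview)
The paper does not give its own proof of this lemma; it simply records it as ``a basic result'' and cites \cite[Prop.~6.5]{AaJa16}. Your sketch follows the same standard architecture as that reference (and, ultimately, the classical Lorentzian argument in \cite[Prop.~10.46]{ON}): localize in a convex normal neighborhood, produce a timelike shortcut wherever $\alpha$ fails to be a null pregeodesic, and then propagate by push-up through a fine chain of convex neighborhoods to obtain closeness.

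Two small remarks. First, invoking Cor.~\ref{t_PRINNCIPAL} is unnecessary here: in \S\ref{s_61} the Lorentz--Finsler metric $L$ is already fixed, and the lemma is a statement about that $L$. Second, your treatment of the ``delicate case'' (a smooth everywhere-null non-geodesic arc) is the only place where the argument is genuinely sketchy; the appeal to \eqref{e_mu} and an L'H\^opital computation is not quite the right mechanism. The standard argument (and the one in \cite{AaJa16}) uses a first-variation computation with the Chern connection: if $\dot\alpha$ is null but the covariant acceleration $D_{\dot\alpha}\dot\alpha$ is not proportional to $\dot\alpha$, one builds a variation with variational vector field $W$ satisfying $g_{\dot\alpha}(\dot\alpha,W)>0$ at the right places, which strictly increases $L$ along the varied curves. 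This is the step you should make precise; the rest of your outline is correct.
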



\begin{lemma}\label{lem:existD}
 For any $p\in M$ there   exists 
a neighborhood $\tilde D$ of zero in $T_pM$ such that $D:=\tilde D \cap A^*\subset T_pM$  is starshaped and connected,
and 
 $\exp_p$ is defined in $D$, being
 $\exp_p:D\rightarrow \exp_p(D)$
a diffeomorphism. 
\end{lemma}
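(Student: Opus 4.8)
The plan is to deduce the statement from the inverse function theorem applied to $\exp_p$ near the origin of $T_pM$, exactly as in the Riemannian or Lorentzian normal--neighborhood theorem, the only new feature being that the Chern connection (hence the geodesic spray and $\exp_p$) lives on the conic domain $A^*$ rather than on a full neighborhood of $0$. First I would record the two facts that make the argument work: $\exp_p$ is smooth on $\mathcal{D}_p\setminus\mathbf{0}$ (as already noted), and, writing $\exp_p(tv)=\gamma_v(t)$ for the geodesic with $\gamma_v(0)=p$, $\dot\gamma_v(0)=v$, the positive $2$-homogeneity of the spray gives $\exp_p(\lambda v)=\gamma_v(\lambda)$, so that $\exp_p$ extends continuously to $0$ by $\exp_p(0)=p$ with radial derivative $v$. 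A Jacobi-field computation along $\gamma_v$ (with $J(0)=0$) then shows, as $v\to 0$ inside $A^*_p$, that $d(\exp_p)_v\to \mathrm{Id}_{T_pM}$; this is the Finslerian analogue of $d(\exp_p)_0=\mathrm{Id}$.

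To turn this into a uniform statement I would extend the geodesic spray of $L$ from $A^*$ to a smooth, positively $2$-homogeneous spray on all of $T_pM\setminus\mathbf{0}$ (any smooth extension of the spray off the open cone $A^*_p$ will do, since only its germ along a cone-cap is needed), and let $\widetilde{\exp}_p$ be the resulting exponential map; it agrees with $\exp_p$ on $A^*_p$ and, being the exponential of a smooth spray, is $C^1$ on a full neighborhood of $0$ with $d(\widetilde{\exp}_p)_0=\mathrm{Id}$. The $C^1$ inverse function theorem then furnishes an open ball $\tilde D$ about $0$ on which $\widetilde{\exp}_p$ is a homeomorphism onto its image. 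Restricting to $D:=\tilde D\cap A^*\subset T_pM\setminus\mathbf{0}$, the map $\exp_p=\widetilde{\exp}_p$ there is injective (a restriction of an injective map), genuinely smooth (being away from $\mathbf{0}$), and has everywhere invertible differential; hence $\exp_p:D\to\exp_p(D)$ is a diffeomorphism. The starshapedness and connectedness of $D$ are then immediate from the conicity of $A^*_p$: taking $\tilde D$ to be a ball, for $v\in D$ the whole punctured segment $\{\lambda v:0<\lambda\le 1\}$ lies in $\tilde D$ (convexity) and in $A^*_p$ (conicity), so $D$ is starshaped in the punctured sense, and it is connected because $A^*$ is a conic domain (connected, radially foliated fibers) intersected with a ball about the origin.

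The main obstacle is precisely this regularity at the origin: as in the purely Finslerian case, $\exp_p$ is in general only $C^1$ (not $C^2$) at $0$, so one cannot invoke the smooth inverse function theorem at $0$ and must work with its $C^1$ version --- which is exactly why the conclusion is stated on $D=\tilde D\cap A^*$, away from the origin, where $\exp_p$ is smooth. A secondary point to handle carefully is that $\exp_p$ is only defined on the cone $A^*$, so the inverse function theorem cannot be applied to $\exp_p$ directly; extending the spray to a full punctured neighborhood repairs this, and one should check that the extension does not alter the germ of $\exp_p$ on $A^*_p$ --- which holds because a geodesic with initial velocity in the open set $A^*_p$ remains, for a short time, in $A^*_p$, where the original and extended sprays coincide.
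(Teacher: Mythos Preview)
Your proposal is correct and follows essentially the same route as the paper: extend the (anisotropic) connection/spray from $A^*$ to all nonzero directions, use that the resulting exponential is $C^1$ at the origin with identity differential (as in Whitehead's classical argument), apply the $C^1$ inverse function theorem to obtain $\tilde D$, and then restrict to $D=\tilde D\cap A^*$ where smoothness is genuine. The paper phrases the extension in terms of the Christoffel symbols $\Gamma^k_{ij}$ (using their $0$-homogeneity to extend them off the sphere bundle over $\bar A$), while you phrase it as extending the spray directly; these are equivalent. One small slip: your extension must live on $TU\setminus\mathbf{0}$ for a neighborhood $U$ of $p$, not merely on $T_pM\setminus\mathbf{0}$, since geodesics leave the fiber $T_pM$ immediately---but you clearly intend this, as otherwise $\widetilde{\exp}_p$ would not exist.
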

\begin{proof}
 This is the  analog to normal neighborhoods and it can be obtained from  a local extension of
the Chern connection  beyond $A^*$, regarding it as  an anisotropic connection  (see  \cite[Chapter 7]{Sh01} or \cite{Aniso16}).  Taking a  coordinate neighborhood $U$  around $p\in M$, 
the Chern connection is determined by the Christoffel symbols $\Gamma_{ij}^k:A^*\cap TU\rightarrow \R$ 
(see \cite[\S 2.4 and 2.6]{Aniso16}). 
As they are  0-homogeneous, they can be regarded as functions with domain in the unit bundle $S_RM$ (for some auxiliary Riemannian metric).  Being $S_RM\cap TU \cap \bar A $  a 
closed subset of $S_R M \cap TU$, all $\Gamma_{ij}^k$  can be extended  to functions  $\tilde \Gamma_{ij}^k:  TU \setminus\mathbf{0} \rightarrow \R  $
such that $\tilde \Gamma_{ij}^k=\Gamma_{ij}^k$ on $TU \cap \bar A$, $\tilde \Gamma_{ij}^k=0$ on $TU \setminus A^*$  and they are homogeneous of degree 0 everywhere. 
This anisotropic  connection has a natural exponential map, $\tilde\exp_p$,  which is $C^1$  and whose differential in $p$ is the identity\footnote{The fact that this exponential map is $C^1$ and it admits convex neighborhoods was 
proved by Whitehead \cite{whitehead} 
and it can also be proved 
as in \cite[\S 5.3]{BaChSh00}, where  the exponential map of a Finsler metric is considered.}.
Applying the inverse function theorem we obtain a  starshaped  neighborhood $\tilde D$ of $0$ in $T_pM$ such that $\tilde\exp_p$ is a diffeomorphism in $\tilde\exp_p(\tilde D)$,  thus, satisfying the required properties. 
\end{proof}
\begin{rem}\label{globalCon}
 In the previous proof, the local extension of the Chern  connection was enough to obtain normal neighborhoods. However, this connection can be extended globally to the  tangent bundle  as follows.  Consider for each $p$ the neighborhood  $U_p$  and the anisotropic connection $ \tilde \nabla^{(p)}  $  on $U_p$, defined for all the non-zero directions of tangent bundle $TU_p$, and which extends the Chern connection of $L$ as  in the above lemma. Let $\{\mu_i\}_{i\in \N}$ be a subordinate partition of unity, with each supp$(\mu_i)$ included in some $U_{p_i}$. 
So, the required connection is just 
$$\tilde \nabla:=  \sum_i (\mu_i\circ \pi)  \tilde \nabla^{(p_i)}.$$
 Observe that, even though the anisotropic connections do not constitute a $C^\infty(M)$-module, 
the expression above does define an anisotropic connection; in particular, the Leibniz rule is satisfied  because\footnote{ The space of all the anisotropic connections (as well as the space of all the linear connections) is naturally an affine space, and the condition $\sum_i \mu_i=1$ can be interpreted as the natural restriction of local  barycentric coordinates. }  $\sum_i (\mu_i\circ \pi)=1$.

\end{rem}
\begin{lemma}\label{lem:timelikeexp}
If $\beta:[a,b]\rightarrow T_pM$ is a (piecewise smooth) curve such that its image   lies  in a domain $D  \subset  T_pM$ of $\exp_p$  as in Lemma \ref{lem:existD},  and $\alpha=\exp_p\circ \beta$  is a causal curve, then $\beta$ remains inside the  causal cone of $\C_p$, and if $\alpha$ is timelike, without touching $\C_p$. 
\end{lemma}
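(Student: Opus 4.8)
The plan is to reduce the statement to a first--exit argument governed by the Gauss Lemma. As the lemma is meant for causal curves issuing from $p$, I take $\alpha(a)=p$, i.e.\ $\beta(a)=0$ (without this the conclusion already fails in flat Minkowski space). Two inputs are needed. First, the anisotropic (Chern) connection of Remark~\ref{globalCon} is compatible with $L$ along its own geodesics, so $L(\dot\gamma)$ is constant along each geodesic; hence the radial geodesic $\gamma_x(t)=\exp_p(tx)$ preserves the causal character of $x$ and, by continuity (the future and past light cones being different components), its time orientation. Writing $T_x:=d(\exp_p)_x(x)=\dot\gamma_x(1)$, this means $T_x$ is future timelike when $x\in A_p$ and future lightlike when $x\in\C_p$. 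Second, I will use the Gauss Lemma for this connection: for $x$ in the (extended) domain and $w\in T_{p}M$,
\[ g_{T_x}\big(T_x,\,d(\exp_p)_x(w)\big)=g_x(x,w). \]

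For the timelike case I show $\beta$ never reaches $\C_p$. Near $s=a$ one has $d(\exp_p)_0=\mathrm{Id}$, so $\beta'(a)=\alpha'(a)$ is future timelike and $\beta$ enters the open cone $A_p$. Let $s_0$ be the first instant at which $\beta$ meets $\C_p$, and set $x_0:=\beta(s_0)\in\C_p$ (so $x_0\neq0$). Since $L_p\circ\beta>0$ on $(a,s_0)$ and $L_p(x_0)=0$, the left derivative yields $2\,g_{x_0}(x_0,\beta'(s_0))=dL_{x_0}(\beta'(s_0))\le0$ by Lemma~\ref{l(i)}. But the Gauss Lemma at $x_0$ gives $g_{T_{x_0}}(T_{x_0},\alpha'(s_0))=g_{x_0}(x_0,\beta'(s_0))$, where $T_{x_0}$ is future lightlike and $\alpha'(s_0)$ is future timelike, hence causal and not proportional to $T_{x_0}$; part~(ii) of Prop.~\ref{indexn-1} then forces $g_{T_{x_0}}(T_{x_0},\alpha'(s_0))>0$, a contradiction. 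The only alternative, $\beta(s_0)=0$, would make $\alpha$ a closed timelike curve; shrinking $D$ so that $\exp_p(D)$ is a convex normal neighborhood (such neighborhoods exist for the $C^1$ exponential map, cf.\ the footnote in the proof of Lemma~\ref{lem:existD}) excludes it. Thus $\beta([a,b])\subset A_p$.

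For a merely causal $\alpha$ the boundary argument degenerates (at a contact point the velocity may be radial lightlike, and the inequality above becomes an equality), so I reduce to the timelike case. If $\alpha$ is a pregeodesic it is radial, $\beta$ runs along a single ray $\R^+x$ with $x$ causal, which lies in $\bar A_p$ by conicity. Otherwise Lemma~\ref{lem:closetimelike} provides timelike curves $\alpha_n$ from $p$ converging uniformly to $\alpha$; for large $n$ they lie in $\exp_p(D)$, so $\beta_n:=\exp_p^{-1}\circ\alpha_n$ are defined and, by the timelike case, stay in $A_p$. Continuity of $\exp_p^{-1}$ gives $\beta_n\to\beta$, whence $\beta(s)\in\overline{A_p}=\bar A_p\cup\{0\}$; the vertex is ruled out as before, leaving $\beta([a,b])\subset\bar A_p$.

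The real work lies in the first input at lightlike directions: one must know the Gauss Lemma holds when the radial vector $x_0$ is lightlike. This is where I would concentrate, deriving the identity from metric compatibility of the Chern connection along $\gamma_{x_0}$ (with reference vector $\dot\gamma_{x_0}$) exactly as in the Finslerian positive--definite case, and checking that the extension of the connection beyond $\bar A$ (Remark~\ref{globalCon}) keeps it smooth up to $\C$. The remaining points---the one--sided derivative at a possible corner of $\beta$, the future orientation of $T_{x_0}$, and the exclusion of vertex returns---are routine.
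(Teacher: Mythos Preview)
Your argument is correct and rests on the same two ingredients as the paper's proof: the Gauss Lemma and, for the general causal case, the reduction via Lemma~\ref{lem:closetimelike}. The paper's treatment of the timelike case is tactically different, however, and avoids precisely the technical burden you flag at the end. Instead of waiting until the first exit $s_0$ and invoking the Gauss Lemma at the \emph{lightlike} vector $x_0=\beta(s_0)$, the paper computes at interior points: whenever $\beta(s)\in A_p$ one has
\[
\frac{d}{ds}L_p(\beta(s))=2g_{\beta(s)}(\beta(s),\dot\beta(s))=2g_{d(\exp_p)_{\beta(s)}(\beta(s))}\big(d(\exp_p)_{\beta(s)}(\beta(s)),\dot\alpha(s)\big)>0,
\]
because both entries are future timelike. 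Thus $L_p\circ\beta$ is strictly increasing on each interval where it is positive, and since $L_p(\beta)>0$ on some initial $[a,a+\varepsilon]$, it never vanishes. This uses the Gauss Lemma only at timelike radial directions, so the issue of its validity on $\C_p$ never arises. Your first--exit contradiction is equivalent in the end (and the Gauss Lemma does extend to lightlike directions), but the paper's monotonicity version is shorter and sidesteps the boundary analysis entirely. For the causal case both proofs proceed identically; your version spells out the approximation $\alpha_n\to\alpha$ and the separate pregeodesic case more explicitly than the paper does.
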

\begin{proof}
When $\alpha$ is timelike, it follows the same lines as \cite[Lemma 5.33]{ON}. Namely, use  the Gauss Lemma (see \cite[Rem. 3.20]{JavSan11}) 
in order to show that $L_p(\beta)$ is always positive. Indeed, it is positive at least at a small interval $[a,a+\varepsilon]$ because $\dot\beta(a)\equiv\dot\alpha(a)$  is a timelike vector, and its derivative is  positive  by the Gauss Lemma, as 
\[\frac{d}{dt}L_p(\beta)=2g_{\beta}(\beta,\dot\beta)=   2g_{d(\exp_p)_\beta(\beta)}(d(\exp_p)_\beta(\beta),\dot\alpha) >0\]  (the latter because  both $d\exp_p(\beta)$ and $\dot\alpha$ are  timelike, recall  \cite[Prop. 2.4]{AaJa16}). This guarantees that   $\beta$ lies  in the cone up to the first  break; however,  here the Gauss Lemma can be used again to guarantee that  $\beta $ remains 
in the timelike cone. Assume now that $\alpha$ is causal. If $\alpha$ is not a lightlike pregeodesic, then there exists a timelike curve very close to $\alpha$  (this is a consequence of 
Lemma~\ref{lem:closetimelike}; however, to fix the endpoint is not required here),  which reduces the proof to the first case in that $\alpha$ is timelike. 
\end{proof}
\begin{prop}\label{minimizeconic} 
Let $(M,F)$ be a Lorentz-Finsler metric, $p\in M$  and $D$ an open subset of $T_pM$ as in Lemma \ref{lem:existD}.  Then, for any $q\in \exp_p(D)$ the
radial geodesic from $p$ to $q$ is, up to reparametrizations, the
unique maximizer of the Finslerian separation among the causal curves
contained in $\exp_p(D)$.
\end{prop}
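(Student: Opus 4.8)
The plan is to imitate the classical Lorentzian argument (as in \cite[Prop.~5.34]{ON}), replacing the orthogonal splitting by the Finslerian Gauss Lemma and by a reverse Cauchy--Schwarz inequality, the latter deduced from the concavity of $\sqrt{L}$ on the causal cone. Fix $q=\exp_p(w)\in\exp_p(D)$ and assume first $q\in I^+(p)$, so that $w$ is timelike. Let $\alpha:[0,b]\to\exp_p(D)$ be any future-directed causal curve from $p$ to $q$, and set $\beta=\exp_p^{-1}\circ\alpha$, a curve in $D$ from $0$ to $w$; by Lemma~\ref{lem:timelikeexp}, $\beta(t)$ stays in the causal cone $\C_p$ for every $t$. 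I would introduce the radial function $r(t):=\sqrt{L(\beta(t))}$, which is the Finslerian length of the radial geodesic $s\mapsto\exp_p(s\beta(t))$ reaching $\alpha(t)$; in particular $r(0)=0$ and $r(b)=\sqrt{L(w)}$ equals the length of the radial geodesic $\gamma_w$ from $p$ to $q$.

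The first computation uses Lemma~\ref{l(i)} and the Gauss Lemma. From $r^2=L(\beta)$ and $dL_\beta(\dot\beta)=2g_\beta(\beta,\dot\beta)$ one gets $r\dot r=g_\beta(\beta,\dot\beta)$, and the Finslerian Gauss Lemma (\cite[Rem.~3.20]{JavSan11}, as already used in the proof of Lemma~\ref{lem:timelikeexp}) yields $g_\beta(\beta,\dot\beta)=g_R(R,\dot\alpha)$, where $R:=d(\exp_p)_\beta(\beta)$ is the velocity at $\alpha(t)$ of the radial geodesic, a timelike vector with $L(R)=L(\beta)=r^2$. Hence $\dot r=g_R(R,\dot\alpha)/r$ wherever $r>0$.

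The key pointwise inequality to establish is the reverse Cauchy--Schwarz $g_R(R,\dot\alpha)\ge\sqrt{L(R)}\,\sqrt{L(\dot\alpha)}$, with equality if and only if $\dot\alpha$ is a positive multiple of $R$. I would deduce it from the concavity of $\sqrt{L}$ on the (convex) future cone: a direct computation gives $\mathrm{Hess}(\sqrt{L})_v(w,w)=h_v(w,w)/\sqrt{L(v)}$, where $h_v$ is the angular metric, so part~(i) of Prop.~\ref{indexn-1} shows this Hessian is negative semi-definite with radical the radial direction; thus $\sqrt{L}$ is concave and strictly concave transversally to radial directions. The tangent-hyperplane estimate for a concave function, together with $d(\sqrt{L})_v(w)=g_v(v,w)/\sqrt{L(v)}$, gives $\sqrt{L(w)}\le g_v(v,w)/\sqrt{L(v)}$ for $v$ timelike and $w$ in the closed cone, which is precisely the desired inequality (with $v=R$, $w=\dot\alpha$), while strict transversal concavity forces equality only when $w$ lies on the radial line through $v$. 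Combining, $\dot r\ge\sqrt{L(\dot\alpha)}$ at every parameter, and integrating (over $[\varepsilon,b]$ and letting $\varepsilon\to0$ to avoid the vertex) gives $\int_0^b\sqrt{L(\dot\alpha)}\,dt\le r(b)-r(0)=\sqrt{L(w)}$, the length of $\gamma_w$, with equality iff $\dot\alpha\parallel R$ throughout, i.e.\ iff $\alpha$ is a reparametrization of $\gamma_w$.

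Finally, for the boundary case $q\in J^+(p)\setminus I^+(p)$, where the radial geodesic is lightlike and the separation is $0$, I would argue that any future-causal curve $\alpha$ from $p$ to $q$ in $\exp_p(D)$ must itself be a lightlike pregeodesic: otherwise Lemma~\ref{lem:closetimelike} would produce a timelike curve from $p$ to $q$, forcing $w$ timelike and contradicting that $\exp_p$ is a diffeomorphism on $D$; and a causal pregeodesic issuing from $p$ corresponds under $\exp_p^{-1}$ to a radial segment, so $\alpha$ is the radial lightlike geodesic up to reparametrization. The main obstacle is the reverse Cauchy--Schwarz step: the fundamental tensors on the two sides are evaluated at different directions (the tensor $g_R$ on the left versus the $g_{\dot\alpha}$ implicit in $L(\dot\alpha)$ on the right), and the concavity-of-$\sqrt{L}$ route is what reconciles this mismatch cleanly while simultaneously delivering the rigidity (equality) statement.
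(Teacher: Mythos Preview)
Your argument is essentially the paper's own: Gauss Lemma to transfer to $T_pM$, the reverse fundamental inequality $g_R(R,\dot\alpha)\ge\sqrt{L(R)}\sqrt{L(\dot\alpha)}$ to obtain $\dot r\ge\sqrt{L(\dot\alpha)}$, integration, and Lemma~\ref{lem:closetimelike} for the lightlike endpoint case. Your concavity-of-$\sqrt{L}$ derivation of the inequality is exactly the proof the paper gives in its Appendix (Prop.~\ref{LorentzTriangle}), so even that step matches.

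One technical point you skate over and the paper treats explicitly: a causal curve $\alpha$ from $p$ to a chronological $q$ may begin as a lightlike pregeodesic on some initial interval $[0,c]$; on that segment $\beta$ sits on $\C_p$ (not just at the vertex), so $r\equiv 0$ there and your pointwise inequality---which requires $R$ timelike---is unavailable. Your ``$\varepsilon\to0$ to avoid the vertex'' does not cover this. The paper introduces $c$ as the last instant with $\alpha|_{[0,c]}$ a lightlike pregeodesic, notes that $\exp_p^{-1}(\alpha|_{(c,b]})$ then lies in the \emph{open} timelike cone (Lemmas~\ref{lem:closetimelike} and~\ref{lem:timelikeexp}), and integrates from $c$; the segment $[0,c]$ contributes zero to both sides, and the equality discussion at the end forces $c=0$. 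This is an easy patch, but you should state it.
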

\begin{proof} 
 Assume first that the radial geodesic $\sigma:[0,b]\rightarrow \exp_p(D)$ is lightlike. If  there is any other causal curve 
from $p$ to $q$ which is not a lightlike pregeodesic, then by Lemma \ref{lem:closetimelike}, there is a timelike curve from $p$ to $q$ and by Lemma \ref{lem:timelikeexp}, $\exp_p^{-1}(q)$ lies in the timelike cone. Therefore,  there is a timelike radial geodesic from $p$ to $q$, in contradiction with the fact that $\exp_p$ is  a diffeomorphism on $D$. It follows that $\sigma$ is the only causal curve from $p$ to $q$ in $\exp_p(D)$ and it is also the unique maximizer. 

 Assume now that the radial geodesic $\sigma:[0,b]\rightarrow \exp_p(D)$ is timelike. If $\alpha:[0,b]\rightarrow M$ is another causal curve from $p$ to $q$ in $\exp_p(D)$ which is not  a reparametrization of $\sigma$, let $c\in [0,b)$ be the biggest instant such that $\alpha|_{[0,c]}$ is a lightlike pregeodesic. It follows from Lemmas \ref{lem:closetimelike} and \ref{lem:timelikeexp} that $\exp_p^{-1}(\alpha|_{(c,b]})$ is 
 contained in the timelike cone of $T_pM$. Let $\tilde P$ be the position vector in $T_pM$ and $r=\sqrt{L}$ defined in the timelike cone of $T_pM$.  So, $\tilde{\mathfrak  u} :=  \tilde P/r$ is a unit timelike vector of $T_pM$ for every $\tilde P$ in the timelike cone, and  putting  $  {\mathfrak  u}  :=  d(\exp_p)_{\tilde {\mathfrak  u}}(\tilde{\mathfrak  u})$,  by the 
 fundamental inequality (see Appendix~ \ref{s_a2}) $F(\dot\alpha(t))\leq g_{ {\mathfrak  u}}({\mathfrak  u},\dot\alpha(t))$ for every $t\in (c,b)$ (recall that $F=\sqrt{L}$). Moreover,  let us define $\beta=\exp_p^{-1}(\alpha)$ and observe that $d_vr=\frac{1}{2r} d_vL=\frac{1}{r} g_v(v,\cdot)$ and then $g_{\tilde {\mathfrak  u}}({\tilde {\mathfrak  u}},\dot\beta(t))=\frac{d(r\circ\beta)}{dt}$. It follows that
 \begin{multline*}
 \int_0^b F(\dot\alpha)dt=\int_c^b F(\dot\alpha)dt\leq \int_c^b g_{\mathfrak  u}({\mathfrak  u},\dot\alpha(t))dt= \int_c^b g_{\tilde {\mathfrak  u}}({\tilde {\mathfrak  u}},\dot\beta(t))dt\\=\int_c^b \frac{d(r\circ\beta)}{dt} dt=  r(\exp^{-1}(q)),
 \end{multline*}
 where we have used the Gauss Lemma (see \cite[Rem. 3.20]{JavSan11}) in the second equality,  and the equality holds if  and only if $\alpha$ is a reparametrization of $\sigma$. Observe that if $\alpha|_{[c,b]}$ is a reparametrization of $\sigma$, then $\exp_p^{-1}(\sigma|_{[c,b]})$ cannot touch the lightlike cone at $c$ away from $0$ and it follows that $c=0$. 
 \end{proof}

\notshow\input{conformes}\notshowend


\subsection{Cone geodesics vs lightlike pregeodesics}\label{s_62}


Recall that cone geodesics were defined as locally horismotic curves inherent to any cone structure (Def. \ref{def_cone_g}).
\begin{thm} \label{t_CONE} Let $\C$ be a cone structure 
and   $\gamma: I\subset \R\rightarrow M$ a 
curve. The following properties are equivalent:

(i) $\gamma$ is a  cone geodesic of $\C$ 

(ii) $\gamma$ is a lightlike pregeodesic for one (and, then, for all) Lorentz-Finsler metric $L$  with cone $\C$. 

\smallskip In particular, all anisotropically equivalent Lorentz-Finsler metrics have the same lightlike pregeodesics.

\end{thm}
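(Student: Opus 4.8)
The plan is to reduce the whole equivalence to a local statement inside a convex normal neighborhood, where the horismotic relation can be read off directly from the exponential map together with the uniqueness properties of causal geodesics established in Prop.~\ref{minimizeconic}. First I would fix any Lorentz-Finsler metric $L$ compatible with $\C$ (which exists by Cor.~\ref{t_PRINNCIPAL}) and, for each $p\in M$, choose a totally normal (convex) neighborhood $U$ as provided by Lemma~\ref{lem:existD} together with the Whitehead-type argument in its footnote, so that any two causally related points of $U$ are joined by a unique causal geodesic contained in $U$. The key local dictionary I would establish is that, for $q\in U=\exp_p(D)$,
\[
p\rightarrow_U q \quad\Longleftrightarrow\quad \exp_p^{-1}(q)\in\C_p,
\]
i.e.\ the horismotic relation corresponds exactly to the radial geodesic from $p$ to $q$ being lightlike. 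This follows from Lemma~\ref{lem:timelikeexp} (a causal, resp.\ timelike, curve in $U$ pulls back under $\exp_p^{-1}$ to a curve remaining in the causal, resp.\ timelike, cone of $T_pM$) combined with Prop.~\ref{minimizeconic}: if $\exp_p^{-1}(q)\in\C_p$ then the radial lightlike geodesic is the unique causal curve from $p$ to $q$ in $U$, so $q\in J^+(p)\setminus I^+(p)$; while if $\exp_p^{-1}(q)$ is timelike then the radial geodesic is timelike and $q\in I^+(p)$.

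With this dictionary, the implication $(ii)\Rightarrow(i)$ is immediate: if $\gamma$ is a lightlike pregeodesic, then for $s<s'$ close to $s_0$ the arc $\gamma|_{[s,s']}$ is precisely the unique causal geodesic in $U$ joining $\gamma(s)$ to $\gamma(s')$, and it is lightlike, so $\gamma(s)\rightarrow_U\gamma(s')$; the reverse direction of the biconditional holds because $\gamma$ is future-directed. For $(i)\Rightarrow(ii)$ I would first note that local horismoticity forces $\gamma$ to be injective and future-directed on $I_\epsilon$ (a coincidence $\gamma(s)=\gamma(s')$ with $s\neq s'$ would make the biconditional fail in one of the two orders, since $p\rightarrow_U p$ holds in a causal neighborhood). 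Then I would rule out corners: for $s_1<s<s_2$ in $I_\epsilon$, local horismoticity yields lightlike geodesic segments from $\gamma(s_1)$ to $\gamma(s)$ and from $\gamma(s)$ to $\gamma(s_2)$, whose concatenation is a causal curve in $U$ from $\gamma(s_1)$ to $\gamma(s_2)$; by the uniqueness in Prop.~\ref{minimizeconic} it must coincide with the radial lightlike geodesic $\sigma$ joining $\gamma(s_1)$ to $\gamma(s_2)$. Hence every intermediate $\gamma(s)$ lies on $\sigma$, so the image of $\gamma|_{[s_1,s_2]}$ is a single lightlike geodesic arc and $\gamma$ is a lightlike pregeodesic.

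I expect this last step to be the main obstacle. The delicate point is that a concatenation of two \emph{distinct} lightlike geodesic segments is a causal curve that is not a pregeodesic, so Lemma~\ref{lem:closetimelike} would produce a timelike curve between its endpoints and thereby contradict horismoticity; feeding this observation through the uniqueness statement of Prop.~\ref{minimizeconic} is exactly what pins the image of $\gamma$ onto a single geodesic and excludes any broken, non-geodesic behaviour. Care must be taken that all the auxiliary causal curves remain inside the neighborhood where uniqueness holds, which is precisely why the convex (totally normal) choice of $U$ is essential.

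Finally, for the \textbf{one (and then all)} clause and the concluding sentence, I would observe that condition $(i)$ depends only on $\C$, since the horismotic relation is determined by the cone structure alone, whereas the equivalence $(i)\Leftrightarrow(ii)$ has been proved for an \emph{arbitrary} compatible $L$. Thus if $\gamma$ is a lightlike pregeodesic for one such $L$, it is a cone geodesic, and hence a lightlike pregeodesic for every other compatible $L'$. Since by Th.~\ref{t_anisotropic} two Lorentz-Finsler metrics are anisotropically equivalent precisely when they share the cone structure $\C$, all anisotropically equivalent Lorentz-Finsler metrics have the same lightlike pregeodesics.
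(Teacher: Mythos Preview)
Your proof is correct and follows essentially the same route as the paper: both directions are reduced to the maximizing/uniqueness properties of causal geodesics in a normal neighborhood (Prop.~\ref{minimizeconic}), together with the existence of a compatible $L$ (Cor.~\ref{t_PRINNCIPAL}). You have simply unpacked in detail the steps the paper leaves implicit, in particular the need for a convex (totally normal) neighborhood and the concatenation argument that forces a locally horismotic curve onto a single lightlike geodesic arc.
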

\begin{proof} 
{\em (ii)} $\Rightarrow$ {\em (i)}. Straightforward from the maximizing properties of lightlike pregeodesics stated in Prop. \ref{minimizeconic}. 

{\em (i)} $\Rightarrow$ {\em (ii)}. Cor. \ref{t_PRINNCIPAL} ensures that there exists at least one $L$; then, the uniqueness of the maximizing properties in Prop. \ref{minimizeconic} concludes.
\end{proof}
\begin{rem} \label{r_CONE} The existence of the metric $L$ compatible with $\C$ allows us to introduce  an exponential map and recover all the classical Causality Theory of spacetimes. This includes the so-called {\em time separation} or {\em Lorentzian distance} for $L$, which is not conformally invariant. 

However, just the exponential for lightlike geodesics suffices  for  conformally invariant ones. So, from a practical viewpoint,  cone triples $(\Omega, T, F)$ associated with $\C$ may yield a notable simplification.  Remarkably,  the  continuous Lorentz-Finsler metric $G$ associated with
$(\Omega, T, F)$ is a very simple metric  and it suffices for the computation 
of cone geodesics (even though $G$  had  the drawback of being non-smooth in the 
$T$-direction,  see Prop.~\ref{l_G},  such a direction is timelike; so,  its lightlike 
geodesics are determined by equations which 
depend on smooth elements at the cone). 

\end{rem}
 Finally, let us consider other natural notions inherent to $\C$. 
\begin{defi}
 Given a cone structure $(M,\C)$,  a submanifold $P\subset M$ is {\em orthogonal to a cone geodesic} $\gamma:[a,b]\rightarrow M$ in $\gamma(a)\in P$ if $T_{\gamma(a)}P\subset T_{\dot\gamma(a)}\C_{\gamma(a)}$.
 \end{defi}
 Clearly,  this definition coincides with the concept of orthogonality provided by any Lorentz-Finsler metric compatible with $(M,\C)$. 
 Then, the notions of conjugate and focal points can also be extended to $\C$. Indeed, the   invariance of lightlike pregeodesics by means of anisotropically equivalent transformations can also be proven by a direct study of geodesic equations \cite{JavSoa18}. Furthermore, lightlike geodesics cannot be maximizers  (among close causal curves)  of the Finslerian separation after the first conjugate point (see \cite[Th. 6.9]{AaJa16}),  which implies that the first conjugate point must be invariant by anisotropic transformations. In fact, it can be shown that all the conjugate and focal points are invariant by anisotropic transformations (see \cite{JavSoa18} and \cite[Th. 2.36]{MinSan}  for the Lorentzian case).  Summing up, one has the following consistent notion. 
\begin{defi} \label{d_focal}
Given a cone structure $(M,\C)$, a cone geodesic $\gamma:[a,b]\rightarrow M$ and a submanifold $P$ orthogonal to $\gamma$ in $\gamma(a)$,  an instant $s_0\in (a,b]$ is  {\em $P$-focal with multiplicity}  $r\in\N$, if it is $P$-focal with that multiplicity for one (and then all) Lorentz-Finsler metric compatible with $\C$.
\end{defi}

\subsection{Applications: Zermelo navigation problem and Wind Finsler} \label{s_63} 
Zermelo problem studies a (non-relativistic) object whose maximum speed at each point depends on both, the point and 
the (oriented) direction of its velocity. For such an object,   time-minimizing trajectories are searched. In classical Zermelo's, the variation of the velocity with the direction is determined by a vector field $W$ which represents the effect of a (time-independent) ``wind'' whose strength cannot be bigger than the maximum velocity developed by its engine. In this case,  it is known that such trajectories must be  geodesics for a certain Randers metric; this is determined by a ``background'' Riemannian one $g_0$ and the wind $W$, which must satisfy $g_0(W,W)<1$ (see  \cite{BCS04}).  Our aim here is to explain how cone geodesics permit to solve such a problem in a much more general setting,  including the possibility that the wind depends also on the time and it has arbitrary strength.  Obviously, this enlarges widely the applicability of the model (notice also that its applications include possibilities  far from the original one; see, for example, recent \cite{Markv16} about wildfire spread,  and the more recent one \cite{Gib17}). 

Let us start with the case when the object moves on a smooth manifold $S$ and  its  possible maximum velocities at each point $x$ depend on the direction. Then these maximum velocities are represented pointwise by the unit sphere for a Minkowski norm; globally, they determine a hypersurface  in $TS$ which corresponds with the indicatrix $\Sigma$ of a  (1-homogeneous)  Finsler metric $Z$, called the {\em Zermelo metric}. 
If $S$ is endowed with an auxiliary Riemannian metric $g_0$ with norm $|\cdot |_0$, then the maximum velocities can be represented by a positive function
$v_m: U_0S\rightarrow \R$, where $U_0$ is the $g_0$-unit bundle on $S$ and $v_m$ is determined by $v_m(u)u\in \Sigma$ for all $u\in U_0M$; as a consequence, $Z(u)=1/v_m(u)$.  Let $x,y\in S$ and let $\gamma:[a,b]\rightarrow S$ be a curve from $x$ to $y$ parametrized by $g_0$-length.
The time elapsed by  an object moving at maximum speed from $x$ to $y$ along $\gamma$ is given by:
\[T=\int_a^b \frac{ds}{v_m(\dot\alpha(s))}= 
\int_a^b Z(\dot\alpha(s)) ds.
\]
That is, the time is the length computed with the Finsler metric $Z$ (in particular, this length is independent of the reparameterization of $\alpha$, which can be dropped). Therefore, minimizing the time  for travelling from $x$ to $y$ is equivalent to finding a minimizing geodesic for $Z$. What is more, the functional {\em  arrival time  (at maximum speed) } or just $AT$ defined on the set of all the paths\footnote{This paths are {\em oriented}, that is, starting at $x$ and ending at $y$; recall that, in general $v_m(u)\neq v_m(-u)$ and, thus, $Z$ is not a reversible Finsler metric.} $\alpha$ from $x$ to $y$ becomes equal to the functional $Z$-length. So, the critical points for $AT$ are equal to the critical points for $Z$, i.e., the (minimizing or not) pregeodesics of $Z$. Now, consider the following extensions to these problems.

\subsubsection{Time-dependent Zermelo problem}
When the maximum speeds are time-dependent, then the natural setting of the problem is the following. Consider the product manifold $M=\R\times S$, where the natural projection $t: \R\times S\rightarrow \R$ represents the (non-relativistic) time and let $\Omega=dt$ and  $T=\partial_t$. 
Now, the $t$-dependent indicatrices provide a Finsler metric $Z$ on the bundle $\ker (\Omega)$ and, so, we have a  cone triple $
(\Omega =  dt,\partial_t,Z)$ and its corresponding cone structure $\C$.  Observe that at every point $(t,x)\in \R\times S$, $\ker(\Omega)$ can be identified with $T_xS$ 
 (i.e.
 $\ker(\Omega)_{(t,x)}\equiv \{t\}\times T_xS$) 
 and we can interpret $Z$ as a non-negative function $$Z:\R\times TS\rightarrow \R , $$ 
which is smooth   away from $\R\times \bf 0$ (being $\bf 0$ the zero section of  $ TS  $) 
 such that each $Z(t,\cdot)$,  $t\in\R$, is a Finsler metric  on $S$. 
Choosing  an instant of departure $t_0\in\R$, we must consider curves $\tilde \alpha$ 
which depart from $(t_0,x)$ and arrive at $\R\times \{y\}$, and look for first  arriving (or critical arriving) ones. 
With no loss of generality
we can assume that they are parametrized by $t$, i.e., $\tilde\alpha(t)=(t,\alpha(t))$, with $t\in [t_0,t_0+AT(\alpha)]$. Now, the restriction of travelling at a speed  no bigger  than  $v_m$ means that $\tilde \alpha$ is a {\em causal curve} for $\C$, and  $AT(\alpha)$ is again interpreted as the arrival time. The requirement of travelling  at maximum speed is equivalent to consider lightlike curves for $\C$. Observe that given a piecewise-smooth curve $\beta:[a,b]\rightarrow S$, there exists a unique (future-directed) lightlike curve $\tilde\beta:[a,b]\rightarrow \R\times S$, with $\tilde\beta(s)=(t(s),\beta(s))$.  Indeed $t(s)$ is obtained as a solution of $\dot t(s)=Z(t(s),\dot\beta(s))$ using \eqref{e_F} and, so the  arrival time  is computed as 
\begin{equation}\label{time-dependentcase}
 AT(\beta)= t(b)-t(a)  = \int_a^b Z(t(s),\dot\beta(s))ds
\end{equation}
 (recall that  the integral in \eqref{time-dependentcase} is independent of reparametrizations by the positive homogeneity of $Z$ in the second component).

 Now,  consider the set 
$\mathcal{P}_{((t_0,x),y)}$ of all the (piecewise smooth) lightlike curves from $(t_0,x)$ to $\R\times\{y\}$.
 The first arriving causal curve (if it exists) must be a lightlike curve and its arrival point will be  the first one  in the intersection of $J^+(t_0,x)\cap (\R\times\{y\})$. It is known in classical Causality Theory that, if this curve  exists, then it must be a lightlike pregeodesic. 
The extension of this result to the Finsler case  solves the time-dependent Zermelo problem  and it is a consequence of  Th.
\ref{t_CONE}.  

\begin{cor}\label{c_Zermelo}
Any local minimum of the $AT$ functional on $\mathcal{P}_{((t_0,x),y)}$  (i.e. any solution to the time-dependent
  Zermelo problem)  
 must be a cone geodesic of $\C$ without conjugate points  except, at most,  at  the endpoint.
Moreover:

\begin{itemize}
\item[(a)] A global minimum exists if the causal futures $J^+(t_0,x)$ in $\C$ are closed (i.e., the analogous property to  {\em
causal simplicity} of classical spacetimes  holds) 
and $\mathcal{P}_{((t_0,x),y)}\neq \emptyset$.

 Moreover,  the latter property (resp. the former one) is fulfilled  if $Z$ is upper bounded (resp. lower bounded) by any $t$-independent 
Finsler metric (resp. complete Finsler metric) on $\ker(dt)$. 

\item[(b)] All the  trajectories which are critical for the $AT$ functional must be cone geodesics.

\end{itemize}
\end{cor}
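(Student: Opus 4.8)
The plan is to make the whole statement hinge on a single monotonicity principle for the lift ODE $\dot t(s)=Z(t(s),\dot\beta(s))$ that (via \eqref{time-dependentcase}) defines the lightlike lift $\tilde\beta=(t(\cdot),\beta(\cdot))$ of a spatial track $\beta$, and then to feed the causal machinery of the previous sections into it. First I would record the \emph{comparison lemma}: if $\beta$ is fixed and $t_c(\cdot)$ is the $t$-component of \emph{any} causal lift of $\beta$ starting at the same height $t_0$, then $t(s)\le t_c(s)$ for all $s$, the inequality being strict for $s>a$ as soon as the causal lift is timelike on a set of positive measure. This is an elementary ODE comparison: the two functions agree at $s=a$, and at any putative first crossing $s^\ast$ one has $\dot t(s^\ast)=Z(t(s^\ast),\dot\beta(s^\ast))=Z(t_c(s^\ast),\dot\beta(s^\ast))\le\dot t_c(s^\ast)$ by causality of the second lift, which forbids $t$ from overtaking $t_c$; timelikeness sharpens $\le$ to a strict gap. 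Consequently $AT(\beta)=t(b)-t_0$ is the arrival height of the \emph{first}-arriving lift of a given spatial track, and it drops strictly under any timelike perturbation of the track.

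Granting this, the opening assertion follows by contradiction. If a local minimum $\beta$, whose lift reaches $(t_1,y)$, were not a cone geodesic, then by Th. \ref{t_CONE} its lift is not a lightlike pregeodesic for a compatible $L$ (such an $L$ exists by Cor. \ref{t_PRINNCIPAL}), so Lemma \ref{lem:closetimelike} produces a timelike curve $\sigma$ from $(t_0,x)$ to $(t_1,y)$ arbitrarily $C^0$-close to $\tilde\beta$; projecting $\sigma$ to a spatial track $\beta'$ near $\beta$ and relifting, the comparison lemma gives $AT(\beta')<t_1-t_0=AT(\beta)$, contradicting local minimality. The conjugate-point clause is handled identically: by the maximization results of \S\ref{s_61} (Prop. \ref{minimizeconic} together with the fact, quoted there, that a lightlike geodesic ceases to maximize the Finslerian separation strictly past its first conjugate point), a cone geodesic with an interior conjugate point would again admit a nearby timelike connection of its endpoints, hence a strictly earlier arrival; so a minimizer can carry no conjugate point before its endpoint.

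For part (a), nonemptiness of $\mathcal P_{((t_0,x),y)}$ together with closedness of $J^+(t_0,x)$ makes $\{t:(t,y)\in J^+(t_0,x)\}$ a nonempty closed subset of $\R$ bounded below by $t_0$ (causal curves are $t$-nondecreasing since $Z\ge0$), hence it attains a minimum $t_{\min}$; by the comparison lemma the lift of the spatial projection of a causal curve realizing $(t_{\min},y)$ has $AT=t_{\min}-t_0$ and is the global minimizer, a cone geodesic by the first part. For the sufficient conditions I would argue: if $Z\le Z_0$ for a $t$-independent Finsler metric $Z_0$, then $\dot t\le Z_0(\dot\beta)$, so the lift of \emph{every} spatial track (these exist as $S$ is connected) reaches $y$ in finite height, whence $\mathcal P_{((t_0,x),y)}\ne\emptyset$; and if $Z\ge Z_1$ for a \emph{complete} $t$-independent $Z_1$, then along any causal curve the $Z_1$-length of the spatial projection is $\le\int\dot t=t_1-t_0$, confining that projection to a fixed $Z_1$-ball, compact by Hopf--Rinow. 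This uniform confinement lets a limit-curve argument (valid for cone structures, using the causality theory set up in \S\ref{s2}) extract, from any $q_n\to q$ with $q_n\in J^+(t_0,x)$, a limiting causal curve to $q$, proving $J^+(t_0,x)$ closed.

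Part (b) is where I expect the main obstacle, since criticality is only first order whereas the shortcut argument above is genuinely second order (it exploits \emph{minimality}). Here I would instead compute the first variation of \eqref{time-dependentcase} directly: differentiating the constraint $\dot t=Z(t,\dot\beta)$ along a fixed-endpoint variation $\beta_\tau$ and integrating, the variation of $t(b)$ reduces, after the Gauss-type manipulations of \S\ref{s_61}, to the integrated lightlike-pregeodesic operator of the triple $(dt,\partial_t,Z)$ contracted with the variation field; its vanishing for all admissible variations is thus equivalent to $\tilde\beta$ solving the lightlike pregeodesic equation, that is, to $\tilde\beta$ being a cone geodesic by Th. \ref{t_CONE}. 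Equivalently, one invokes the Fermat principle for the compatible metric $L$ of Cor. \ref{t_PRINNCIPAL}, whose lightlike geodesics are exactly the arrival-time critical points along the observer line $\R\times\{y\}$, and closes the loop through Th. \ref{t_CONE}.
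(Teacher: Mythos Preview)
Your argument is correct and follows essentially the same route as the paper: the contradiction via Lemma~\ref{lem:closetimelike} and Th.~\ref{t_CONE} for the opening assertion, the closedness/nonemptiness argument with a limit-curve extraction for~(a), and Perlick's Fermat principle for~(b). Your explicit ODE comparison lemma for the lift $\dot t=Z(t,\dot\beta)$ is a helpful clarification of what the paper leaves implicit (it is what makes ``a neighborhood of $(T,y)$ could be joined by curves in $\mathcal P_{((t_0,x),y)}$'' precise), but it does not constitute a different strategy.
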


\begin{proof}
The first assertion holds because, if  the corresponding  minimum arrival point  $(T_,y)$ exists,  then the minimizer $\tilde \sigma$ must be a lightlike pregeodesic for any compatible Lorentz-Finsler metric with no conjugate points. 
Otherwise, by  Lemma  \ref{lem:closetimelike}, a connecting timelike curve would exist and, thus, a neighborhood of $(T_,y)$ could be joined by curves in $\mathcal{P}_{((t_0,x),y)}$.
Therefore, the result is a direct consequence of Th. \ref{t_CONE} and the discussion above Def. \ref{d_focal}. 

 For (a), the first assumption  ensures that $J^+(t_0,x)\cap (\R\times\{y\})$ is closed (with the component $\R$ lower bounded), and the second one that this intersection is not empty; so, the minimum for the reached $\R$-component yields the result. Notice also that the  upper boundedness of $Z$ implies that any curve in $S$ joining $x$ and $y$ can be lifted to a  causal  curve in $\R\times S$ from  $(t_0,x)$ to   $\R\times\{y\}$; so, $\mathcal{P}_{((t_0,x),y)}$ is not empty.  Moreover, the lower boundedness by some complete Finslerian metric $Z_0$ implies that  $J^+(t_0,x) \cap ([t_0,t_1]\times S)$ lies in a compact subset for any $t_1>t_0$. Then, for any converging sequence $\{(t_m,y_m)\}\rightarrow (t_1,y)$, with $ (t_m,y_m)\in J^+(t_0,x)$ for all $m$,  and  Zermelo curves $(t,\alpha_m(t))$, $t\in [t_0,t_m]$ from $(t_0,x)$ to $(t_m,y_m)$ the velocities $\dot\alpha_m$ are $Z_0$-bounded and Arzel\'a's theorem gives a Lipschitz limit curve $\tilde \alpha$ from $(t_0,x)$ to $(t_1,y)$. As in the standard Lorentzian case, the continuous  curve $\tilde \alpha$ is $\C$-causal in a natural sense (locally, its endpoints can be connected by a smooth causal curve) and, then, either $\tilde \alpha$ is a cone geodesic or  a timelike curve with the same endpoints as $\tilde \alpha$ exists\footnote{Indeed,  the lower boundedness by a complete $Z_0$  yields naturally the {\em global hyperbolicity} of any compatible $L$, being each slice $\{t\}\times S$ a Cauchy hypersurface (compare, for example, with the general result in \cite[Prop. 3.1]{San97})
 and, thus, the result is standard as in the Lorentzian case, where global hyperbolicity implies causal simplicity.}.

For (b), the Fermat relativistic principle
 developed by Perlick \cite{Perlick06} implies that the critical points for $AT$ correspond to the  lightlike pregeodesics for any Lorentz-Finsler metric compatible with $\C$ and, thus, they must be cone geodesics. 
\end{proof}
\begin{rem}
 Using orthogonality of cone geodesics to a submanifold, it is possible to solve the time-dependent  Zermelo problem in the case  of minimizing time when  one either departures from or arrives at  a smooth submanifold $P$. Indeed,  the solution in this case will be given by cone geodesics orthogonal to some $\{t_0\}\times P$  in an endpoint. 
\end{rem}
Obviously, the result   also holds in the time-independent case; the reader can check then that the continuous Finsler metric associated with $(dt,\partial_t,Z)$ is static and easily smoothable.

 Finally, it is also worth pointing out  the role of the maximum speeds $v_m$ in a relativistic setting.  The  setting of Zermelo's problem is  non-relativistic, however,  an obvious
relativistic interpretation arises
 once the cone structure is fixed and one thinks in $v_m$ as the maximum possible velocity measured by any observer  (at each event and direction)  for any particle, i.e., the (relativistic) {\em speed of light}. Under this viewpoint, the framework of the triple $(dt,\partial_t,F)$ (even in the case  $\R\times S \equiv \R^4$) can be useful to describe  either  possible anisotropies in the velocity of the light, or variations in its speed, a topic studied by quite a few authors \cite{Kos11,KosRus12,Rus15}.
 
\begin{rem}  Observe that our time-dependent metrics $Z:\R\times TS\rightarrow \R$ (for Zermelo problem) and the time computed with them (recall \eqref{time-dependentcase}) provide a rheonomic Lagrangian of Finsler type. Indeed, in \cite{Mark17}, S. Markvorsen studies the time-dependent Zermelo problem using rheonomic geometry as developed by M. Anastasiei et al. (see \cite[\S 7]{Anas94} and references therein,  and \cite{Bu10} for a different approach). As a consequence, a rheonomic Finsler-Lagrange geometry can be studied using lightlike geodesics of a Finsler spacetime.  In particular,  we can apply Cor. \ref{c_Zermelo} to obtain connecting results in a rheonomic Finsler-Lagrange geometry. 
\end{rem}
\subsubsection{Wind Finslerian structures}\label{s_632} As commented above, the metric  $Z$ in the classical Zermelo problem is a Randers metric for some pair $(g_0,W)$ with $g_0(W,W)<1$. However, Zermelo problem makes sense without this restriction. This general problem has been studied systematically in \cite{CJS14}, where the following results have been proved: 

(i) There exists a notion of {\em wind Riemannian structure}, which is a (seemingly singular) Randers-type metric where the pointwise $0$ vector do not lie inside the indicatrix. 

(ii) The geometry of such  structures, including their geodesics, is fully controlled by the cone structure $\C$ of  a  conformal class of  spacetimes, the {\em SSTK} ones. These  spacetimes (which are not by any means singular) admit a non-vanishing Killing vector field $K$ and, thus, they generalize  standard stationary spacetimes, where such a $K$ exists and  must also be   timelike.  

(iii) Zermelo problem can be described    
and solved by using the viewpoint of the SSTK spacetime. Moreover, the correspondence between both types of geometries yields quite a few interesting consequences including, for example,  a full understanding of the completeness of the Randers manifolds with constant flag curvature \cite{JS17} (classified in a celebrated paper by Bao, Robles and  Shen \cite{BCS04}). 

\smallskip

Given any Finsler metric and vector field $W$, one can consider analogously {\em wind Finslerian structures} (also defined in \cite{CJS14}), which is a much more general class of Finsler-type metrics generalizing  Randers-type ones. As in the case of wind Riemannian structures, wind Finslerian ones can also be  controlled by a cone structure $\C$. Moreover, such a $\C$ becomes invariant by the flow of  a non-vanishing vector field $K$, as in the case of SSTK spacetimes.  Again, there is a full correspondence  between the cone geodesics for $\C$ and the pregeodesics for the wind Finslerian structure.  Then, our study of cone geodesics here, including the compatibility with a Lorentz-Finsler metric $L$ (and the independence of the chosen $L$), becomes  sufficient elements to transplantate directly the results for wind Riemannian structures  in \cite{CJS14}  to the general wind Finslerian setting.

\appendix
  \section{Alternative definitions of Finsler spacetimes} \label{s_a1}
In the literature, there are several non-equivalent notions of Finsler spacetimes.  Next, we are going to  compare some of them which are related to ours. 
   Let us emphasize that there are quite a few of cosmological models using Lorentz-Finsler spacetimes from different viewpoints
  (see for example the review \cite{Vacaru} 
or  the more  recent \cite{PBPSS17})
which will not be considered specifically here. However,  as in the considered cases, our approach might be useful to understand their  global causal behavior. 
  \subsection{Beem's definition}  In this definition \cite{Beem70}, it is considered a pseudo-Finsler metric $L:TM\rightarrow \R$ with fundamental tensor having index $n-1$, where $n$ is the dimension of $M$. In this case, the restriction of $L$ to a connected  component  of $L^{-1}(-\infty,  0)  \subset TM\setminus \bf 0$  admitting a vector field $T$,   provides  a Finsler spacetime as introduced in \S \ref{s3.2}. In some cases, it is also required the metric $L$ to be reversible, namely, $L(-v)=L(v)$, for  every $v\in TM$. Otherwise,  whenever $L$ has two causal cones  (i.e., $L^{-1}(-\infty,  0)$ has two connected components as above),  one could choose such cones, as  the past and future cones; however, the absence of reversibility implies that the causal future and past would be unrelated. This makes   reasonable to focus  only on  one connected component of $L^{-1}(-\infty,0]\subset TM\setminus \bf 0$,  as in the present article. 
 Moreover,   
  taking into account the examples provided in \S \ref{s4}, which are not necessarily defined in the whole tangent bundle (when we consider a conic Finsler metric) or could be degenerate away from the causal cone,  the definition considered here focuses in the intrinsic properties of the, in principle, relevant part of the metric. Anyway, 
  it is interesting from the theoretical viewpoint that 
 any Lorentz-Finsler metric defined in a cone structure can be extended  to the whole tangent bundle as in Beem's definition, \cite{Min16} (not only the  connection as in Rem. \ref{globalCon}). 
 
 
 \subsection{Asanov's definition} 
 From  a more general viewpoint, this definition \cite{As85} does not consider as admissible those vectors in the boundary  of the causal cone. Namely, there is only a pseudo-Finsler metric $L:A\subset TM\rightarrow (0,+\infty)$ defined in a conic open subset which is convex and it has index $n-1$  (this possibility is also permitted in our definition of Lorentz-Minkowski norm when it is not proper, Def. \ref{d_LMnorm}, even though we have focused in the proper case). 
 Notice, however, that even if $L$ is not extendible to the boundary, the cone structure $\C$  obtained from the boundary of $A$  will make sense and, thus, our intrinsic study  of  $\C$  becomes  aplicable. 

 \subsection{Laemmerzahl-Perlick-Hasse's definition} In this definition \cite{LPH12}, the metric $L$ can be non-smooth in a set of measure zero, but the geodesic equation can be continuously extended to all the directions.  Remarkably, this definition includes simple  continuous Lorentz-Finsler metrics as the product of a Finsler metric by $(\R,-dt^2)$,  the metric  $G$ in \eqref{e_contLF} or the static ones in \cite{CaStan16}.  
In this framework, our study shows  (recall Rem. \ref{r_LAST}):  
(i)  there is a natural smooth cone structure $\C$ (according to our definition), whenever there exists an open  neighborhood  of the set of  all the lightlike directions where  $L$ is smooth and its indicatrix  is strongly convex 
\footnote{ This is not only applicable to \cite{LPH12}, but it also  recovers the definition of Finsler spacetime in \cite{AaJa16}}, 
  and (ii) $\C$ is not only compatible with some Lorentz-Finsler metric $L^*$ (according to our definition), but $L^*$ can  also be constructed as close to $L$ as desired  (by smoothing $L$ 
explicitly as in Th. \ref{t_principal}
and Lemma \ref{l_PRINC}). 
 Therefore, our study may clarify when the non-smoothability of $L$ is just a mathematical simplification of the model or when it is something inherent to it. Indeed, 
there  are other examples (as those in \cite{Min17}),  which are not smooth in the lightlike directions; thus,  even if geodesics can be defined there,  other fundamental quantities as flag curvature  (which would be  very relevant to study gravity)  are not available.
 \subsection{Kostelecky's definition} This definition, or better, examples, arises from effective models of the Standard-Model Extension \cite{Kos11} and it has a  strong  physical motivation and interest (see further developments in \cite{EdwKos18,KosRus12,Rus15}).
  The expression of the first examples of this kind presented in \cite{Kos11} is given by 
 \begin{equation*}
 L(v)=(\sqrt{-\tilde g(v,v)}+\tilde g(v,\av)+\varepsilon \sqrt{\tilde g(v,\bv)^2-\tilde g(\bv,\bv) \tilde g(v,v)})^2,
 \end{equation*}
 where $v$ belongs to the causal cone of  a classical Lorentzian metric $\tilde g$ (with index $1$), $\varepsilon^2=1$  and $\av$ and $\bv$ are two vector fields.   First of all, observe that if $\bv$ is timelike, then $\tilde g(v,\bv)^2-\tilde g(\bv,\bv) \tilde g(v,v)>0$ for all $v$ causal (Cauchy-Schwarz reverse inequality) and trivially, $\tilde g(v,\bv)^2-\tilde g(\bv,\bv) \tilde g(v,v)\geq 0$ if $\bv$ is non-timelike. Therefore, $L$ is well-defined in the subset of $\tilde g$-causal vectors. 
 Let us denote $F(v)=\sqrt{-\tilde g(v,v)}+\tilde g(v,\av)+\varepsilon\sqrt{p^\bv (v,v)}$,
where 
\[p^\bv(u,w)=\tilde g(u,\bv)\tilde g(w,\bv)-\tilde g(\bv,\bv) \tilde g(u,w),\]
for all $u,w\in TM$.
 We will study $F$ as a Randers-type modification of the case $\av=0$, so let   $\hat F(v)=\sqrt{-\tilde g(v,v)}+\varepsilon\sqrt{p^{\bv}(v,v)}$ (which can be non-positive).  In  order to check whether the fundamental tensor has index $n-1$,  Prop. \ref{indexn-1} will be applied. 
 From \eqref{eq:angularm}, the angular metric $\hat h_v$ (of $\hat L=\hat F^2$) for a $\tilde g$-timelike vector $v$ is given by
 \begin{multline*}
 \hat h_v(u,u)=-\frac{\hat F(v)}{\sqrt{-\tilde g(v,v)}} \tilde h_v(u,u)
 	+\varepsilon\frac{\hat F(v)}{\sqrt{p^\bv(v,v)}}\left(p^\bv(u,u)-\frac{p^\bv(v,u)^2}{p^\bv(v,v)}\right),
 \end{multline*}
 where $\tilde h_v$ is the angular metric of $\tilde g$.
 Observe that if $u\in \langle v\rangle^{\perp_{\tilde g}}\cap  \langle \bv\rangle^{\perp_{\tilde g}}$, then
 \[\hat h_v(u,u)=-\hat F(v)\left(\frac{1}{\sqrt{-\tilde g(v,v)}}+\varepsilon \frac{\tilde g(\bv,\bv)}{\sqrt{p^\bv(v,v)}} \right)\tilde g(u,u),\]
$\hat h_v(\bv,u)=0$ and $\hat h_v(\bv,\bv)=\frac{\hat F(v)}{\tilde g(v,v)\sqrt{-\tilde g(v,v)}}p^\bv(v,v)$. Putting all this together and recalling that $v$ is in the radical of $\hat h_v$, it follows that $\hat h_v$ is negative semi-definite with radical generated by $v$ if and only if 
 \begin{equation*}
 \hat F(v)>0\quad \text{and} \quad \frac{1}{\sqrt{-\tilde g(v,v)}}+\varepsilon \frac{\tilde g(\bv,\bv)}{\sqrt{p^\bv(v,v)}} >0,
 \end{equation*}
 which is equivalent to 
\begin{equation}\label{eq:kost}
-\tilde g(v,v)+\varepsilon p^\bv (v,v)>0\,\, \text{and}\,
\begin{cases}
\text{either}\quad \varepsilon \tilde g(\bv,\bv)\geq 0\\
\text{or}\, \varepsilon \tilde g(\bv,\bv)< 0 \text{ and } p^\bv(v,v)+\tilde g(v,v)\tilde g(\bv,\bv)^2>0.
\end{cases}
\end{equation}
Moreover,  $\hat h_v$ is positive semi-definite with radical generated by $v$ if and only if 
\begin{equation*}
\hat F(v)<0\quad \text{and} \quad \frac{1}{\sqrt{-\tilde g(v,v)}}+\varepsilon \frac{\tilde g(\bv,\bv)}{\sqrt{p^\bv(v,v)}} <0,
\end{equation*}
which is equivalent to
\begin{equation}\label{eq:kost2}
\varepsilon=-1,\,-\tilde g(v,v)-p^\bv (v,v)<0,\, \tilde g(\bv,\bv)> 0 \text{  and   } p^\bv(v,v)+\tilde g(v,v)\tilde g(\bv,\bv)^2<0.
\end{equation}
In the other cases, $\hat h_v$ is indefinite. This implies that when $\av=0$, the fundamental tensor of $L$ has index $n-1$ if and only if \eqref{eq:kost} holds (recall Prop. \ref{indexn-1}).
    Let us study the general case with $\av\not=0$. By a direct computation, (see also  \cite[Cor. 4.17]{JavSan11}):
 \begin{equation*}
 g_v(w,w)=\frac{F(v)}{\hat F(v)}\hat h_v(w,w)+\left(\frac{\hat g_v(v,w)}{\hat F(v)}+\tilde g(w,\av)\right)^2.
 \end{equation*}
  Then its angular metric is given by 
 \begin{equation*}
 h_v(w,w)= \frac{F(v)}{\hat F(v)}\hat h_v(w,w).
 \end{equation*}
  By Prop. \ref{indexn-1},  $g_v$ has index $n-1$ if 
   $F(v)>0$ and either  \eqref{eq:kost} 
 or \eqref{eq:kost2} hold, and trivially, $g_v$ is degenerate if $F(v)=0$. Moreover, $g_v$ is not defined when $\tilde g(v,v)=0$, as $L$ is not smooth there.  Therefore, considering a connected region such that $F(v)\geq 0$,  the fundamental tensor $g_v$ has index $n-1$ for $v$ in the interior satisfying \eqref{eq:kost} or \eqref{eq:kost2}, and it is degenerate for $v$  in the boundary of the region;   what is more,  the region $\{v\in TM: \text{$F(v)>0$ and $\tilde g(v,v)<0$}\}$ is not empty if and only if there exists a $p\in M$ such that one of the intersections $\omega_\av^{-1}(-1)\cap  \hat {\mathfrak{B}}^+_p$ or $\omega_\av^{-1}(1)\cap  \hat {\mathfrak{B}}^+_p$ are non empty, where 
 $\omega_\av(v)=\tilde g(v,\av)$ for $v\in TM$ and 
 \begin{align*}
 \hat {\mathfrak{B}}^+_p&=\{v\in T_pM:\hat F(v)\geq  1, \text{$v$ satisfies \eqref{eq:kost}}\},\\
  \hat {\mathfrak{B}}^-_p&=\{v\in T_pM:\hat F(v)\leq  -1, \text{$v$ satisfies \eqref{eq:kost2}}\}
  \end{align*}
   (compare with Th. \ref{t_examp} and Cor.~\ref{c_examp}). 
 
  Observe that there are  three  possibilities at every point $p\in M$:
 \begin{enumerate}
 \item[(i)] The intersection  $\omega_\av^{-1}(\mp 1)\cap  \hat {\mathfrak{B}}^\pm_p$ has  empty interior, and then $L$ is not of Lorentz type at any point in the causal cone of $\tilde g$.
 \item[(ii)]  The intersection  $\omega_\av^{-1}(\mp 1)\cap  \hat {\mathfrak{B}}^\pm_p$ is  compact and (i) does not hold.  Then $L$ determines a cone structure if $\omega_\av^{-1}(\mp 1)\cap  \hat {\mathfrak{B}}^\pm_p$ does not touch the lightlike cone of $\tilde g$ and it has index $n-1$ when $F>0$, but not in the boundary.  This means that the causality of such metrics can be studied with our approach. 
 \item[(iii)] The intersection  $\omega_\av^{-1}(\mp 1)\cap  \hat {\mathfrak{B}}^\pm_p$ is non-compact and (i) does not hold. The region where $F>0$ does not determine a cone structure (the boundary is not smooth), but the fundamental tensor of $L$ has index $n-1$ there.
 \end{enumerate}

  \subsection{Pfeifer-Wohlfarth's definition} The main feature of this definition \cite{PW11} is that the Lorentz-Finsler $L$ does not necessarily extend smoothly to  the lightcone  (or the extension has not Lorentzian index there) but  such a property holds  for some power of $L$. Thus,  the Finslerian connections determined by $L$ are not defined in the lightlike directions; however, these authors show that it is possible to extend the connection defined in the timelike directions to the lightlike cone.  Our viewpoint on cone structures is applicable here
  as it can be proved that the Pfeifer-Wohlfarth's lightlike cone is a cone structure in our sense (proceed  as in part $(iv)$ of Prop.~\ref{propiedades}  with a $p$-homogeneous Lagrangian  which has  Lorentzian fundamental tensor).  
  
   Observe that one of the main examples of Pfeifer-Wohlfarth's definition,  namely, the bimetrics, can be generalized using the examples given in \S \ref{s4}. More precisely, if $L_1:A_1\rightarrow[0,+\infty)$ and $L_2:A_2\rightarrow [0,+\infty)$ are two Lorentz-Finsler metrics (according to our definition), then $L:A_1\cap A_2\rightarrow [0,+\infty)$ defined as $L(v)=\sqrt{L_1(v)L_2(v)}$ has fundamental tensor with Lorentzian index on the whole $A_1\cap A_2$ (with independence of its behaviour in the boundary). Indeed, its fundamental tensor is given by 
\begin{multline*}
g_v(u,w)=\frac{1}{2L(v)}(p_v(u,w)-\frac{1}{L(v)^2}p_v(v,u)p_v(v,w)) \\
+\frac{1}{L(v)}(g_v^1(v,u)g_v^2(v,w)+g_v^1(v,w)g_v^2(v,u)) 
\end{multline*}
where $g_v^1$ and $g_v^2$ are, respectively, the fundamental tensors of $L_1$ and $L_2$ and now, $p_v(u,w)=g_v^1(u,w)L_2(v)+g_v^2(u,w)L_1(v)$ for $u,w\in T_{\pi(v)}M$ and $v\in A_1\cap A_2$. Moreover, the angular metric of $L$ is given by 
\begin{multline*}
h_v(u,u)=\frac{1}{2L(v)}(p_v(u,u)-\frac{3}{2 L(v)^2}p_v(v,u)^2)+\frac{2}{L(v)}g_v^1(v,u)g_v^2(v,u).
\end{multline*}
Observe that if $u\in \langle v\rangle^{\perp g^1_v}$, then
\begin{align*}
h_v(u,u)&=\frac{1}{2L(v)}(L_1(v) g_v^2(u,u)+L_2(v) g^1_v(u,u)-\frac{3L_1(v)}{L_2(v)} g_v^2(v,u)^{2})\\
&= \frac{1}{2L(v)}( L_1(v) h_v^2(u,u)+L_2(v) h_v^1(u,u)-\frac{L_1(v)}{L_2(v)} g_v^2(v,u)^{2}),
\end{align*}
where $h_v^1$ and $h_v^2$ are the angular metrics of $L_1$ and $L_2$. Applying Prop.~\ref{indexn-1} to $L_1$ and $L_2$, we deduce that $h_v$ is negative semi-definite with radical generated by $v$, which, again by Prop. \ref{indexn-1}, implies that $g_v$ has index $n-1$ as required. If $\bar A_1\subset A_2$, then it is possible to show with similar techniques that the Hessian of $L^2$ is of Lorentzian type in the boundary of $A_1$ (observe that $L$ is not smooth there); so, this case lies under Pfeifer-Wohlfarth's definiton and yields a cone structure according to our definition, as claimed above. Finally, if $A_1=A_2$, then $L$ is a  Lorentz-Finsler metric according to our definition, because if $v$ is in the boundary of $A_1=A_2$, then
\[g_v(v,u)=\frac{1}{2}\left(\sqrt{\frac{L_2(v)}{L_1(v)}}g_v^1(v,u)+\sqrt{\frac{L_1(v)}{L_2(v)}}g_v^2(v,u)\right),\]
and then Th. \ref{t_anisotropic} and Prop. \ref{indexn-1} conclude.

A further example that can be generalized is the one provided by Bogoslovsky (see \cite{Bogos07} and references therein), which turned out to be a model for very special relativity \cite{GGP07} and it has been considered recently to model pp-waves \cite{FP16}. As it was observed in \cite{FPP18}, the Bogoslovsky metric  satisfies the conditions in Pfeifer-Wohlfarth's definition. Consider now the Lagrangian $L(v)=L_0(v)^{(1+b)}/\beta(v)^{2b}$, where $L_0:A\rightarrow [0,+\infty)$ is a Lorentz-Finsler metric and $\beta$ a one-form such that $\bar A\cap \ker \beta=\bf 0$. Then if $F_0=\sqrt{L_0}$, following \cite[Cor. 4.12]{JavSan11}, we obtain the fundamental tensor of $L:A\rightarrow [0,+\infty)$ as
\begin{multline*}
\frac{L_0(v)}{L(v)}g_v(u,u)=(b+1) h_v^0(u,u)\\
+b(b+1)\left(\frac{g^0_v(v,u)}{F_0(v)}-\frac{F_0(v)}{\beta(v)}\beta(u)\right)^2\\+\left((b+1)\frac{g^0_v(v,u)}{F_0(v)}-b\frac{F_0(v)}{\beta(v)}\beta(u)\right)^2,
\end{multline*}
where $g_v^0$ is the fundamental tensor of $L_0$ and the angular metric of $L$  is 
\[\frac{L_0(v)}{L(v)}h_v(u,u)=(b+1)h^0_v(u,u)+b(b+1)\left(\frac{g^0_v(v,u)}{F_0(v)}-\frac{F_0(v)}{\beta(v)}\beta(u)\right)^2,\]
which is negative semi-definite with radical generated by $v$, whenever $-1<b<0$. The Lagrangian $L$ is not necessarily smooth in the boundary of $A$, but so is the power $L^{\frac{1}{1+b}}$. It is possible to show that the Hessian of this power is of Lorentzian type in the lightcone of $L$ and then $L$ defines a Pfeifer-Wohlfarth's Finsler spacetime.

\section{Lorentz-Minkowski norms}\label{s_a2}

For  a classical norm on a vector space $V$ (eventually conic or non-reversible)  there is a well-known relation between the convexity of its indicatrix, the triangle inequality and the 
fundamental inequality  for its fundamental tensor (see \cite{JavSan11} for a  summary). 
These relations are easily transplanted to Lorentz-Minkowski norms, namely: 

\begin{prop}\label{LorentzTriangle}
Let  $A\subset V$ be a conic salient domain, $F:A\rightarrow  \R$, $F>0$, a continuous   two-homogeneous positive function,   $\Sigma=F^{-1}(1)$ be its indicatrix,   $B:=F^{-1}([1,+\infty))$ and, in the case that $F$ is smooth, with  fundamental tensor $g$ as in \eqref{fundtens}. Then:
\begin{enumerate}
\item $F$ satisfies the reverse triangle inequality 
\begin{equation}\label{EqRevTI}
F(v+w)\geq F(v)+F(w)\qquad\forall v,w\in A
\end{equation}
if and only if $B$ is convex. 

When $F$ is smooth, this is equivalent to (i) the positive semi-definiteness of the second fundamental form $\sigma^\xi$ of $\Sigma$ with respect to the position vector $\xi$ and (ii) the negative semi-definitess of $g$ on $\Sigma$.   
Moreover, in this case $g$ satisfies the  non-strict, reverse fundamental inequality, 
\begin{equation}\label{RCS}
g_v(v,w)\geq F(v)F(w) \qquad \forall v,w\in A.
\end{equation}

\item $F$ satisfies the strict reverse triangle inequality (i.e., \eqref{EqRevTI} holds with equality   only when $v, w$ are collinear) if and only if $B$ is strictly convex (i.e., each open segment with endpoints $v,w\in B$ is included in the interior of $B$ except at most $v,w$). 

When $F$ is smooth, this is equivalent to the strict convexity of $\Sigma$ with respect to $B$ (i.e. the hyperplane tangent to $\Sigma$ at any point only touches $B$ at that point). 

This property holds when $g$ is non-degenerate with index $n-1$  (in particular, when $F$ is a Lorentz-Minkowski norm). In that case, the {\em (strict) reverse fundamental inequality} holds (i.e., \eqref{RCS} holds with equality  if and only if $v,w$ are collinear); such an inequality
 becomes the classical {\em reverse Cauchy-Schwarz} one when $F$ comes from a Lorentzian scalar product.

\end{enumerate}
\end{prop}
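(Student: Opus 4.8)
The plan is to transplant, reversing every inequality, the classical chain relating the triangle inequality, convexity of the unit ball, infinitesimal convexity of the indicatrix and the fundamental inequality for an ordinary Minkowski norm (see \cite{JavSan11}); here the superlevel set $B=\{F\ge 1\}$ plays the role of the unit ball and each convexity statement is reversed because $F$ is to be \emph{superadditive} rather than subadditive.

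First I would prove the core equivalence of part (1) by homogeneity alone. If $B$ is convex, take $v,w\in A$ and set $a:=F(v)$, $b:=F(w)$; since $v/a,\,w/b\in\Sigma\subset B$, the convex combination $\tfrac{a}{a+b}(v/a)+\tfrac{b}{a+b}(w/b)=\tfrac{v+w}{a+b}$ lies in $B\subset A$, so by conicity $v+w\in A$ and $F(v+w)=(a+b)F(\tfrac{v+w}{a+b})\ge a+b$, which is \eqref{EqRevTI}. Conversely \eqref{EqRevTI}, which already forces $A$ to be closed under sums and hence a convex cone, gives for $v,w\in B$ and $\lambda\in[0,1]$ that $F(\lambda v+(1-\lambda)w)\ge\lambda F(v)+(1-\lambda)F(w)\ge 1$ by homogeneity, so $\lambda v+(1-\lambda)w\in B$ and $B$ is convex.

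For the smooth reformulation I would first observe that $\partial B=\Sigma$ and that the position vector $\xi=v$ points \emph{into} $B$ (because $F$ grows along rays), so that, in contrast with the difficulty noted for $A_0$ in Rem.~\ref{r2.2}(b), here $\bar B$ avoids the origin and is a complete manifold with smooth boundary $\Sigma$; the boundary/interior convexity principle then gives directly that $B$ is convex if and only if $\sigma^\xi\ge 0$, i.e. condition~(i). Passing from (i) to (ii) I would use the purely metric identity behind \eqref{secfundform}, valid for any smooth $L=F^2$, namely $g_v(X,X)=-\tfrac12\sigma^v(X,X)\,v(L)$ for $X\in T_v\Sigma$, together with $v(L)=2L(v)=2>0$ on $\Sigma$; this makes (i) and (ii) equivalent, so ``$B$ convex'' is equivalent to (i) and (ii) jointly. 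The reverse fundamental inequality \eqref{RCS} would then follow from Lemma~\ref{l(i)}: since $g_v(v,w)=\tfrac12 dL_v(w)=F(v)\,dF_v(w)$ and the concave (superadditive) $F$ satisfies the support inequality $F(w)\le F(v)+dF_v(w-v)=dF_v(w)$ via Euler's relation $dF_v(v)=F(v)$, one obtains $g_v(v,w)\ge F(v)F(w)$.

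Part (2) is the same argument read with strict inequalities. Tracking equality in the homogeneity computation, $F(v+w)=F(v)+F(w)$ forces $\tfrac{v+w}{a+b}$ to be an interior point of the segment $[v/a,\,w/b]$ lying on $\partial B$, which is impossible under strict convexity unless $v/a=w/b$, i.e. $v,w$ collinear; in the smooth case this is strict convexity of $\Sigma$ with respect to $B$, equivalently $\sigma^v$ positive definite, equivalently (again by \eqref{secfundform}) $g$ negative definite on $T_v\Sigma$. Here I would reprove nothing when $g$ has index $n-1$: Prop.~\ref{propiedades}(i)--(ii) already yields that $g_v|_{T_v\Sigma}$ is negative definite and that $\Sigma$ is strongly convex, so the strict reverse triangle inequality holds, and equality in \eqref{RCS} reduces to $dF_v(w)=F(w)$, allowed by strict convexity only for $w$ in the ray of $v$; when $F^2$ comes from a Lorentzian scalar product this is the classical reverse Cauchy--Schwarz inequality. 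The step needing the most care is the global convexity argument in part~(1): applying the boundary principle to $B$ requires that $\bar B$ be a complete manifold with smooth boundary, which I would guarantee by controlling the behaviour of $F$ near $\partial A$ (automatic in the proper Lorentz-Minkowski case, where $\Sigma$ is asymptotic to the cone $\C_0$ and $B$ stays uniformly away from both the origin and $\partial A$).
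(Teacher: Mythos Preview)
Your proposal is correct and follows essentially the same route as the paper's proof: both reduce the convexity/triangle-inequality equivalences to the classical Finslerian arguments of \cite[Prop.~2.3]{JavSan11} with all inequalities reversed, and both derive the reverse fundamental inequality \eqref{RCS} by showing $dF_v(w)\ge F(w)$ via concavity of $F$ on $A$.

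The one genuine difference is how the concavity of $F$ is obtained. You deduce it directly from superadditivity plus one-homogeneity and then invoke the support-hyperplane inequality for concave functions. The paper instead computes $\mathrm{Hess}(F)_v=\tilde h_v=h_v/F(v)^3$ explicitly in terms of the angular metric $h_v$ of $L$, uses that $h_v\le 0$ (equivalent to condition~(ii)), and then applies the second-order mean-value theorem to get $F(v-u)\le F(v)-dF_v(u)$, from which \eqref{RCSbis} follows with $u=v-w$. Your route is a bit more economical for the non-strict inequality; the paper's explicit identification $\tilde h_v\propto h_v$ pays off in the strict case of part~(2), since Prop.~\ref{indexn-1} shows the radical of $h_v$ is exactly $\langle v\rangle$ when $g$ has index $n-1$, which immediately gives that equality in $dF_v(w)=F(w)$ forces $w\in\langle v\rangle$. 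Your argument ``allowed by strict convexity only for $w$ in the ray of $v$'' is correct but would benefit from making this Hessian link explicit. Your caveat about completeness of $\bar B$ when applying the boundary-convexity principle is well observed and matches the level of care in the paper, which likewise defers these points to \cite{JavSan11} and \cite{Morales}.
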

Some proofs of these assertions are spread in the literature (see for example \cite{AaJa16,Min15}) and a detailed development is carried out in \cite{Morales}; 
the latter is  sketched here for the convenience of the reader.

\smallskip

\begin{proof}[Proof of Prop. \ref{LorentzTriangle}]
All the assertions  follow by using  the Euclidean arguments in \cite[Prop. 2.3]{JavSan11} (see \cite[Sect. 3.2.1]{Morales} for details), except those involving the fundamental inequality \eqref{RCS}. The latter is equivalent to
\begin{equation}\label{RCSbis}
\df F_v(w) \geq F(w) \qquad \forall v,w\in A,
\end{equation}
and consider the non-trivial case when $v$, $w$ are not collinear.  Then if $\tilde h:=\textrm{Hess}(F)$, using that $F=\sqrt{L}$, it follows straightforwardly that 
\[\tilde h_v(u,w)=\frac{1}{F(v)^3} h_v(u,w),\]
for $v\in A$ and $u,w\in V$, where $h_v$ is the angular metric of $L$ (recall \eqref{angularmetric}). Therefore, recalling Prop. \ref{indexn-1}  
%
\begin{equation}\label{NegatDefinith}
h_v(u,u)\leq 0 \qquad \forall v\in A, \quad \forall u\in V. 
\end{equation}
 (where  the radical of $h_v$ may contain directions different to $v$  only if  
 $g$ is degenerate). 
If $v-u \in A$, the second mean-value theorem at $0$ yields 
\begin{equation*}
F(v-u)=F(v)-\df F_v(u)+\frac{1}{2}h_{v+\delta u}(u,u) \quad\textrm{for some $\delta\in (-1,0)$},
\end{equation*}
and using \eqref{NegatDefinith}, 
\begin{equation}\label{AuxiliaryIneq2}
F(v-u)\leq F(v)-\df F_v(u).
\end{equation}
So, \eqref{RCSbis} follows by putting $u:=v-w$ and recalling $\df F_v(v)=F(v)$. 
\end{proof}

\begin{rem}  It is worth pointing out the following relation between the Lorentz-Finsler and classical Finsler cases. Let $A$ be as above,  $L: A\rightarrow \R, L>0$
smooth and {\em $r$-homogeneous} ($L(\lambda v)=\lambda^{r}L(v)$ for some $r\neq 0$), and $\Sigma_a=L^{-1}(a)$, $a>0$. Then  Hess$(L)$ is negative definite (resp. 
  semi-definite) 
on one (and then all) $\Sigma_a$  if and only if
$\hess (1/L)$ is positive  definite (resp.  semi-definite) there\footnote{Now $\Sigma_a$ satisfies
$g_v=-a\cdot r\cdot \sigma_v^\xi$, 
as $g_v(u,v)=(r-1) dL_v(u),  g_v(v,v)=r(r-1)L(v)$.
}. As a consequence, when $r>1$:

 $\hess L$ {\em  has Lorentzian 
signature $(+,-,\dots ,-)$ (resp. coindex one)  if and only if} $\hess(1/L)$ {\em is positive definite (resp. semi-definite).}

Notice also that $\hess F$ can be written in terms of $\hess (1/F^2)$, which can be used alternatively to prove the reverse triangle inequality \cite[Prop. 8.7]{Morales}.
\end{rem}

\section*{Acknowledgments}
 The authors warmly acknowledge Professor Daniel Azagra (Universidad Complutense, Madrid) his advise on approximation of convex  functions as well as Profs. Kostelecky (Indiana University), Fuster (University of Technology, Eindhoven), Stavrinos (University of Athens), Pfeifer (University of Tartu), Perlick (University of Bremen) and  Makhmali (Institute of Mathematics, Warsaw) their comments on a preliminary version of the article.  The careful revision by the referee is also acknowledged. 
 
 This work is a result of the activity developed within the framework of the Programme in Support of Excellence Groups of the Regi\'on de Murcia, Spain, by Fundaci\'on S\'eneca, Science and Technology Agency of the Regi\'on de Murcia. MAJ  was partially supported by MINECO/FEDER project reference MTM2015-65430-P and Fundaci\'on S\'eneca project reference 19901/GERM/15, Spain and  MS by 
Spanish  MINECO/ERDF project reference MTM2016-78807-C2-1-P. 




\begin{thebibliography}{10}
	
	\bibitem{AaJa16}
	A.~B. Aazami and M.~A. Javaloyes.
	\newblock Penrose's singularity theorem in a {F}insler spacetime.
	\newblock {\em Classical Quantum Gravity}, 33(2):025003, 22, 2016.
	
   \bibitem{Anas94}
	 M. Anastasiei.
	\newblock On the geometry of time-dependent Lagrangians.
	\newblock {\em Mathematical and Computing Modelling}, Pergamon Press 20, 4/5, 67--81, 1994. 
	
	\bibitem{As85}
	G.~S. Asanov.
	\newblock {\em Finsler geometry, relativity and gauge theories}.
	\newblock Fundamental Theories of Physics. D. Reidel Publishing Co., Dordrecht,
	1985.
	
	\bibitem{Azagra}
	D.~Azagra.
	\newblock Global and fine approximation of convex functions.
	\newblock {\em Proc. Lond. Math. Soc. (3)}, 107(4):799--824, 2013.
	
	\bibitem{BaChSh00}
	D.~Bao, S.-S. Chern, and Z.~Shen.
	\newblock {\em {An Introduction to {R}iemann-{F}insler geometry}}.
	\newblock {Graduate Texts in Mathematics}. Springer-Verlag, New York, 2000.
	
	\bibitem{BCS04}
	D.~Bao, C.~Robles, and Z.~Shen.
	\newblock Zermelo navigation on {R}iemannian manifolds.
	\newblock {\em J. Differential Geom.}, 66(3):377--435, 2004.
	
	\bibitem{BCGS}
	R.~Bartolo, E.~Caponio, A.~V. Germinario, and M.~S\'anchez.
	\newblock Convex domains of {F}insler and {R}iemannian manifolds.
	\newblock {\em Calc. Var. Partial Differential Equations}, 40(3-4):335--356,
	2011.
	
	\bibitem{BGS}
	R.~Bartolo, A.~Germinario, and M.~S\'anchez.
	\newblock Convexity of domains of {R}iemannian manifolds.
	\newblock {\em Ann. Global Anal. Geom.}, 21(1):63--83, 2002.
	
	
	\bibitem{Beem70}
	J.~K. Beem.
	\newblock Indefinite {F}insler spaces and timelike spaces.
	\newblock {\em Canad. J. Math.}, 22:1035--1039, 1970.
	
	\bibitem{BEE}
	J.~K. Beem, P.~E. Ehrlich, and K.~L. Easley.
	\newblock {\em Global {L}orentzian geometry}, volume 202 of {\em Monographs and
		Textbooks in Pure and Applied Mathematics}.
	\newblock Marcel Dekker, Inc., New York, second edition, 1996.
	
	\bibitem{Suhr}
	P.~Bernard and S.~Suhr.
	\newblock Lyapounov {f}unctions of {c}losed {c}one {f}ields: {f}rom {C}onley
	{T}heory to {t}ime {f}unctions.
	\newblock {\em Comm. Math. Phys.}, 359(2):467--498, 2018.
	
	\bibitem{Bogos07}
	G.~{Bogoslovsky}.
	\newblock {Some physical displays of the space anisotropy relevant to the
		feasibility of its being detected at a laboratory}.
		\newblock {\em Physical  Interpretation  of  Relativity  Theory:  Proceedings  of  XIII  International  Meeting.    Moscow,  2-5  July  2007}. (Edited  by  M.C.  Duffy,    V.O.  Gladyshev,    A.N. Morozov, P. Rowlands.) Moscow: BMSTU, 2007. 
		Available at arXiv e-prints, 0706.2621.


	\bibitem{Bu10}
   	I. Bucataru and O. Constantinescu.
	\newblock Helmholtz conditions and symmetries for the time dependent case of the inverse problem of the calculus of variations.
	\newblock {\em J. Geom. Physics}, 60:1710--1725, 2010.

	
	\bibitem{CJS14}
	E.~{Caponio}, M.~A. {Javaloyes}, and M.~{S{\'a}nchez}.
	\newblock {Wind Finslerian structures: from Zermelo's navigation to the
		causality of spacetimes}.
	\newblock {\em ArXiv e-prints, 1407.5494}, 2014.
	
	\bibitem{CaStan16}
	E.~Caponio and G.~Stancarone.
	\newblock Standard static {F}insler spacetimes.
	\newblock {\em Int. J. Geom. Methods Mod. Phys.}, 13(4):1650040, 25, 2016.
	
	\bibitem{CaStan18}
	E.~{Caponio} and G.~{Stancarone}.
	\newblock {On Finsler spacetimes with a timelike Killing vector field}.
	\newblock {\em Classical and Quantum Gravity},   35(8):085007, 28, 2018. 
	
	
	\bibitem{EdwKos18}  B.~R.~Edwards and V.~A.~Kostelecký.
    \newblock{	Riemann-Finsler geometry and Lorentz-violating scalar fields. }
	\newblock {\em Phys. Lett. B } 786, 319--326, 2018. 
	
	\bibitem{FS12}
	A.~Fathi and A.~Siconolfi.
	\newblock On smooth time functions.
	\newblock {\em Math. Proc. Cambridge Philos. Soc.}, 152(2):303--339, 2012.
	
	\bibitem{FP16}
	A.~Fuster and C.~Pabst.
	\newblock Finsler {$pp$}-waves.
	\newblock {\em Phys. Rev. D}, 94(10):104072, 5, 2016.
	
	\bibitem{FPP18}
	A.~{Fuster}, C.~{Pabst}, and C.~{Pfeifer}.
	\newblock {Berwald spacetimes and very special relativity}.
	\newblock  {\em  	Phys. Rev. D   
	}, 98:084062, 2018. 
	
	\bibitem{Gh}
	M.~Ghomi.
	\newblock The problem of optimal smoothing for convex functions.
	\newblock {\em Proc. Amer. Math. Soc.}, 130(8):2255--2259, 2002.
	
	\bibitem{GGP07}
	G.~W. Gibbons, J.~Gomis, and C.~N. Pope.
	\newblock General very special relativity is {F}insler geometry.
	\newblock {\em Phys. Rev. D}, 76(8):081701, 5, 2007.
	
	\bibitem{Gib17}
	 G.~W. Gibbons.
	\newblock {A Spacetime Geometry picture of Forest Fire
Spreading and of Quantum Navigation}.
	\newblock {\em ArXiv e-prints, 	1708.02777}, 2017. 
	
	
	\bibitem{HP17}
	M.~Hohmann and C.~Pfeifer.
	\newblock Geodesics and the magnitude-redshift relation on cosmologically
	symmetric {F}insler spacetimes.
	\newblock {\em Phys. Rev. D}, 95:104021, 2017.
	
	\bibitem{Ishi81}
	H.~Ishikawa.
	\newblock Note on {F}inslerian relativity.
	\newblock {\em J. Math. Phys.}, 22(5):995--1004, 1981.
	
	\bibitem{Aniso16}
	M.~A. Javaloyes.
	\newblock {Anisotropic tensor calculus}.	
  \newblock {\em Int. J. Geom. Methods Mod. Phys.},  to appear,  doi.org/10.1142/S0219887819410019 (available online). 
	
	\bibitem{JavSan14}
	M.~A. Javaloyes and M.~S\'anchez.
	\newblock Finsler metrics and relativistic spacetimes.
	\newblock {\em Int. J. Geom. Methods Mod. Phys.}, 11(9):1460032, 15, 2014.
	
	\bibitem{JavSan11}
	M.~A. Javaloyes and M.~S\'anchez.
	\newblock On the definition and examples of {F}insler metrics.
	\newblock {\em Ann. Sc. Norm. Super. Pisa Cl. Sci. (5)}, 13(3):813--858, 2014.
	
	\bibitem{JS17}
	M.~A. Javaloyes and M.~S\'anchez.
	\newblock Wind {R}iemannian spaceforms and {R}anders-{K}ropina metrics of
	constant flag curvature.
	\newblock {\em Eur. J. Math.}, 3(4):1225--1244, 2017.
	
	\bibitem{JavSoa18}
	M.~A. {Javaloyes} and B.~{Soares}.
	\newblock {Anisotropic conformal invariance of lightlike geodesics in
		pseudo-{F}insler manifolds}.
	\newblock {\em preprint}, 2018.
	
	\bibitem{JV}
	 M.~A. {Javaloyes} and H.~{Vit{\'o}rio}.
	\newblock {Some properties of Zermelo navigation in pseudo-Finsler metrics
		under an arbitrary wind}.
	\newblock {\em Houston Journal of Mathematics}, 44 (4):1147--1179, 2018. 
	
	\bibitem{KN}
	S.~Kobayashi and K.~Nomizu.
	\newblock {\em Foundations of differential geometry. {V}ol. {II}}.
	\newblock Wiley Classics Library. John Wiley \& Sons, Inc., New York, 1996.
	\newblock Reprint of the 1969 original, A Wiley-Interscience Publication.
	
	\bibitem{Kokkendorff}
	S.~L. Kokkendorff.
	\newblock On the existence and construction of stably causal {L}orentzian
	metrics.
	\newblock {\em Differential Geom. Appl.}, 16(2):133--140, 2002.
	
	\bibitem{Kos11}
	V.~A. Kosteleck\'y.
	\newblock Riemann-{F}insler geometry and {L}orentz-violating kinematics.
	\newblock {\em Phys. Lett. B}, 701(1):137--143, 2011.
	
	\bibitem{KosRus12}
	V.~A. Kosteleck\'y, N.~Russell, and R.~Tso.
	\newblock Bipartite Riemann-Finsler geometry and Lorentz violation.
	\newblock {\em Phys. Lett. B}, 716(3-5):470--474.
	
	\bibitem{LPH12}
	C.~Laemmerzahl, V.~Perlick, and W.~Hasse.
	\newblock Observable effects in a class of spherically symmetric static Finsler
	spacetimes.
	\newblock {\em Phys. Rev. D}, 86(104042), 2012.
	
	\bibitem{Mak18}
 O. Makhmali.
	\newblock Differential geometric aspects of causal structures.
	\newblock {\em SIGMA Symmetry Integrability Geom. Methods Appl.} 14, Paper No. 080, 50 pp, 2018.

	
	\bibitem{Markv16}
	S.~Markvorsen.
	\newblock A {F}insler geodesic spray paradigm for wildfire spread modelling.
	\newblock {\em Nonlinear Anal. Real World Appl.}, 28:208--228, 2016.
	
	\bibitem{Mark17}
	 S.~Markvorsen.
	\newblock {Geodesic sprays and frozen metrics in rheonomic Lagrange manifolds}.
	\newblock {\em ArXiv e-prints, 	1708.07350}, 2017. 
	
	\bibitem{Matveev}
	V.~S. Matveev and M.~Troyanov.
	\newblock The {B}inet-{L}egendre metric in {F}insler geometry.
	\newblock {\em Geom. Topol.}, 16(4):2135--2170, 2012.
	
	\bibitem{Min15a}
	E.~Minguzzi.
	\newblock Convex neighborhoods for {L}ipschitz connections and sprays.
	\newblock {\em Monatsh. Math.}, 177(4):569--625, 2015.
	
	\bibitem{Min15}
	E.~Minguzzi.
	\newblock Light cones in {F}insler spacetime.
	\newblock {\em Comm. Math. Phys.}, 334(3):1529--1551, 2015.
	
	\bibitem{Min16}
	E.~Minguzzi.
	\newblock An equivalence of {F}inslerian relativistic theories.
	\newblock {\em Rep. Math. Phys.}, 77(1):45--55, 2016.
	
	\bibitem{Min17}
	E.~Minguzzi.
	\newblock Affine sphere relativity.
	\newblock {\em Comm. Math. Phys.}, 350(2):749--801, 2017.
	
	\bibitem{MinSan}
	E.~Minguzzi and M.~S\'anchez.
	\newblock The causal hierarchy of spacetimes.
	\newblock In {\em Recent developments in pseudo-{R}iemannian geometry}, ESI
	Lect. Math. Phys., pages 299--358. Eur. Math. Soc., Z\"urich, 2008.
	
	\bibitem{Morales}
	P.~Morales~\'Alvarez.
	\newblock {\em An introduction to Lorentz-Finsler Geometry, Master Thesis, U.
		Granada}.
	\newblock Master {\em FisyMat}, supervisors MA. Javaloyes \& M. S\'anchez.
	available at http://gigda.ugr.es/digap/tesis, 2015.
	
	\bibitem{ON}
	B.~O'Neill.
	\newblock {\em Semi-{R}iemannian geometry}, volume 103 of {\em Pure and Applied
		Mathematics}.
	\newblock Academic Press, Inc. [Harcourt Brace Jovanovich, Publishers], New
	York, 1983.
	
	\bibitem{PBPSS17}
	G.~Papagiannopoulos, S.~Basilakos, A.~Paliathanasis, S.~Savvidou, and P.~C.
	Stavrinos.
	\newblock Finsler-{R}anders cosmology: dynamical analysis and growth of matter
	perturbations.
	\newblock {\em Classical Quantum Gravity}, 34(22):225008, 20, 2017.
	
	\bibitem{Perlick06}
	V.~Perlick.
	\newblock Fermat principle in {F}insler spacetimes.
	\newblock {\em Gen. Relativity Gravitation}, 38(2):365--380, 2006.
	
	\bibitem{PW11}
	C.~Pfeifer and M.~Wohlfarth.
	\newblock Causal structure and electrodynamics on Finsler space- times.
	\newblock {\em Phys. Rev. D}, 84(044039), 2011.
	
	\bibitem{Rus15}
	N.~Russell.
	\newblock Finsler-like structures from {L}orentz-breaking classical particles.
	\newblock {\em Phys. Rev. D}, 91(4):045008, 9, 2015.
	
	\bibitem{SachsWu}
	R.~K. Sachs and H.~H. Wu.
	\newblock {\em General relativity for mathematicians}.
	\newblock Springer-Verlag, New York-Heidelberg, 1977.
	\newblock Graduate Texts in Mathematics, Vol. 48.
	
	\bibitem{San97}
	M.~S\'anchez.
	\newblock Some remarks on causality theory and variational methods in
	{L}orenzian manifolds.
	\newblock {\em Conf. Semin. Mat. Univ. Bari}, (265):ii+12, 1997.
	
	\bibitem{Sh01}
	Z.~Shen.
	\newblock {\em Differential geometry of spray and {F}insler spaces}.
	\newblock Kluwer Academic Publishers, Dordrecht, 2001.
	
	
	\bibitem{Vacaru}
	S.~I. Vacaru.
	\newblock Principles of {E}instein-{F}insler gravity and perspectives in modern
	cosmology.
	\newblock {\em Internat. J. Modern Phys. D}, 21(9):1250072, 40, 2012.
	
	\bibitem{Voicu17}
	N.~Voicu.
	\newblock Volume forms for time orientable {F}insler spacetimes.
	\newblock {\em J. Geom. Phys.}, 112:85--94, 2017.
	
	\bibitem{Voicu18}
	N.~Voicu.
	\newblock Conformal maps between pseudo-{F}insler spaces.
	\newblock {\em Int. J. Geom. Methods Mod. Phys.}, 15(1):1850003, 17, 2018.
	
	\bibitem{Warner65}
	F.~W. Warner.
	\newblock The conjugate locus of a {R}iemannian manifold.
	\newblock {\em Amer. J. Math.}, 87:575--604, 1965.
	
	\bibitem{whitehead}
	J.~H.~C. {Whitehead}.
	\newblock {Convex regions in the geometry of paths--addendum}.
	\newblock {\em The Quarterly Journal of Mathematics}, 4:226--227, 1933.
	
\end{thebibliography}
\end{document}